\documentclass[11pt,letter]{amsart}
\usepackage{amsfonts,amssymb,amsthm,amsmath,amsxtra,amscd,verbatim,eucal}
\usepackage[all]{xy}
\usepackage[dvips]{graphics}
%\input{psfig}

%%%%%%%%%%%%%%%%%%%%%%%%%%%%%%%%%%%%%%%%%%%%%%%%%%%%%%%%%%%%%%%%%%%%%%%%

%\setlength{\parindent}{.4 in}
\setlength{\textwidth}{5.9 in}
\setlength{\topmargin} {-.3 in}        % when at home
\setlength{\evensidemargin}{.3 in}
\setlength{\oddsidemargin}{.3 in}
\setlength{\footskip}{.3 in}
\setlength{\headheight}{.3 in}
\setlength{\textheight}{8.6 in}

\hfuzz50pc  % Don't bother to report overfull boxes if overage is < 50pc
\vfuzz50pc
\sloppy

\setcounter{section}{0}

%\pagestyle{plain}

%%%%%%%%%%%%%%%%%%%%%%%%%%%%%%%%%%%%%%%%%%%%%%%%%%%%%%%%%%%%%%%%%%%%%%%%

\theoremstyle{plain}
\newtheorem{thm}{Theorem}[section]
\newtheorem{lem}[thm]{Lemma}
\newtheorem{prop}[thm]{Proposition}
\newtheorem{cor}[thm]{Corollary}

\theoremstyle{definition}
\newtheorem{defi}[thm]{Definition}

\theoremstyle{remark}
\newtheorem{eg}[thm]{Example}

\newtheorem{rmk}[thm]{Remark}

%\numberwithin{equation}{section}

%%%%%%%%%%%%%%%%%%%%%%%%%%%%%%%%%%%%%%%%%%%%%%%%%%%%%%%%%%%%%%%%%%

\def\Z{{\mathbb Z}}

\def\C{{\mathbb C}}

\def\R{{\mathbb R}}
\def\Q{{\mathbb Q}}

\def\K{\mathcal{K}}
\def\O{\mathcal{O}}
\def\LL{\mathcal{L}}

\def\W{\mathcal{W}}
\def\J{\mathcal{J}}
\def\I{\mathcal{I}}

\def\a{\mathfrak{a}}

\def\d{\delta}
\def\f{\phi}
\def\ff{\psi}

\def\ep{\epsilon}

\def\l{\lambda}
\def\n{\nu}
\def\m{\mu}

\def\D{\Delta}

\def\o{\circ}

\def\rat{\dashrightarrow}

\def\lrd{\lfloor}
\def\rrd{\rfloor}
\def\lru{\lceil}
\def\rru{\rceil}

\def\.{\cdot}
\def\({\Big{(}}
\def\){\Big{)}}
\def\^{\widehat}
\def\~{\widetilde}

\renewcommand{\and}{ \quad \text{and} \quad }
\renewcommand{\for}{ \quad \text{for} \ \, }
\newcommand{\fall}{ \quad \text{for all} \ \, }

\DeclareMathOperator{\val} {val}

\DeclareMathOperator{\adj} {adj}
\DeclareMathOperator{\Ex} {Ex}

\DeclareMathOperator{\ord} {ord}

\DeclareMathOperator{\Div} {Div}
\DeclareMathOperator{\divisor} {div}

\DeclareMathOperator{\Supp} {Supp}

\DeclareMathOperator{\lc} {lc}

%%%%%%%%%%%%%%%%%%%%%%%%%%%%%%%%%%%%%%%%%%%%%%%%%%%%%%%%%%%%%%%%%%%%%%%%

\title{Singularities on normal varieties}

%\dedicatory{Preliminary version -- May 13, 2008 (compiled on \today)}

\author{Tommaso de Fernex}
\address{Department of Mathematics, University of Utah, 155 South 1400 East,
Salt Lake City, UT 48112-0090, USA}
\email{defernex@math.utah.edu}

\author{Christopher D. Hacon}
\address{Department of Mathematics, University of Utah, 155 South 1400 East,
Salt Lake City, UT 48112-0090, USA}
\email{hacon@math.utah.edu}

\thanks{The first author was partially
supported by NSF research grant no: 0548325.
The second author was partially supported by NSF research grant no: 0456363
and an AMS Centennial fellowship.}

\subjclass[2000]{Primary 14B05; Secondary 14J17, 14E15}
\keywords{Divisorial valuation, relative canonical divisor, singularities
of pairs, multiplier ideals.}

\begin{document}

\begin{abstract}
In this paper we generalize the definitions of singularities of pairs
and multiplier ideal sheaves to pairs on arbitrary normal varieties,
without any assumption on the variety being $\Q$-Gorenstein or the
pair being log $\Q$-Gorenstein.
The main features of the theory extend to this setting in a natural way.
\end{abstract}

\maketitle

\section{Introduction}

The theory of
singularities of pairs and multiplier ideal sheaves has become a core part
of the study of higher dimensional algebraic varieties
(e.g., see \cite{Kol,KM,Laz,EM2} for an overview
of the theory and various applications). In fact,
pairs naturally arise in a geometrically meaningful way in a variety
of instances: as boundaries of open varieties,
markings on varieties in moduli problems, discriminants
and orbifold structures of morphisms, base schemes of rational maps,
and inductive tools in higher dimensional geometry.

The main purpose of this paper is to investigate possible extensions of
the theory to settings which are more general than the ones in which
it has been introduced and studied.
Our priority, naturally, is to perform this generalization in such a way
that the essential features are preserved.

Given a $\Q$-Gorenstein variety $X$, several invariants
have been defined via resolution of singularities.
A key ingredient in their definition is the
relative canonical divisor of a resolution $f\colon Y \to X$,
that is, the exceptional $\Q$-divisor $K_{Y/X} := K_Y - f^*K_X$
(here we fix $K_Y$ so that $f_*K_Y = K_X$).
The difficulty in extending the definitions of such invariants
to arbitrary normal varieties arises as soon as $K_X$ is not $\Q$-Cartier,
as it is unclear in this case what should be its pullback.
One way around the problem is to perturb $K_X$
by adding a {\it boundary}, that is,
and effective $\Q$-divisor $\D$ such that $K_X + \D$ is $\Q$-Cartier.
This also gives rise to a pair $(X,\D)$, but the boundary
itself may have no particular geometric meaning, and
it is not clear a priori that there exists a natural choice for $\D$.

Our approach to the problem is different and more direct.
We introduce a notion of pullback of (Weil) $\Q$-divisors
which agrees with the usual one for $\Q$-Cartier $\Q$-divisors.
In this way we are able to define {\it relative canonical divisors}
$K_{Y/X} := K_Y + f^*(-K_X)$ and $K_{Y/X}^- := K_Y - f^*K_X$
for any proper birational morphism $f \colon Y \to X$ of normal varieties.
These are exceptional $\R$-divisors that coincide when $K_X$ is $\Q$-Cartier,
but may be different otherwise.
We also define a suitable approximation of $K_{Y/X}^-$
via $\Q$-divisors $K_{m,Y/X}$ (for $m \ge 1$), that we call
{\it limiting relative canonical divisors}.
Using these notions, we generalize the definitions of multiplier
ideals and singularities of pairs to pairs of the form
$(X,Z)$, where $X$ is an arbitrary normal variety and $Z = \sum b_k\.Z_k$ is
an effective formal linear combination of proper closed subschemes of $X$.

The {\it multiplier ideal sheaf} $\J(X,Z)$ of $(X,Z)$ is defined,
in our generality,
as the unique maximal element in the collection of ideal sheaves
$$
\big\{ (f_m)_*\O_{Y_m}(\lru K_{m,Y_m/X} - f_m^{-1}(Z)\rru) \big\}_{m \ge 1},
$$
where for every $m$ the morphism $f_m \colon Y_m \to X$ is a
`high enough' log resolution of $(X,Z)$ depending on $m$.
The core result of the paper is that $\J(X,Z)$ can be
realized as the multiplier ideal sheaf of a suitable log $\Q$-Gorenstein pair.

\begin{thm}
For any pair $(X,Z)$ as above, there is a boundary $\D$ on $X$ such that
$$
\J(X,Z) = \J((X,\D);Z).
$$
\end{thm}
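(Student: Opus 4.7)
The plan is to localize on $X$, reduce the maximum in the definition of $\J(X,Z)$ to a single value of $m$, and then extract the desired boundary $\D$ from a general element of $|-m K_X|$. Since the statement is local on $X$, I would first pass to an affine open; there, for every sufficiently divisible $m$ the linear system $|-mK_X|$ is nonempty, and for any effective $\D_m \sim -m K_X$ the divisor $K_X + \tfrac{1}{m}\D_m$ is $\Q$-linearly trivial (hence $\Q$-Cartier), so $(X, \tfrac{1}{m}\D_m)$ is a log $\Q$-Gorenstein pair.

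The first key step is to show that the defining family
\begin{equation*}
\big\{(f_m)_*\O_{Y_m}\(\lru K_{m, Y_m/X} - f_m^{-1}(Z) \rru\)\big\}_{m \ge 1}
\end{equation*}
is directed under divisibility, so that for $m \mid m'$ the ideal indexed by $m'$ contains the one indexed by $m$. Combined with the Noetherianity of $\O_X$, this forces the maximum to be attained at some $m_0 \ge 1$, and the theorem reduces to producing a boundary $\D$ such that $\J((X,\D); Z)$ equals the ideal at level $m_0$.

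For this final step, I would pick $\D_{m_0} \in |-m_0 K_X|$ sufficiently general and set $\D := \tfrac{1}{m_0}\D_{m_0}$. On a log resolution $f \colon Y \to X$ of $(X, \D, Z)$ dominating $f_{m_0}$, the identity
\begin{equation*}
K_Y - f^*(K_X + \D) \;=\; K_{m_0, Y/X} - \tfrac{1}{m_0}\, f^{-1}_* \D_{m_0}
\end{equation*}
should hold: the exceptional part of the left-hand side is exactly $K_{m_0, Y/X}$, while the non-exceptional part reduces to $-\tfrac{1}{m_0}f^{-1}_*\D_{m_0}$. For general $\D_{m_0}$, the strict transform $f^{-1}_*\D_{m_0}$ is reduced, meets $f^{-1}(Z)$ in codimension $\ge 2$, and is disjoint from the exceptional support of $K_{m_0, Y/X}$, so its coefficient $-\tfrac{1}{m_0} \in (-1,0)$ rounds up to zero. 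Pushing forward by $f$ then yields $\J((X,\D); Z) = \J(X,Z)$.

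The main obstacle is the directedness and stabilization of the first step: one has to unpack the definition of $K_{m, Y/X}$ closely enough to establish monotonicity along divisibility (after passing to a common resolution), so that the resulting ideals form an ascending chain and Noetherianity delivers the stabilizing index $m_0$. The Bertini-style genericity argument in the last step is a secondary technical matter, handled by a suitably refined common log resolution.
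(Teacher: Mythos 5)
Your strategy tracks the paper's closely: first show $\{\J_m(X,Z)\}_m$ is a directed, ascending family (via the monotonicity $K_{m,Y_m/X}\le K_{mq,Y_m/X}$ coming from Lemma~\ref{lem:inf=limsup=lim}) so that Noetherianity gives a stabilizing index $m_0$, then produce a boundary $\D$ on which the log $\Q$-Gorenstein multiplier ideal reproduces $\J_{m_0}(X,Z)$. The identity you want,
$$
K_Y-f^*(K_X+\D)\;=\;K_{m_0,Y/X}-\tfrac1{m_0}f^{-1}_*\D_{m_0},
$$
is exactly the statement that $\D$ is an ``$m_0$-compatible boundary'' in the paper's sense, i.e., $K_{Y/X}^\D=K_{m_0,Y/X}$ together with $\lrd\D\rrd=0$ and transversality with $Z$ and the exceptional locus. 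Your Bertini argument for that identity is in fact sound: for a general section $s$ of the (locally generated) sheaf $\O_X(-m_0K_X)$ one has $\val_E(f^*s)=\val_E(\O_X(-m_0K_X)\.\O_Y)$ for all exceptional $E$ on a fixed log resolution, which is precisely what forces the exceptional part of $f^\natural(-\D_{m_0})$ to vanish and leaves only $-f^{-1}_*\D_{m_0}$.

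The gap is the reduction to an affine open. The statement is \emph{not} local on $X$: a boundary $\D$ is a globally defined effective $\Q$-divisor with $K_X+\D$ $\Q$-Cartier, and the boundaries your argument produces on an affine cover need not glue (there is no way to patch $\D_i$ on $U_i$ into a single $\D$ with $K_X+\D$ $\Q$-Cartier). The obstruction is genuine: on a quasi-projective $X$, the linear system $|-m_0K_X|$ can easily be empty, so your construction of $\D_{m_0}$ has nothing to work with globally. The paper's proof of Theorem~\ref{thm:existence-of-compatible-boundary} resolves this by twisting with an ample sheaf: choose an effective divisor $D$ with $K_X-D$ Cartier, pick an invertible $\LL$ with $\LL\otimes\O_X(-m_0D)$ globally generated, take $G$ general in the sub-linear system $\{L\in|\LL|\mid L-m_0D\ge 0\}$, write $G=M+m_0D$, and set $\D=\tfrac1{m_0}M$. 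This makes $K_X+\D=(K_X-D)+\tfrac1{m_0}G$ visibly $\Q$-Cartier globally, and the computation you sketched then goes through verbatim (with $f^*G=M_Y+E$ replacing your $\divisor(f^*s)$ decomposition). Once you incorporate this globalization step, your argument reproduces the paper's proof.
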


In particular, we deduce the surprising fact that the set of ideal sheaves
$$
\{\J((X,\D);Z) \mid \text{$\D$ is a boundary on $X$} \}
$$
has a unique maximal element, namely $\J(X,Z)$. A posteriori, one can take this
maximal element as the definition of $\J(X,Z)$.
Using this result, all the main properties related to multiplier ideals,
such as vanishing theorems, connectedness properties,
and basic inversion of adjunction statements, extend immediately to
the general setting.

In order to generalize the notions of
{\it log terminal} and {\it log canonical singularities},
we impose log discrepancy conditions with respect to the
limiting relative canonical divisors $K_{m,Y/X}$.
In a similar vein, we have the following result.

\begin{thm}
A pair $(X,Z)$ is log terminal (resp., log canonical)
if and only if there is a boundary $\D$ on $X$ such that
$((X,\D);Z)$ is log terminal (resp., log canonical).
\end{thm}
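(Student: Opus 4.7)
The ``if'' direction is the easier one. Given a boundary $\D$ with $((X,\D);Z)$ log terminal (resp., log canonical), I would choose $m$ divisible by the Cartier index of $K_X+\D$ and pass to a sufficiently high log resolution $f\colon Y\to X$ of $(X,\D+Z)$. The construction of the limiting relative canonical divisor should give the pointwise inequality
\[
K_{m,Y/X} \;\ge\; K_Y - f^*(K_X+\D) + f^*\D \qquad \text{on } Y,
\]
which simply expresses that $K_{m,Y/X}$ is an upper envelope among relative canonical divisors coming from $\Q$-Gorenstein perturbations of $K_X$. Comparing coefficients at each prime $E$ on $Y$, the log discrepancies measured by $K_{m,Y/X} - f^{-1}(Z)$ dominate those measured by $K_Y - f^*(K_X+\D) - f^{-1}(Z)$: the correction $\mathrm{coeff}_E(f^*\D)$ is non-negative on exceptional primes and, on the strict transform of a prime divisor of $X$, exactly cancels the boundary contribution from $\D$. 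Thus log terminality (log canonicity) of $((X,\D);Z)$ transfers to the corresponding property of $(X,Z)$ in the sense of the paper.

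The ``only if'' direction is the heart of the theorem and mirrors the proof of the preceding theorem on multiplier ideals. Assume $(X,Z)$ is log terminal. I would fix an $m$ and a log resolution $f\colon Y\to X$ of $(X,Z)$ on which $K_{m,Y/X} - f^{-1}(Z)$ has all coefficients strictly greater than $-1$; since only finitely many primes appear on $Y$, there is some $\epsilon>0$ bounding these coefficients uniformly away from $-1$. I would then invoke (or parallel) the boundary construction from the proof of the multiplier ideal theorem to produce a boundary $\D$ with $K_X+\D$ being $\Q$-Cartier (of index dividing $m$) and with
\[
K_Y - f^*(K_X+\D) + f^*\D \;\ge\; K_{m,Y/X} - \epsilon\, F
\]
for some effective divisor $F$ on $Y$ and any prescribed $\epsilon$. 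Choosing $\epsilon$ smaller than the gap above and running the coefficient comparison of the ``if'' direction in reverse, every log discrepancy of $((X,\D);Z)$ at a prime of $Y$ remains $>-1$. Since $K_X+\D$ is $\Q$-Cartier, discrepancies at divisors appearing on $Y$ control discrepancies at all divisorial valuations over $X$ (standard negativity for log $\Q$-Gorenstein pairs), giving log terminality of $((X,\D);Z)$. The log canonical case follows by either a direct variant with non-strict inequalities, or by first perturbing $Z$ to reduce to the log terminal case and taking a limit of the resulting boundaries.

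\textbf{Main obstacle.} The technical core is the construction of $\D$ in the second step: the boundary must simultaneously make $K_X+\D$ be $\Q$-Cartier with controlled index, make $f^*(K_X+\D)$ approximate the limiting pullback of $K_X$ on the specific resolution $Y$ to within $\epsilon$, and not itself introduce components with coefficients on $Y$ that would destroy log terminality. This is exactly the technical core imported from the proof of the multiplier ideal theorem; once it is in hand, the inequality-chasing above closes the argument in both directions.
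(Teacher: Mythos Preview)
Your overall strategy matches the paper's, but you have overcomplicated the ``only if'' direction in a way that creates a real gap in the log canonical case. The boundary construction you import from the multiplier ideal theorem is exactly the paper's notion of an \emph{$m$-compatible boundary}, and its defining property is the \emph{exact} equality $K_{Y/X}^\D = K_{m,Y/X}$ on a suitable log resolution---not an $\epsilon$-approximation. With this equality in hand, the $m$-th limiting log discrepancies of $(X,Z)$ coincide on the nose with the ordinary log discrepancies of $((X,\D);Z)$ at every prime on $Y$, and since $K_X+\D$ is $\Q$-Cartier, the latter control all divisorial valuations. Both the log terminal and log canonical cases then follow in one stroke.

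Your $\epsilon$-slack argument is harmless (if unnecessary) in the log terminal case, but your proposed treatment of the log canonical case---perturbing $Z$ and taking a limit of the resulting boundaries---is not a proof: a limit of boundaries need not exist as a $\Q$-divisor, and even if it did you would have to argue that log canonicity survives the limit. The paper avoids this entirely via the exact equality. A minor point on the ``if'' direction: the clean inequality the paper uses is simply $K_{Y/X}^\D \le K_{m,Y/X}$ whenever $m(K_X+\D)$ is Cartier; your displayed inequality with the extra $+f^*\D$ term is stronger than needed and not obviously correct as stated, though your intended conclusion is right.
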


We immediately deduce, for instance, that as
in the $\Q$-Gorenstein case a normal variety with
log terminal singularities (resp., with Cohen--Macaulay log
canonical singularities) has rational singularities
(resp., Du Bois singularities).
Kawamata's subadjunction theorem is also generalized to our context.
In fact, we observe that minimal log canonical centers
(which in general are not known to be $\Q$-Gorenstein)
are log terminal; this provides in particular a natural setting
for the theory developed in this paper.
Finally, we check that in dimension two our notions of log terminal
and log canonical singularities agree with those of numerically log terminal
and numerically log canonical singularities,
which in particular implies that they are always $\Q$-Gorenstein.

By contrast, our definition of {\it terminal} and {\it canonical singularities} uses
log discrepancy conditions with respect to the
relative canonical divisor $K_{Y/X}$.
When $Z$ is a $\Q$-Cartier $\Q$-divisor, we extend to this setting the
following characterization of canonical singularities.

\begin{prop}
If $Z$ is a $\Q$-Cartier $\Q$-divisor, then $(X,Z)$ is canonical if and only
if for any sufficiently divisible $m \ge 1$ and
for every sufficiently high log resolution $f \colon Y \to X$
there is an inclusion $\O_X(m(K_X + Z))\.\O_Y \subseteq \O_Y(m(K_Y + Z_Y))$
as sub-$\O_X$-modules of the constant sheaf of rational functions,
where $Z_Y$ is the proper transform of $Z$.
\end{prop}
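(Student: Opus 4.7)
The plan is to translate the claimed sheaf inclusion into a coefficient-wise inequality of divisors on $Y$ via the limiting relative canonical divisors $K_{m,Y/X}$, and then pass to the limit in $m$ and to sufficiently high log resolutions to match the condition against the defining property of canonicity of $(X,Z)$.

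Since $Z$ is $\Q$-Cartier, for $m\ge 1$ sufficiently divisible $mZ$ is Cartier and the reflexive sheaves satisfy $\O_X(m(K_X+Z)) \cong \O_X(mK_X) \otimes \O_X(mZ)$; applying $\,\cdot\,\O_Y$ and using $\O_X(mZ)\.\O_Y = \O_Y(mf^*Z)$, the claimed inclusion is equivalent to
\[
\O_X(mK_X)\.\O_Y \ \subseteq\ \O_Y\bigl(m(K_Y + Z_Y - f^*Z)\bigr).
\]
At each prime divisor $E\subset Y$ this amounts to $\alpha_E(m) + m\,\ord_E(K_Y + Z_Y - f^*Z) \ge 0$, where $\alpha_E(m) := \min\{\ord_E(s) : s \in \O_X(mK_X)_{f(E)}\}$. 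Non-exceptional primes satisfy it automatically from $\divisor_X(s) + mK_X \ge 0$. For exceptional $E$, $\ord_E(Z_Y) = 0$, and by the construction of the limiting canonical divisors the quantity $\alpha_E(m)/m$ is exactly the coefficient of $E$ in $K_{m,Y/X} - K_Y$; the condition at $E$ therefore reads $\ord_E(K_{m,Y/X}) \ge \ord_E(f^*Z)$.

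The inclusions $\O_X(mK_X)\.\O_X(nK_X) \subseteq \O_X((m+n)K_X)$ give $\alpha_E(m+n)\le\alpha_E(m)+\alpha_E(n)$, so Fekete's subadditive lemma yields $\alpha_E(m)/m \searrow \inf_m \alpha_E(m)/m$ through divisibility; hence $K_{m,Y/X}$ decreases coefficient-wise on each exceptional $E$ to its limiting divisor, which by the paper's formalism for the pullback of Weil $\Q$-divisors is the relative canonical divisor $K_{Y/X}$ governing the canonicity condition. In particular $K_{m,Y/X} \ge K_{Y/X}$ for every $m$.

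Combining the two steps: if $(X,Z)$ is canonical then $K_{Y/X} \ge f^*Z$ along every divisor over $X$, so a fortiori $K_{m,Y/X} \ge K_{Y/X} \ge f^*Z$, and the inclusion holds for every sufficiently divisible $m$ on every log resolution. Conversely, the inclusion for all sufficiently divisible $m$ along each sufficiently high log resolution produces $\ord_E(K_{m,Y/X}) \ge \ord_E(f^*Z)$ for every divisor $E$ over $X$ and all large $m$; taking the limit gives $\ord_E(K_{Y/X}) \ge \ord_E(f^*Z)$ for every such $E$, i.e., $(X,Z)$ is canonical. The main technical difficulty is the identification of $\inf_m \alpha_E(m)/m$ with the coefficient of $E$ in $K_{Y/X}-K_Y$: because the new pullback of Weil divisors is in general nonlinear (so one must carefully distinguish $f^*(-K_X)$ from $-f^*K_X$), this step must be handled using the specific framework of limiting canonical divisors developed earlier in the paper, which is where the bulk of the work lies.
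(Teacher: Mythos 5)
Your overall strategy (reduce the sheaf inclusion to a divisorial inequality at each prime divisor of $Y$, track the subadditive sequence $\alpha_E(m)=\val_E(\O_X(mK_X))$, pass to the limit through divisibility) is essentially the same as the paper's proof, and the final reduction is correct. However, there is a genuine conceptual error in the middle that is worth correcting precisely because it touches the subtle point that motivates the paper's setup.

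You identify $\alpha_E(m)/m$ with the coefficient of $E$ in $K_{m,Y/X}-K_Y$ and then assert $K_{m,Y/X}\ge K_{Y/X}$. Both claims are wrong. Unwinding the definitions, $\alpha_E(m)=\val_E(\O_X(mK_X))=\val_E^\natural(-mK_X)=\ord_E\big(f^\natural(-mK_X)\big)$, so $\alpha_E(m)/m$ is the coefficient of $E$ in $K_Y+\tfrac1m f^\natural(-mK_X)-K_Y$, \emph{not} in $K_{m,Y/X}-K_Y=-\tfrac1m f^\natural(mK_X)$. These differ in general, because $f^\natural$ is not linear: Remark~\ref{rmk:comparison-natural-natural} only gives $f^\natural(-D)\ge -f^\natural(D)$, and Example~\ref{eg:non-additivity:natural-val} shows the inequality can be strict. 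Consequently the sequence you track decreases through divisibility to $\ord_E\big(f^*(-K_X)\big)=\ord_E(K_{Y/X})-\ord_E(K_Y)$ and dominates it from above, whereas the paper's $K_{m,Y/X}$ \emph{increases} through divisibility to $K_{Y/X}^-=K_Y-f^*K_X$, which satisfies $K_{m,Y/X}\le K_{Y/X}^-\le K_{Y/X}$ (Remark~\ref{rmk:K-}). Your inequality $K_{m,Y/X}\ge K_{Y/X}$ is the opposite of the true one; it is the unnamed divisor $K_Y+\tfrac1m f^\natural(-mK_X)$ that satisfies it. If you replace $K_{m,Y/X}$ by this divisor throughout, your argument becomes correct and is in fact the same computation as the paper's, which works with $f^\natural(-m(K_X+Z))$ directly and compares it to $f^*(-m(K_X+Z))$ using Remark~\ref{rmk:comparison-natural-natural} and Proposition~\ref{prop:val-pullback:additivity:special-case}. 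One more small gap: in the converse direction, the paper is careful to further blow up so that $f$ is a log resolution of $(X,Z+\O_X(mK_X))$ for the chosen $m$, ensuring $\O_X(m(K_X+Z))\cdot\O_Y$ is invertible before the codimension-one check; you should state this explicitly.
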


Using this property, the main features of canonical singularities, such as
the deformation invariance properties of
plurigenera (for singular varieties of general type),
of canonical singularities and of numerical Kodaira dimension
easily extend to the more general setting.

We expect that the larger freedom in defining these notions of singularities
should have interesting applications. In the (log) $\Q$-Gorenstein setting
many applications rely on multiplier ideals and
their vanishing theorems, and it is encouraging that these powerful
methods extend to our setting.

The original motivation of this research comes from
a question posed by Valery Alexeev during the AIM Workshop \cite{AIM06},
which asks whether it is possible to
generalize the definitions of singularities of pairs in a wider context than
the usual one.
The question itself was motivated by an example, due to Paul Hacking, of
a flat family of pairs $(S_t,D_t)$, where $S_t$ is a smooth surface
and $D_t$ is an effective divisor, that specializes to
a pair $(S_0,D_0)$, where $S_0$ is a singular surface and
the ideal sheaf of $D_0$
acquires an embedded prime at the singularity of $S_0$.

The example brings to light an important issue: namely
that often, in the literature, pairs $(X,Z)$ have
been intended in a combined way, both geometrically (as in one of the
situations previously described) and as a correction to the possible failure
of $K_X$ being $\Q$-Cartier---by incorporating
a boundary $\D$ into $Z$, so to speak.
We insist in this paper to keep the two things separated.

The question of defining multiplier ideals in the generality
treated in this paper arises naturally also in connection with
the {\it generalized test ideal} introduced by Hara and Yoshida
\cite{HY} (see also \cite{HT})
using the Frobenius action in positive characteristics, as the latter
can be defined without any (log) $\Q$-Gorenstein assumption.
In the (log) $\Q$-Gorenstein setting,
multiplier ideals reduce, for sufficiently large characteristics,
to the corresponding generalized test ideals
(see \cite{Smi00,Har01,HY,Tak04}).
It follows by independent results of Hara and Blickle (see \cite{Bli})
that, in the toric setting,
the same happens without any (log) $\Q$-Gorenstein assumption
for the multiplier ideals defined in this paper. It would be interesting to
see if this property holds in general; this question was raised by Hara.

In the first two sections of the paper we work over an arbitrary field;
starting from Section~\ref{sect:multiplier-ideals}
we will restrict the setting to varieties over an
algebraically closed field of characteristic zero.
A {\it divisor} on a normal variety $X$ will be understood to be a Weil divisor,
unless otherwise specified.

\subsection{Acknowledgments}
The authors would like to thank Lawrence Ein
and Karen Smith for useful conversations,
and Manuel Blickle, Nobuo Hara, Karl Schwede and Shunsuke Takagi
for useful comments.
The authors are very grateful to the referee for
many valuable suggestions, comments and corrections.

\section{Valuations of $\Q$-divisors}

Let $X$ be a normal variety.
A {\it divisorial valuation} $v$ on $X$ is a discrete valuation
of the function field of $X$ of the form $v = q\val_F$
where $q \in \Z_+$ and $F$ is a prime divisor {\it over} $X$,
that is, on a normal variety $X'$ with a given birational morphism
$\m \colon X' \to X$.

Throughout this section, we fix a divisorial
valuation $v$ of $X$.
If $D$ is a Cartier divisor on $X$, then the
valuation $v(D)$ of $D$ is given by
$q$ times the coefficient of $F$ in the divisor $\m^*D$.
The valuation $v(Z)$ of a proper closed
subscheme $Z \subset X$ is given by
$$
v(Z) = v(\I_Z) :=
\min \{ v(\f) \mid \f \in \I_Z(U), \; U \cap c_X(v) \ne \emptyset \},
$$
where $\I_Z \subseteq \O_X$ is the ideal sheaf of $Z$.
This definition extends to formal $\R$-linear combinations
$\sum a_k\.Z_k$ of proper closed subschemes $Z_k \subset X$
by setting $v(\sum a_k\.Z_k) := \sum a_k\.v(Z_k)$.

More generally, let $\I \subset \K$ be a
finitely generated sub-$\O_X$-module of the
constant sheaf of rational functions $\K= \K_X$ on $X$.
For short, we will refer to $\I$ as a {\it (coherent) fractional
ideal sheaf} on $X$.

\begin{defi}
The {\it valuation} $v(\I)$
of a non-zero fractional ideal sheaf $\I \subset \K$ along $v$ is given by
$$
v(\I) :=
\min \{ v(\f) \mid \f \in \I(U), \; U \cap c_X(v) \ne \emptyset \}.
$$
The {\it valuation} $v(I)$
of a formal linear combination $I = \sum a_k \.\I_k$ of
fractional ideal sheaves $\I_k \subset \K$ along $v$ is defined by
$v(I) := \sum a_k \. v(\I_k)$.
\end{defi}

If $\I$ and $\I'$ are fractional ideal sheaves on $X$
with $\I \subseteq \I'$, then $v(\I ) \ge v(\I' )$.
In the case of ideal sheaves, this definition
coincides with the one previously given, and
if $D$ is a Cartier divisor, then $v(D) = v(\O_X(-D))$.

Consider now an arbitrary divisor $D$ on $X$.

\begin{defi}
The {\it $\natural$-valuation} (or {\it natural valuation})
along $v$ of a divisor $D$ on $X$ is
$$
v^\natural(D) := v(\O_X(-D)).
$$
\end{defi}

Clearly, we have $v^\natural(D) = v(D)$ for any Cartier divisor $D$.
Note also that, as $\O_X(D)\.\O_X(-D) \subseteq \O_X$,
we have that $v^\natural(-D) + v^\natural(D) \ge 0$.

In general the $\natural$-valuation of divisors is not
linear with respect to the group structure of $\Div(X)$,
as the next example shows.

\begin{eg}\label{eg:non-additivity:natural-val}
Let $X = \{ xy = z^2 \} \subset \C^3$, and let $v = \val_E$, where $E$
is the exceptional divisor of the blow up of $X$ at the origin. Then, for
any two lines $L,M \subset X$ (passing through the origin), we have
$v^\natural(L) = v^\natural(M) = v^\natural(L + M) = 1$,
and thus $v^\natural(L+M) \ne v^\natural(L) + v^\natural(M)$.
In particular, $v^\natural(2L) = v^\natural(L)$.
Note also that $v^\natural(-L) = 0$.
\end{eg}

\begin{lem}\label{lem:natural-val:additivity:special-case}
Let $C$ be a Cartier divisor on $X$. Then
$v^\natural(C+ D) = v(C) + v^\natural(D)$
for every divisor $D$ on $X$.
\end{lem}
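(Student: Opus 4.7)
The plan is to translate the additive statement about $\natural$-valuations into a multiplicative identity of fractional ideal sheaves, exploiting the fact that $\O_X(-C)$ is locally principal (since $C$ is Cartier). Concretely, I would prove the identity
\[
\O_X(-(C+D)) = \O_X(-C) \. \O_X(-D)
\]
as sub-$\O_X$-modules of $\K_X$, and then check that the valuation $v$ converts this product into the sum $v(C) + v^\natural(D)$.

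For the first step, I work locally on an open set $U \subset X$ over which $C$ is defined by a single rational function $t$, so that $\O_X(-C)|_U = t \. \O_U$. For any rational function $\f$ on $U$, the condition $\f \in \O_X(-(C+D))(U)$ is equivalent to $\divisor(\f) \ge C + D$ on $U$, which (after dividing by $t$) is equivalent to $\divisor(\f/t) \ge D$, i.e., $\f/t \in \O_X(-D)(U)$. Thus $\O_X(-(C+D))|_U = t \. \O_X(-D)|_U = \big(\O_X(-C) \. \O_X(-D)\big)|_U$. Gluing over a cover trivializing $C$ yields the claimed identity.

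For the second step, I pick an open $U$ meeting $c_X(v)$ on which $C$ has a local equation $t$. Every section $\f \in \big(\O_X(-C) \. \O_X(-D)\big)(U)$ can then be written uniquely as $\f = t \ff$ with $\ff \in \O_X(-D)(U)$. Since $v$ is a valuation on $\K_X$, we have $v(\f) = v(t) + v(\ff) = v(C) + v(\ff)$, and conversely every $\ff \in \O_X(-D)(U)$ arises in this way. Taking the minimum over such sections (and shrinking $U$ as needed so that it still meets $c_X(v)$) gives
\[
v\big(\O_X(-C) \. \O_X(-D)\big) = v(C) + v\big(\O_X(-D)\big) = v(C) + v^\natural(D).
\]
Combined with Step 1 and the definition $v^\natural(C+D) = v(\O_X(-(C+D)))$, this is exactly the desired equality.

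The only point requiring any care is the product identity of Step 1: it is important that $C$ be Cartier, since otherwise $\O_X(-C)$ need not be locally principal and the local multiplication by $t$ no longer realizes the inclusion $\O_X(-(C+D)) \subseteq \O_X(-C) \. \O_X(-D)$ as an equality (indeed, the failure of additivity in Example~\ref{eg:non-additivity:natural-val} shows that some hypothesis on $C$ is essential). Everything else is a routine computation with valuations.
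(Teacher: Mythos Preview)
Your proposal is correct and follows essentially the same route as the paper's proof: the paper simply observes that since $\O_X(-C)$ is locally generated by one rational function one has $\O_X(-C-D)=\O_X(-C)\.\O_X(-D)$, and states that the assertion follows. You have written out in full the two steps that the paper leaves implicit, but the underlying argument is identical.
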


\begin{proof}
Since $\O_X(-C)$ is locally generated by one rational function,
one can check that $\O_X(-C-D)=\O_X(-C)\.\O_X(-D)$,
and the assertion follows.
\end{proof}

\begin{defi}
To any non-trivial fractional ideal sheaf $\I$ on $X$,
we associate the divisor
$$
\divisor(\I) := \sum_{E\subset X} \val_E(\I)\.E,
$$
where the sum is taken over all prime divisors $E$ on $X$.
Equivalently, $\divisor(\I)$ is the divisor on $X$ for which
$\O_X(-\divisor(\I)) = \I^{\vee\vee}$.
In particular, $\divisor(\O_X(-D)) = D$ for any divisor $D$.
We call $\divisor(\I)$ the {\it divisorial part} of $\I$.
\end{defi}

Consider now a birational morphism $f\colon Y \to X$ from
a normal variety $Y$.

\begin{defi}
For any divisor $D$ on $X$, the {\it $\natural$-pullback}
(or {\it natural pullback}) of $D$ to $Y$ is given by
$$
f^\natural D := \divisor(\O_X(-D)\.\O_Y).
$$
In other words, $f^\natural D = \sum \val_E^\natural(D)\.E$,
where the sum is taken over all prime divisors $E$ on $Y$.
In particular, $\O_Y(-f^\natural D) = (\O_X(-D)\.\O_Y)^{\vee\vee}$.

\end{defi}

\begin{lem}\label{prop:composition-natural-pullback}
Let $f\colon Y \to X$ and $g\colon V \to Y$
be two birational morphisms of normal varieties.
Then, for every divisor $D$ on $X$, the divisor
$(fg)^\natural D - g^\natural\big(f^\natural D \big)$
is effective and $g$-exceptional.
Moreover, if $\O_X(-D)\.\O_Y$ is an invertible sheaf, then
$(fg)^\natural D = g^\natural\big(f^\natural D \big)$.
\end{lem}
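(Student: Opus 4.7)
The plan is to trace both sides through the fractional ideal sheaves that define them and then exploit the universal property of the reflexive hull. By definition, $\O_Y(-f^\natural D) = (\O_X(-D)\.\O_Y)^{\vee\vee}$, which in particular contains $\O_X(-D)\.\O_Y$ as a subsheaf of $\K_Y$. Since $\O_X(-D)\.\O_V$ and $(\O_X(-D)\.\O_Y)\.\O_V$ agree as sub-$\O_V$-modules of $\K_V$, the inclusion on $Y$ yields
$$
\O_X(-D)\.\O_V \subseteq \O_Y(-f^\natural D)\.\O_V,
$$
and taking reflexive hulls on both sides gives $\O_V(-(fg)^\natural D) \subseteq \O_V(-g^\natural(f^\natural D))$. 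As an inclusion $\O_V(-A) \subseteq \O_V(-B)$ in $\K_V$ is equivalent to $A \ge B$, this already shows that $(fg)^\natural D - g^\natural(f^\natural D)$ is effective.

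To prove $g$-exceptionality, I fix a prime divisor $E$ on $V$ that is not $g$-exceptional and show that the coefficients of $E$ on the two sides coincide. Let $F$ be the closure of $g(E)$, a prime divisor on $Y$. The key valuation-theoretic point is that $\O_{Y,F} = \O_{V,E}$ as subrings of the common function field: the morphism $g$ induces a local inclusion of DVRs $\O_{Y,F} \hookrightarrow \O_{V,E}$ with identical fraction fields, so $\val_E = e\.\val_F$ for some integer $e \ge 1$; since both value groups are $\Z$, the ramification index must satisfy $e = 1$, and hence $\val_E = \val_F$.

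With this identification in hand, both coefficients can be computed directly. The coefficient of $E$ in $(fg)^\natural D$ is $\val_E(\O_X(-D)\.\O_V)$; because $\O_X(-D)\.\O_V$ is locally generated by sections of $\O_X(-D)$ and $\val_E = \val_F$, this equals $\val_F(\O_X(-D)\.\O_Y)$, which is by definition the coefficient of $F$ in $f^\natural D$. Likewise, the coefficient of $E$ in $g^\natural(f^\natural D)$ is $\val_E(\O_Y(-f^\natural D)\.\O_V) = \val_F(\O_Y(-f^\natural D))$, and the identity $\divisor(\O_Y(-A)) = A$ applied with $A = f^\natural D$ shows this is again the coefficient of $F$ in $f^\natural D$. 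The two coefficients match, so $(fg)^\natural D - g^\natural(f^\natural D)$ vanishes along every non-$g$-exceptional prime divisor.

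The final assertion is immediate from the setup: if $\O_X(-D)\.\O_Y$ is invertible, then it is already reflexive, so $\O_X(-D)\.\O_Y = \O_Y(-f^\natural D)$; multiplying by $\O_V$ gives $\O_X(-D)\.\O_V = \O_Y(-f^\natural D)\.\O_V$, and passing to reflexive hulls yields $(fg)^\natural D = g^\natural(f^\natural D)$. The only genuinely nontrivial ingredient I expect to need is the valuation-theoretic lemma that $\val_E = \val_F$ for a non-$g$-exceptional $E$; the rest is bookkeeping with reflexive hulls and the comparison of generating sets.
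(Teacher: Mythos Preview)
Your proof is correct and rests on the same key observation as the paper's: the inclusion $\O_X(-D)\.\O_Y \subseteq \O_Y(-f^\natural D)$ is an equality at every codimension-one point of $Y$, which forces the difference $(fg)^\natural D - g^\natural(f^\natural D)$ to be effective and $g$-exceptional. The only notable difference is that the paper first uses Lemma~\ref{lem:natural-val:additivity:special-case} to reduce to the case of an effective divisor $D$, whereas you work directly with fractional ideal sheaves throughout and make the codimension-one comparison explicit via the identification $\val_E = \val_F$ for non-$g$-exceptional $E$.
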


\begin{proof}
By Lemma~\ref{lem:natural-val:additivity:special-case}, we have
$$
(fg)^\natural (C+D) - g^\natural\big(f^\natural (C+D) \big)
= (fg)^\natural D - g^\natural\big(f^\natural D \big)
$$
for every Cartier divisor $C$. Therefore, after restricting to an open
quasi-projective subset and replacing $D$
with $C+D$ for some Cartier divisor $C \ge D$, we may assume
without loss of generality that $D$ is effective.
Then it suffices to observe that
$\O_X(-D)\.\O_Y \subseteq \O_Y(-f^\natural D)$,
with equality holding at the generic point of every
codimension one subvariety of $Y$. For the last assertion,
we first remark that the condition that $\O_X(-D)\.\O_Y$
is an invertible sheaf
remains unchanged if we multiply $\O_X(-D)$ by
an invertible sheaf $\O_X(-C)$,
and that $\O_X(-D)\.\O_Y = \O_Y(-f^\natural D)$
if $\O_X(-D)\.\O_Y$ is locally principal.
\end{proof}

By the homogeneity of valuations and pullbacks of Cartier divisors
it is very natural to extend this definition by setting
\begin{equation}\label{eq:val-pullback:Q-Cartier}
v(D) := \frac{v(mD)}{m}
\and f^*D := \frac{f^*(mD)}{m}
\end{equation}
for any $\Q$-Cartier $\Q$-divisor $D$ on $X$,
where $m$ is any non-zero integer such that $mD$ is a Cartier divisor.
In general, however, $\natural$-valuations and $\natural$-pullbacks of
arbitrary divisors do not enjoy a similar homogeneity property,
as pointed out in Example~\ref{eg:non-additivity:natural-val}.
In fact, the case of a line on a quadric cone
is an example of a $\Q$-Cartier divisor $D$ for which
$v^\natural(D) \ne v(D)$,
where the valuation on the right side is intended as defined above.
This problem is resolved by giving relevance
to the asymptotic nature of the definitions given
in \eqref{eq:val-pullback:Q-Cartier} for $\Q$-Cartier $\Q$-divisors.

\begin{lem}\label{lem:inf=limsup=lim}
For every divisor $D$ on $X$ and every $m \in \Z_+$, we have
$m\.v^\natural(D) \ge v^\natural(mD)$, and
$$
\inf_{k\ge 1} \frac{v^\natural(kD)}{k}
= \liminf_{k \to \infty} \frac{v^\natural(kD)}{k}
= \lim_{k \to \infty} \frac{v^\natural(k!D)}{k!} \in \R.
$$
\end{lem}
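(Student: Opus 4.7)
The plan is to first establish the subadditivity
$$
v^\natural(D_1+D_2) \le v^\natural(D_1) + v^\natural(D_2)
$$
for arbitrary divisors $D_1,D_2$ on $X$, and then derive everything from Fekete's lemma applied to the sequence $a_k := v^\natural(kD)$. For subadditivity: given local sections $\f \in \O_X(-D_1)(U_1)$ and $\psi \in \O_X(-D_2)(U_2)$ with $U_i \cap c_X(v) \ne \emptyset$, the product $\f\psi$ is a section of $\O_X(-D_1-D_2)$ on $U_1 \cap U_2$ (which still meets $c_X(v)$), and since $v$ is a valuation on the function field, $v(\f\psi) = v(\f) + v(\psi)$. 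Choosing $\f,\psi$ realizing the respective minima in the definitions of $v^\natural(D_i)$ gives the subadditivity. Specializing to $D_1 = (m-1)D$, $D_2 = D$ and iterating yields the first assertion $m\.v^\natural(D) \ge v^\natural(mD)$.

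Next, with $a_k := v^\natural(kD)$, the inequality just proved reads $a_{k+\ell} \le a_k + a_\ell$ for all $k,\ell \ge 1$. Each $a_k$ is a real number since $\O_X(-kD)$ is a non-zero coherent fractional ideal sheaf. Fekete's subadditive lemma then ensures that
$$
\lim_{k \to \infty} \frac{a_k}{k} = \inf_{k \ge 1} \frac{a_k}{k}
$$
in $\R \cup \{-\infty\}$. In particular the $\inf$ and $\liminf$ automatically agree.

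To rule out the value $-\infty$, I would invoke the elementary inequality $v^\natural(-D) + v^\natural(D) \ge 0$ recorded just after the definition of $v^\natural$ (which comes from $\O_X(D)\.\O_X(-D) \subseteq \O_X$). Applying it to $kD$ and then using subadditivity on $v^\natural(-kD)$ gives
$$
0 \;\le\; v^\natural(kD) + v^\natural(-kD) \;\le\; v^\natural(kD) + k\.v^\natural(-D),
$$
hence $a_k/k \ge -v^\natural(-D)$ for every $k \ge 1$. This lower bound is independent of $k$ and finite, so the infimum, and hence the limit, is a real number.

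Finally, since the full sequence $a_k/k$ converges in $\R$, so does every subsequence; in particular $a_{k!}/k! = v^\natural(k!D)/k!$ converges to the same value, yielding the third equality. The only nontrivial point is really the subadditivity together with the observation that $v^\natural$ is not necessarily non-negative, which is precisely why the lower bound via $v^\natural(-D)$ is needed to keep the infimum finite.
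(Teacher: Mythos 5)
Your proof is correct, and the core step coincides with the paper's: both rest on the multiplicative containment $\O_X(-D_1)\.\O_X(-D_2) \subseteq \O_X(-D_1-D_2)$, which yields subadditivity of $v^\natural$. The paper only records the special case $\O_X(-D)^m \subseteq \O_X(-mD)$ (giving $m\.v^\natural(D) \ge v^\natural(mD)$) and then asserts the two equalities, which indeed follow from this homogeneous inequality alone: the sequence $v^\natural(k!D)/k!$ is non-increasing, and for any fixed $\ell$ and $k \ge \ell$ one has $v^\natural(k!D)/k! \le v^\natural(\ell D)/\ell$. You instead invoke full Fekete subadditivity, which is a slightly stronger but equally valid packaging of the same fact. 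The real divergence is in the finiteness of the limit: the paper reduces to the quasi-projective case (by restricting to an affine neighborhood of the generic point of the center of $v$) and uses that $D$ dominates a Cartier divisor $C$, giving $v^\natural(kD) \ge k\.v(C)$; you instead use the already-recorded inequality $v^\natural(-D)+v^\natural(D)\ge 0$ together with $v^\natural(-kD) \le k\.v^\natural(-D)$ to get the uniform lower bound $v^\natural(kD)/k \ge -v^\natural(-D)$. Your lower bound is more direct and stays entirely internal to the formalism of $v^\natural$, avoiding the quasi-projective reduction; the paper's is perhaps more geometric. Both are correct. One small point worth making explicit in your subadditivity step: the intersection $U_1\cap U_2$ still meets $c_X(v)$ because $c_X(v)$ is irreducible, so two non-empty relatively open subsets of it must intersect.
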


\begin{proof}
If $\f_1,\dots,\f_m \in \O_X(-D)(U)$ for some open set $U \subseteq X$,
then $\divisor(\f_i) \ge D$ on $U$ for each $i$, and therefore
$\divisor(\prod\f_i) \ge mD$ on $U$, which means that
$\prod \f_i \in \O_X(-mD)(U)$.
This implies that $\O_X(-D)^m \subseteq \O_X(-mD)$,
and thus $m\.v(\O_X(-D)) \ge v(\O_X(-mD))$,
which proves the first assertion of the lemma.
Both equalities in the display of the lemma follow from this
inequality, and the fact that the infimum is not $-\infty$
follows from the fact that, if $X$ is quasi-projective,
then $D \ge C$ for some Cartier divisor $C$.
In general, one reduces to the quasi-projective case by
restricting to an affine open neighborhood of the generic
point of the center of $v$ in $X$.
\end{proof}

\begin{defi}\label{defi:valuation-pullback}
Let $D$ be a $\Q$-divisor on $X$.
The {\it valuation} along $v$ of $D$ is
$$
v(D) := \lim_{k \to \infty} \frac{v^\natural(k!D)}{k!} \in \R.
$$
If $f \colon Y \to X$ is a birational morphism
from a normal variety $Y$, then the
{\it pullback} of $D$ to $Y$ is
$$
f^* D := \sum \val_E(D)\.E,
$$
where the sum is taken over all prime divisors $E$ on $Y$.
\end{defi}

\begin{prop}\label{prop:val-pullback:additivity:special-case}
Let $D$ be a $\Q$-divisor on $X$, let $v$ be a divisorial valuation
on $X$, and let $f \colon Y \to X$ be a birational morphism
from a normal variety $Y$.
If $D$ is $\Q$-Cartier, then the definition of valuation $v(D)$
and of pullback $f^*D$ given in Definition~\ref{defi:valuation-pullback}
agrees with the one given in \eqref{eq:val-pullback:Q-Cartier}.
In general, if $D$ is an arbitrary $\Q$-divisor, then
$$
v(C+D) = v(C) + v(D) \and
f^*(C+D) = f^*C + f^*D
$$
for any $\Q$-Cartier $\Q$-divisor $C$ on $X$.
\end{prop}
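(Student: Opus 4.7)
The plan is to deduce both claims from Lemmas~\ref{lem:natural-val:additivity:special-case} and \ref{lem:inf=limsup=lim}, with no further geometric input. I would first show that for a $\Q$-Cartier divisor $D$ the asymptotic limit defining $v(D)$ reduces to a single evaluation on a Cartier representative, and then use this to reduce the additivity statement to the exact Cartier case already handled by Lemma~\ref{lem:natural-val:additivity:special-case}.

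For the first assertion, fix $m_0 \in \Z_+$ such that $m_0 D$ is Cartier. Whenever $m_0 \mid k$, the divisor $kD$ is Cartier, so $\O_X(-kD)$ is invertible and $v^\natural(kD) = v(kD)$, and by linearity of $v$ on Cartier divisors $v(kD) = (k/m_0)\,v(m_0 D)$. Since $m_0 \mid k!$ for every $k \ge m_0$, the sequence $v^\natural(k!D)/k!$ is eventually constant with value $v(m_0 D)/m_0$; by Lemma~\ref{lem:inf=limsup=lim} this gives $v(D) = v(m_0 D)/m_0$, which matches the definition in \eqref{eq:val-pullback:Q-Cartier}. The agreement $f^*D = f^*(mD)/m$ then follows coefficient-by-coefficient by applying the scalar statement to $v = \val_E$ for each prime divisor $E$ on $Y$, using that for Cartier $mD$ the natural pullback $f^\natural(mD) = \sum \val_E(mD)\.E$ agrees with the usual Cartier pullback $f^*(mD)$.

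For the additivity statement, fix $m_0 \in \Z_+$ such that $m_0 C$ is Cartier. For every $k$ with $m_0 \mid k!$, the divisor $k! C$ is Cartier, so Lemma~\ref{lem:natural-val:additivity:special-case} applies with $C$ replaced by $k!C$ and $D$ replaced by $k!D$, yielding
$$
v^\natural\bigl(k!(C+D)\bigr) = v(k! C) + v^\natural(k! D) = k!\cdot v(C) + v^\natural(k! D),
$$
where the final equality uses linearity of $v$ on Cartier divisors together with the first part of the proposition. Dividing by $k!$ and letting $k \to \infty$, Lemma~\ref{lem:inf=limsup=lim} produces $v(C+D) = v(C) + v(D)$. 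The pullback identity $f^*(C+D) = f^*C + f^*D$ then follows termwise by specializing to $v = \val_E$ for each prime $E$ on $Y$ and summing.

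The main subtlety is that Lemma~\ref{lem:natural-val:additivity:special-case} requires the perturbation to be genuinely Cartier, not merely $\Q$-Cartier, which is why one must pass through the divisible multiples $k!C$ and only take the limit at the end; apart from this bookkeeping the argument is direct, and no homogeneity statement beyond the Cartier case is invoked.
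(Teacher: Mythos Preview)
Your proof is correct and follows essentially the same approach as the paper: both reduce to Lemma~\ref{lem:natural-val:additivity:special-case} applied to the Cartier divisor $k!C$ and then pass to the limit along $k!$. The only difference is organizational---you prove the $\Q$-Cartier agreement first and then additivity, while the paper does the additivity computation once and reads off the $\Q$-Cartier case as the special instance $D=0$---but the substance is identical.
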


\begin{proof}
For clarity, we momentarily denote by $v'(D)$ the valuation
of a $\Q$-divisor $D$ as defined in Definition~\ref{defi:valuation-pullback}.
Let $C$ and $D$ be $\Q$-divisors, with $C$ $\Q$-Cartier.
We have $v^\natural(mC) = v(mC)$
for every $m \in \Z$ such that $mC$ is a Cartier divisor,
and therefore, observing that $k!C$ is a Cartier divisor
for every $k \gg 0$, we get
$$
v'(C + D)
= \lim_{k \to \infty} \frac{v^\natural(k!C+k!D)}{k!}
= \lim_{k \to \infty} \frac{v(k!C) + v^\natural(k!D)}{k!}
= v(C) + v'(D)
$$
by Proposition~\ref{lem:natural-val:additivity:special-case}.
In particular, if $D$ is $\Q$-Cartier, then we have $v'(D) = v(D)$.
The analogous statements on pullbacks follows from these.
\end{proof}

\begin{rmk}\label{rmk:comparison-natural-natural}
We have $v(D) \le v^\natural(D)$ and $f^*D \le f^\natural D$
for every divisor $D$ on $X$.
Moreover, we have $f^\natural(-D) \ge -f^\natural D$, and hence
$f^*(-D) \ge -f^*D$, for every $D$.
The following example implies that, in general, the last inequality may be strict.
\end{rmk}

The following example was found in a conversation with Lawrence Ein.

\begin{eg}\label{eg:pullback-non-additive}
Let $Y \rat Y^+$ be a flip and $f\colon Y \to X$ be the flipping
contraction, with $X$ normal and affine. Let $v$ be any divisorial valuation on $X$
whose center $C$ in $Y$ is a positive dimensional subset of
a fiber of $f$. Let $H \subset Y$
be a general hyperplane section, and let $D = f_*H$. Note that
$D$ contains $f(C)$, but $H$ does not contain $C$.
If $H^+$ is the proper transform of $H$ on $Y^+$ and $f^+ \colon Y^+ \to X$
is the contraction induced on $Y^+$, then
$$
\bigoplus_{m \ge 0} \O_X(-mD) = \bigoplus_{m\ge 0} f^+_*\O_{Y^+}(-mH^+)
$$
is finitely generated as an $\O_X$-module, since $-H^+$ is $f^+$-ample.
Therefore $v(D) = v(qD)/q$ for a sufficiently divisible $q \ge 1$.
As the ideal sheaf $\O_X(-qD)\.\O_Y$ vanishes along $C$,
it follows that $v(D) > 0$.
On the other hand, since $\O_X(mD)\.\O_Y = \O_Y(mH)$ if $m \ge 1$, we have $v(-D) = 0$,
and hence, in particular, $v(-D) \ne -v(D)$.
\end{eg}

\begin{rmk}\label{rmk:composition-natural-pullback2}
If $f\colon Y \to X$ and $g\colon V \to Y$
are birational morphisms of normal varieties, then it follows from
Lemma~\ref{prop:composition-natural-pullback}
that $(fg)^* D - g^*\big(f^* D \big)$ is effective and $g$-exceptional
for every $\Q$-divisor $D$ on $X$.
\end{rmk}

\section{Relative canonical divisors}

We recall that a {\it canonical divisor} $K_X$ on a normal variety $X$ is,
by definition, the (componentwise) closure
of any canonical divisor of the regular locus of $X$.
We also recall that $X$ is said to be
{\it $\Q$-Gorenstein} if some (equivalently, every)
canonical divisor $K_X$ is $\Q$-Cartier.

We consider a proper birational morphism
$f\colon Y \to X$ of normal varieties.
Push-forward along $f$
gives a bijection between the canonical divisors of $Y$ and
those of $X$. Moreover, if $K_Y$ and $K_Y'$ are two canonical divisors
on $Y$, then $f_*K_Y - f_*K_Y'$ is a principal
divisor and $K_Y - K_Y' = f^*(f_*K_Y - f_*K_Y')$.

Throughout the section, we fix a canonical divisor $K_Y$
on $Y$, and let $K_X = f_*K_Y$.
The standard notion of relative canonical $\Q$-divisor
(given in the case when $K_X$ is $\Q$-Cartier)
admits two generalizations
to non $\Q$-Gorenstein varieties, corresponding to whether
one pulls back $K_X$ or $-K_X$.
As we keep into consideration what the main features
of the theory of singularities of pairs are, and wish to preserve
these in our generalization, it turns out that there
are two different sides of the theory, each of which
requires a different approach, a phenomenon that
disappears in the $\Q$-Gorenstein case
(cf. Remarks~\ref{rmk:K-} and~\ref{rmk:can-v-lt}).

When dealing with the generalization of multiplier and adjoint
ideal sheaves, as well as of log canonical and log terminal singularities,
we will rely on the following notion.

\begin{defi}
For every $m \ge 1$, the {\it $m$-th limiting
relative canonical $\Q$-divisor} $K_{m,Y/X}$ of $Y$ over $X$ is
$$
K_{m,Y/X} := K_Y - \tfrac 1m\. f^\natural(mK_X).
$$
\end{defi}

On the contrary, in order to extend the definitions
of canonical and terminal singularities, we consider the following definition.

\begin{defi}
The {\it relative canonical $\R$-divisor} $K_{Y/X}$ of $Y$ over $X$ is
$$
K_{Y/X} := K_Y + f^*(-K_X).
$$
\end{defi}

Note that the definitions of $K_{m,Y/X}$
and $K_{Y/X}$ do not depend on the choice of $K_Y$.
Moreover, if $X$ is $\Q$-Gorenstein, then
$K_{Y/X}$ is the usual relative canonical $\Q$-divisor,
and it is equal to $K_{m,Y/X}$ for every $m \ge 1$ such that
$mK_X$ is Cartier.

\begin{rmk}\label{rmk:K-}
It follows by Lemma~\ref{lem:inf=limsup=lim} and
Remark~\ref{rmk:comparison-natural-natural} that
$K_{m,Y/X} \le K_{mq,Y/X} \le K_{Y/X}$ for all $m,q \ge 1$.
In particular, taking the limsup of the coefficients of the
components of the $\Q$-divisors $K_{m,Y/X}$,
one obtains the $\R$-divisor $K_{Y/X}^- := K_Y - f^*K_X$,
which satisfies $K_{Y/X}^-\le K_{Y/X}$.
Clearly the two divisors coincide if $X$ is $\Q$-Gorenstein, but
in general they may be different, as the following example shows.
\end{rmk}

\begin{eg}
With the same notation as in Example~\ref{eg:pullback-non-additive},
suppose that $-K_Y$ is $f$-ample. Then a positive multiple
$-mK_Y$ of $-K_Y$ is linearly equivalent
to a general hyperplane section $H$ of $Y$. We fix $K_X = f_*K_Y$,
and let $D = f_*H$. Note that $B := D - mK_X$ is a principal divisor,
and hence it is Cartier. Then for every birational morphism
$g \colon X' \to X$ factoring through $Y$ and extracting a divisor
with center in $Y$ equal to $C$ (cf. Example~\ref{eg:pullback-non-additive}),
we have
$$
g^*(-mK_X) = g^*B + g^*(-D) \ne
g^*B - g^*D = - g^*(mK_X)
$$
by Proposition~\ref{prop:val-pullback:additivity:special-case}
and Example~\ref{eg:pullback-non-additive}.
This implies that $g^*(-K_X) \ne - g^*K_X$,
since the pullback is by definition homogeneous
(with respect to positive multiples) on all divisors. In particular,
in this example we have $K_{X'/X}\ne K_{X'/X}^-$.
\end{eg}

\begin{lem}\label{prop:relative-K-for-composition}
Let $m$ be a positive integer, and let
$f\colon Y \to X$ be a proper
birational morphism from a normal variety $Y$
such that $mK_Y$ is Cartier and $\O_X(-mK_X)\.\O_Y$ is invertible.
Then for every proper birational morphism
$g \colon V \to Y$ from a normal variety $V$ we have
$$
K_{m,V/X} = K_{m,V/Y} + g^* K_{m,Y/X},
$$
\end{lem}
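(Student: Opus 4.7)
The plan is to unwind the three definitions and reduce everything to an identity among Cartier (or $\Q$-Cartier) divisors, where the various notions of pullback all agree.

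First, I would write out
\[
K_{m,V/X} = K_V - \tfrac{1}{m}(fg)^{\natural}(mK_X), \qquad
K_{m,V/Y} = K_V - \tfrac{1}{m}g^{\natural}(mK_Y), \qquad
K_{m,Y/X} = K_Y - \tfrac{1}{m}f^{\natural}(mK_X).
\]
The hypotheses are used twice. Since $\O_X(-mK_X)\cdot\O_Y$ is invertible, Lemma~\ref{prop:composition-natural-pullback} applied with $D = mK_X$ yields
\[
(fg)^{\natural}(mK_X) = g^{\natural}\bigl(f^{\natural}(mK_X)\bigr),
\]
and moreover $f^{\natural}(mK_X)$ is itself Cartier, so $g^{\natural}(f^{\natural}(mK_X))$ coincides with the ordinary Cartier pullback $g^{*}(f^{\natural}(mK_X))$. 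Similarly, because $mK_Y$ is Cartier, $g^{\natural}(mK_Y) = g^{*}(mK_Y) = m\,g^{*}K_Y$ (the last equality being the defining homogeneity of the $\Q$-Cartier pullback of $K_Y$), so that
\[
K_{m,V/Y} = K_V - g^{*}K_Y.
\]

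Next I would handle $g^{*}K_{m,Y/X}$. Since $mK_Y$ is Cartier and $f^{\natural}(mK_X)$ is Cartier, $K_{m,Y/X}$ is $\Q$-Cartier on $Y$, so its pullback in the sense of Definition~\ref{defi:valuation-pullback} coincides with the usual one and, by Proposition~\ref{prop:val-pullback:additivity:special-case} (additivity of pullback when one summand is $\Q$-Cartier, applied with both summands $\Q$-Cartier), splits as
\[
g^{*}K_{m,Y/X} = g^{*}K_Y - \tfrac{1}{m}g^{*}\bigl(f^{\natural}(mK_X)\bigr).
\]

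Finally, combining the two displays,
\[
K_{m,V/Y} + g^{*}K_{m,Y/X}
= \bigl(K_V - g^{*}K_Y\bigr) + g^{*}K_Y - \tfrac{1}{m}g^{*}\bigl(f^{\natural}(mK_X)\bigr)
= K_V - \tfrac{1}{m}(fg)^{\natural}(mK_X)
= K_{m,V/X},
\]
which is the desired identity.

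There is no real obstacle here: the content of the lemma is entirely that the two hypotheses ($mK_Y$ Cartier and $\O_X(-mK_X)\cdot\O_Y$ invertible) together force each natural pullback appearing in the definition of $K_{m,\bullet/\bullet}$ to be an honest Cartier pullback, at which point composition of pullbacks (Lemma~\ref{prop:composition-natural-pullback}, equality case) and additivity of pullback for $\Q$-Cartier divisors (Proposition~\ref{prop:val-pullback:additivity:special-case}) suffice. The only place where some care is needed is in verifying that $f^{\natural}(mK_X)$ really is Cartier under the invertibility hypothesis, which is immediate from $\O_Y(-f^{\natural}(mK_X)) = (\O_X(-mK_X)\cdot\O_Y)^{\vee\vee} = \O_X(-mK_X)\cdot\O_Y$.
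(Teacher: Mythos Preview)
Your proof is correct and follows essentially the same approach as the paper: use the invertibility hypothesis together with Lemma~\ref{prop:composition-natural-pullback} to identify $(fg)^\natural(mK_X)$ with $g^*(f^\natural(mK_X))$, use that $mK_Y$ is Cartier to simplify $K_{m,V/Y}$, and then split $g^*K_{m,Y/X}$ by linearity of pullback for $\Q$-Cartier divisors. The paper's argument is slightly terser but logically identical.
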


\begin{proof}
Note that $f^\natural(mK_X)$ is a Cartier divisor on $Y$, and thus
$$
K_{m,V/X} = K_V - \tfrac 1m\. (fg)^\natural(mK_X)
= K_V - \tfrac 1m\. g^\natural \big(f^\natural(mK_X)\big)
= K_V - \tfrac 1m\. g^* \big(f^\natural(mK_X)\big)
$$
by Lemma~\ref{prop:composition-natural-pullback}.
Since $mK_Y$ is Cartier, we also have
$$
K_{m,V/Y} = K_V - \tfrac 1m\. g^\natural(mK_Y) =
K_V - \tfrac 1m\. g^*(mK_Y) =  K_V - g^*K_Y,
$$
and moreover
$$
g^*K_{m,Y/X} = g^*\big(K_Y - \tfrac 1m\. f^\natural(mK_X)\big)
= g^*K_Y - \tfrac 1m\.g^*\big( f^\natural(mK_X)\big).
$$
The lemma follows.
\end{proof}

\begin{rmk}\label{rmk:composition-K}
Similarly, Remark~\ref{rmk:composition-natural-pullback2}
implies that given proper birational morphisms of normal varieties
$f \colon Y \to X$ and $g \colon V \to Y$ with $Y$ $\Q$-Gorenstein, we have
$K_{V/X} \ge K_{V/Y} + g^* K_{Y/X}$ and the difference is $g$-exceptional.
\end{rmk}

A different approach to deal with varieties that are not
$\Q$-Gorenstein, largely followed in the last decades,
is to introduce a `boundary'.
The trick is to `perturb' a canonical divisor $K_X$ of $X$
to make it $\Q$-Cartier.

\begin{defi}
An effective $\Q$-divisor $\D$ is a {\it boundary} on $X$
if $K_X + \D$ is a $\Q$-Cartier $\Q$-divisor for some
(equivalently, for any) canonical divisor $K_X$ of $X$.
If $\D$ is a boundary, then we refer to the pair
$(X,\D)$ as a {\it log variety} (or {\it variety with boundary}).
\end{defi}

\begin{defi}
Let $\D$ be a boundary on $X$, and let
$\D_Y$ be the proper transform of $\D$ on $Y$.
The {\it log relative canonical $\Q$-divisor} of $(Y,\D_Y)$ over $(X,\D)$
is given by
$$
K_{Y/X}^\D := K_Y + \D_Y - f^*(K_X + \D)
= K_Y + \D_Y + f^*(- K_X - \D).
$$
\end{defi}

If $X$ is $\Q$-Gorenstein and $\D=0$, then
$K_{Y/X}^0 = K_{Y/X} = K_{m,Y/X}$ for all sufficiently
divisible $m \ge 1$,
and $K_{Y/X}^\D$ depends on $\D$ but not on the choice of $K_X$.

\begin{rmk}\label{rmk:comparison-K-with-K(Delta)}
For every boundary $\D$ on $X$ and every $m \ge 1$
such that $m(K_X+\D)$ is Cartier, we have
$$
K_{m,Y/X} = K_{Y/X}^\D - \tfrac 1m \.f^\natural(-m\D) - \D_Y
\and
K_{Y/X} = K_{Y/X}^\D + f^*\D - \D_Y.
$$
Indeed we have $f^*(-K_X) = f^*(-K_X - \D + \D) = - f^*(K_X + \D) + f^*\D$
by Proposition~\ref{prop:val-pullback:additivity:special-case},
and similarly $f^\natural(mK_X) = f^\natural(m(K_X + \D) - m\D)
= m\.f^*(K_X + \D) + f^\natural(-m\D)$ by
Lemma~\ref{lem:natural-val:additivity:special-case}.
In particular, if $m$ is any positive integer such that
$m(K_X + \D)$ is a Cartier divisor, then $K_{Y/X}^\D \le K_{m,Y/X}$.
\end{rmk}

\section{Multiplier ideal sheaves}\label{sect:multiplier-ideals}

For the reminder of this paper, we work over an
algebraically closed field of characteristic zero.
We consider {\it pairs} of the form $(X,I)$, where
$X$ is a normal quasi-projective variety
and $I = \sum a_k\.\I_k$ is a formal $\R$-linear
combination of non-zero fractional ideal sheaves on $X$.
If each $\I_k$ is an ideal sheaf,
$Z_k \subset X$ the subscheme defined by $\I_k$,
and $Z = \sum a_k \. Z_k$ is the corresponding
formal linear combination, then
we identify the pairs $(X,I)$ and $(X,Z)$.
More generally, we allow hybrid notation
by considering pairs $(X,W + J)$, where $W$ is a formal linear combination
of proper closed subschemes and $J$ is a formal linear combination
of fractional ideal sheaves.
If $\D$ is a boundary on $X$, then
we consider {\it log pairs} of the form $((X,\D);I)$ (or more generally
of the form $((X,\D);W+J)$).

Given a formal linear combination $Z = \sum a_k\.Z_k$ on $X$,
if $f \colon Y \to X$ is a morphism such that the
scheme theoretic inverse image $f^{-1}(Z_k)$ is a Cartier divisor for
every $k$, then for short we denote $f^{-1}(Z) := \sum a_k\.f^{-1}(Z_k)$.

\begin{defi}
Consider a pair $(X,I)$ as above. A {\it log resolution} of $(X,I)$
is a proper birational morphism $f \colon Y \to X$ from
a smooth variety $Y$ such that for every $k$
the sheaf $\I_k\.\O_Y$ is the invertible sheaf of a divisor $E_k$
on $Y$, the exceptional locus $\Ex(f)$ of $f$ is also a divisor,
and $\Ex(f) \cup E$ has simple normal crossing,
where $E := \bigcup \Supp(E_k)$. If $\D$ is a boundary on $X$, then
a {\it log resolution} of the log pair $((X,\D);I)$ is given
by a log resolution $f \colon Y \to X$ of $(X,I)$ such that
$\Ex(f) \cup E \cup \Supp(f^*(K_X + \D))$ has simple normal crossings.
\end{defi}

\begin{thm}[\cite{Hir}]\label{thm:existence-of-resolution}
Let $(X,I)$ be a pair as above,
where $X$ is a normal quasi-projective
variety defined over an algebraically
closed field of characteristic zero.
Then there exists a log resolution of $(X,I)$.
If $\D$ is a boundary on $X$, then
there exists a log resolution of singularities of $((X,\D);I)$.
\end{thm}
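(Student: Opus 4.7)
The plan is to reduce the statement to classical Hironaka, by repackaging the fractional ideals $\I_k$ and the $\Q$-Cartier divisor $K_X+\D$ into a finite collection of ordinary coherent ideal sheaves on $X$. Once this is done, the statement follows from the desingularization and simultaneous principalization of finitely many ideal sheaves on a quasi-projective variety over an algebraically closed field of characteristic zero.

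First I would turn each fractional ideal sheaf into an ordinary one. Since $X$ is quasi-projective, fix a very ample Cartier divisor $H$ on $X$. For each $k$, coherence of $\I_k \subset \K_X$ yields an integer $n_k \ge 0$ with $\J_k := \I_k\.\O_X(-n_k H) \subseteq \O_X$; equivalently, $\I_k = \J_k\.\O_X(n_k H)$ as sub-$\O_X$-modules of $\K_X$.

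Next I would apply the classical Hironaka theorem to the finite collection of ordinary coherent ideal sheaves consisting of the $\J_k$, the ideals $\O_X(-n_k H)$, and, for the log-pair statement, the defining ideal sheaves of each prime component of a fixed Weil representative of $\D$ and of $K_X$. This yields a proper birational morphism $f\colon Y\to X$ from a smooth variety $Y$ such that $\Ex(f)$ is a divisor, every listed ideal pulls back to an invertible sheaf on $Y$, and the union of $\Ex(f)$ with the supports of the resulting Cartier divisors is simple normal crossing.

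Finally I would verify the stated conditions. Since both $\J_k\.\O_Y$ and $\O_X(-n_k H)\.\O_Y$ are invertible, so is $\I_k\.\O_Y = (\J_k\.\O_Y)\otimes(\O_X(n_k H)\.\O_Y)$, and the support of its associated divisor $E_k$ sits inside the union of the Cartier divisors of those two factors, hence inside the SNC divisor produced above. For the log-pair statement, pick $m\ge 1$ with $m(K_X+\D)$ Cartier; then $f^*(K_X+\D) = \tfrac{1}{m} f^*(m(K_X+\D))$ is the pullback of a Cartier divisor, so its support is contained in $\Ex(f)$ together with the strict transforms of the prime components of $K_X+\D$, which by construction lie in the same SNC divisor. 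The argument is essentially bookkeeping on top of Hironaka; the only substantive reduction is the clearing-denominators trick with a single ample Cartier divisor, and I do not expect any serious obstacle beyond organizing the input correctly.
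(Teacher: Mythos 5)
Your proof is correct and follows essentially the same route as the paper's: both clear denominators by twisting the fractional ideals by a Cartier divisor to obtain ordinary coherent ideal sheaves (the paper uses a single Cartier divisor $C$ with $\O_X(C)\cdot\I_k\subseteq\O_X$ for all $k$, you use multiples $n_kH$ of a fixed very ample $H$), adjoin the supports of the auxiliary Cartier divisor and, in the log-pair case, the components of $K_X$ and $\D$, and then invoke classical Hironaka. The minor cosmetic difference (one divisor $C$ versus per-index multiples of $H$) does not change the substance of the argument.
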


\begin{proof}
Let $C$ be a Cartier divisor on $X$ such that
$\O_X(C)\.\I_k \subseteq \O_X$ for every $k$,
and let $W = \sum a_k\.W_k$, where $W_k \subset X$ is the
subscheme defined by the ideal sheaf $\O_X(C)\.\I_k$.
Then any log resolution of the pair $(X,W + \Supp(C))$
(respectively, of $((X,\D);W + \Supp(C))$)
is a log resolution of $(X,I)$ (respectively, of $((X,\D);I)$).
This reduces the theorem to the original version due to Hironaka.
\end{proof}

\begin{defi}
We say that $Z$, or $(X,Z)$, is {\it effective} if $a_k \ge 0$
for all $k$. If $\D$ is a boundary on $X$, then
we say that the log pair $((X,\D);Z)$ is {\it effective} if so is $Z$.
\end{defi}

Consider now an arbitrary effective pair $(X,Z)$.
Because of the possible failure of functoriality
for composition of pullback of arbitrary $\Q$-divisors
(cf. Lemma~\ref{prop:composition-natural-pullback}),
the definition of multiplier ideal sheaf of $(X,Z)$ requires
some preparation.

We fix a canonical divisor $K_X$ on $X$.
For any fixed integer $m \ge 1$, we consider
a log resolution $f \colon Y \to X$ of the pair $(X,Z + \O_X(-mK_X))$,
and define
$$
\J_m(X,Z):= f_*\O_Y(\lru K_{m,Y/X} - f^{-1}(Z)\rru).
$$
When $Z = 0$, we denote this sheaf by $\J_m(X)$.

The proof of the next proposition is similar to the proof of the
analogous properties for multiplier ideals in the $\Q$-Gorenstein case;
we give it for completeness.

\begin{prop}\label{prop:J_m}
The sheaf $\J_m(X,Z)$ is a (coherent) sheaf of
ideals on $X$, and its definition
is independent of the choice of $f$.
\end{prop}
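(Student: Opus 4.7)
I plan to prove both assertions by the standard cover-by-common-resolution argument, whose only non-routine input is Lemma~\ref{prop:relative-K-for-composition}. Since $f$ is proper, the pushforward is automatically coherent, and containment in $\O_X$ reduces to the claim that $K_{m,Y/X}$ has order zero along every non-$f$-exceptional prime divisor $E \subset Y$. Indeed, writing $E' = f(E)$, the reflexive rank-one sheaf $\O_X(-mK_X)$ is invertible at the generic point of $E'$ (by normality in codimension one) and is generated locally there by a rational function $\f$ with $\ord_{E'}(\f) = \ord_{E'}(mK_X)$; birationality of $f$ gives $\ord_E(f^\natural(mK_X)) = \ord_{E'}(mK_X)$, and the equality $K_X = f_*K_Y$ gives $\ord_E(K_Y) = \ord_{E'}(K_X)$, so the two contributions cancel in $K_{m,Y/X}$. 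Combined with the effectiveness of $Z$, this shows that $\lceil K_{m,Y/X} - f^{-1}(Z)\rceil$ has non-positive coefficient along every non-exceptional prime of $Y$, forcing any local section of $\J_m(X,Z)$ to be a rational function on $X$ that is regular in codimension one, hence regular by normality.

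For independence, I would start from two log resolutions $f_1\colon Y_1\to X$ and $f_2\colon Y_2\to X$ and apply Theorem~\ref{thm:existence-of-resolution} to produce a third log resolution $V\to X$ that dominates both via morphisms $h_i\colon V\to Y_i$. After this reduction, the problem becomes that of comparing a single pair of log resolutions related by a proper birational morphism $h\colon V\to Y$, namely showing
$$
f_*\O_Y\bigl(\lceil K_{m,Y/X} - f^{-1}(Z)\rceil\bigr) = (fh)_*\O_V\bigl(\lceil K_{m,V/X} - (fh)^{-1}(Z)\rceil\bigr).
$$
Since $Y$ is smooth and $\O_X(-mK_X)\cdot\O_Y$ is invertible by the log resolution hypothesis, Lemma~\ref{prop:relative-K-for-composition} applies and yields $K_{m,V/X} = K_{m,V/Y} + h^*K_{m,Y/X}$. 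Moreover, the Cartier property of $mK_Y$ forces $K_{m,V/Y}$ to coincide with the classical integral relative canonical divisor $K_{V/Y}$, and $(fh)^{-1}(Z) = h^*f^{-1}(Z)$ because the components of $f^{-1}(Z)$ are Cartier divisors on $Y$. Setting $D := K_{m,Y/X} - f^{-1}(Z)$, these identities combine to give
$$
\lceil K_{m,V/X} - (fh)^{-1}(Z)\rceil = K_{V/Y} + \lceil h^*D\rceil.
$$

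The remaining and main task is then to verify the by-now-classical identity
$$
h_*\O_V\bigl(K_{V/Y} + \lceil h^*D\rceil\bigr) = \O_Y\bigl(\lceil D\rceil\bigr),
$$
for a $\Q$-divisor $D$ on the smooth variety $Y$ whose support, by virtue of $Y$ being a log resolution of $(X,Z+\O_X(-mK_X))$, is contained in a simple normal crossings divisor that is further refined by $h$. I expect this last identity to be the principal obstacle, but it is handled by the usual projection formula argument: writing $\lceil h^*D\rceil = h^*\lceil D\rceil - \lfloor h^*\{D\}^+\rfloor$, where $\{D\}^+ := \lceil D\rceil - D$ has snc support and coefficients in $[0,1)$, the projection formula reduces the claim to the vanishing $h_*\O_V(K_{V/Y} - \lfloor h^*\{D\}^+\rfloor) = \O_Y$, which follows from the snc hypothesis exactly as in the classical smooth-ambient case. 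The whole argument is then word-for-word the one used for multiplier ideal sheaves on smooth varieties, as anticipated in the remark preceding the statement.
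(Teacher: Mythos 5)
Your proof is correct and follows essentially the same route as the paper: reduction to a dominating resolution, the key identity $K_{m,V/X}=K_{V/Y}+h^*K_{m,Y/X}$ from Lemma~\ref{prop:relative-K-for-composition}, and the classical local vanishing lemma for pushforwards along a birational morphism of smooth varieties. The only notable difference is that you unwind in detail the observation that $K_{m,Y/X}$ has order zero along non-exceptional divisors (which the paper absorbs into its citation of Fujita's lemma) and sketch a proof of the classical identity rather than citing Lazarsfeld directly.
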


We start with two lemmas.

\begin{lem}\label{lem:Fujita}
Let $f \colon Y \to X$ be a proper birational morphism
from a smooth variety $Y$ to a normal variety $X$,
let $P$ and $N$ be effective divisors on $Y$
without common components, and suppose that $P$ is $f$-exceptional.
Then $f_*\O_Y(P-N) = f_*\O_Y(-N)$.
\end{lem}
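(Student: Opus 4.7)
My plan is to prove the nontrivial inclusion $f_*\O_Y(P-N)\subseteq f_*\O_Y(-N)$, since the reverse containment is immediate from $P\ge 0$. Fix an open $U\subseteq X$ and a local section $\phi\in f_*\O_Y(P-N)(U)$; equivalently, $\phi$ is a rational function on $X$ satisfying $\divisor_Y(\phi)+P-N\ge 0$ on $f^{-1}(U)$. The goal is to upgrade this to $\divisor_Y(\phi)\ge N$ on $f^{-1}(U)$.

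I would split the analysis according to whether a prime divisor $E$ on $Y$ meeting $f^{-1}(U)$ lies in $\Supp(P)$ or not. If $E\not\subseteq\Supp(P)$ then $\ord_E(P)=0$ and the bound $\ord_E(\phi)\ge\ord_E(N)$ drops out of the hypothesis. If $E\subseteq\Supp(P)$ then $E$ is $f$-exceptional, and the disjointness of $\Supp(P)$ and $\Supp(N)$ gives $\ord_E(N)=0$; what remains in this case is the regularity statement $\ord_E(\phi)\ge 0$.

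To extract the regularity bound I use the open set $V:=U\setminus f(\Supp(P))$. Since $P$ is $f$-exceptional, $f(\Supp(P))$ has codimension at least two in $X$, so $V$ is big in $U$. Over $f^{-1}(V)$ the divisor $P$ is trivial, so the hypothesis on $\phi$ collapses to $\divisor_Y(\phi)\ge N$; in particular $\phi|_V\in f_*\O_Y(-N)(V)\subseteq\O_X(V)$, so $\phi$ is regular on $V$. The normality of $X$ together with $\codim_U(U\setminus V)\ge 2$ then extends $\phi$ to a regular function on all of $U$. Pulling back yields $\phi\in\O_Y(f^{-1}(U))$, whence $\ord_E(\phi)\ge 0$ for every prime divisor $E$ of $Y$ meeting $f^{-1}(U)$ — in particular for those in $\Supp(P)$.

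Combining the two cases gives $\divisor_Y(\phi)\ge N$ on $f^{-1}(U)$, i.e.\ $\phi\in f_*\O_Y(-N)(U)$. The one conceptually nontrivial step is the extension of $\phi$ across $U\setminus V$: it is precisely here that both the $f$-exceptionality of $P$ (to guarantee codimension at least two of $f(\Supp(P))$) and the normality of $X$ (to extend regular functions across a codimension-two locus) are indispensable. Without either hypothesis, $\phi$ could develop genuine poles along the exceptional components of $P$, and the claimed equality would fail.
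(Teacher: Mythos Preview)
Your proof is correct and is genuinely different from the paper's argument. The paper invokes Fujita's lemma \cite[Lemma~1-3-2]{KMM}, which gives the vanishing $f_*\O_P(P)=0$; since $P$ and $N$ share no components and $N\ge 0$, one has an injection $\O_P(P-N)\hookrightarrow\O_P(P)$, hence $f_*\O_P(P-N)=0$, and the claim follows by pushing forward the short exact sequence $0\to\O_Y(-N)\to\O_Y(P-N)\to\O_P(P-N)\to 0$. Your argument, by contrast, is entirely elementary and self-contained: rather than appealing to Fujita's vanishing (whose proof ultimately rests on the negativity of the exceptional intersection form), you work section by section and reduce everything to the algebraic Hartogs property of the normal base $X$. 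This makes your proof more transparent and shows that smoothness of $Y$ is not really needed---normality suffices---whereas the paper's route packages the geometry into a cohomological black box. Both arguments use exactly the same three hypotheses in essential ways, but yours isolates more clearly \emph{where} each one enters.
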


\begin{proof}
By a lemma of Fujita, which gives the vanishing $f_*\O_P(P) = 0$
(see \cite[Lemma~1-3-2]{KMM}), and hence $f_*\O_P(P-N) = 0$.
\end{proof}

\begin{lem}\label{lem:EV-from-Laz}
Let $g \colon Y' \to Y$ be a proper birational morphism of smooth
varieties, and let $D$ be an effective $\R$-divisor on $Y$ with simple normal
crossing support.
Then
$$
g_*\O_{Y'}\big(K_{Y'/Y} + \lru g^*D \rru \big) = \O_Y\big(\lru D \rru \big).
$$
\end{lem}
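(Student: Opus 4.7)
The plan is to combine the projection formula with Fujita's exceptional-vanishing result (Lemma~\ref{lem:Fujita}), reducing the statement to proving that an auxiliary integer divisor is effective and $g$-exceptional; the effectiveness will come from the klt property of a natural SNC pair on $Y$.

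Since $Y$ is smooth, $\lceil D \rceil$ is a Cartier divisor, so by the projection formula
$$
g_*\O_{Y'}\bigl(K_{Y'/Y} + \lceil g^*D \rceil\bigr) \;=\; g_*\O_{Y'}(F) \otimes \O_Y\bigl(\lceil D \rceil\bigr),
$$
where $F := K_{Y'/Y} + \lceil g^*D \rceil - g^*\lceil D \rceil$. It therefore suffices to show $g_*\O_{Y'}(F) = \O_Y$. I would next verify that $F$ is $g$-exceptional: on any non-exceptional prime divisor $F' \subset Y'$, which is the strict transform of a prime divisor $F$ on $Y$, $K_{Y'/Y}$ contributes $0$ and the identity $\ord_{F'}(\lceil g^*D \rceil) = \lceil \ord_F(D) \rceil = \ord_{F'}(g^*\lceil D \rceil)$ forces the remaining terms to cancel, so $F$ has coefficient zero along $F'$.

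The crux is effectiveness along the $g$-exceptional divisors. Setting $\{-D\} := \lceil D \rceil - D$, this is an effective $\R$-divisor with SNC support and coefficients in $[0,1)$, so the SNC pair $(Y, \{-D\})$ is klt. Hence for every $g$-exceptional prime divisor $E_j \subset Y'$ the discrepancy satisfies
$$
\ord_{E_j}(K_{Y'/Y}) - \ord_{E_j}\bigl(g^*\{-D\}\bigr) \;>\; -1.
$$
Writing $b_j := \ord_{E_j}(g^*D)$ and using $\ord_{E_j}(g^*\{-D\}) = \ord_{E_j}(g^*\lceil D \rceil) - b_j$, a direct rearrangement shows that the coefficient of $F$ along $E_j$ equals
$$
\bigl[\ord_{E_j}(K_{Y'/Y}) - \ord_{E_j}(g^*\{-D\})\bigr] \;+\; \bigl(\lceil b_j \rceil - b_j\bigr),
$$
which is strictly greater than $-1$. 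Since $F$ has integer coefficients, the coefficient is forced to be $\geq 0$. Thus $F$ is effective and $g$-exceptional, and Lemma~\ref{lem:Fujita} (with $N = 0$) gives $g_*\O_{Y'}(F) = g_*\O_{Y'} = \O_Y$, which combined with the projection formula yields the desired equality.

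The main obstacle is the effectiveness of $F$ along exceptional divisors: $g^*\lceil D \rceil$ can carry arbitrarily large multiplicity along such an $E_j$, while $\lceil g^*D \rceil$ differs from $g^*D$ only by a fractional amount, so direct estimates do not yield a positivity bound. The trick is to repackage the computation through the klt discrepancy of the SNC pair $(Y, \{-D\})$, which upgrades a strict real inequality to an integer inequality $\geq 0$ essentially for free.
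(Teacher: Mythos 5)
Your proof is correct, and it is essentially the argument behind the cited result, which the paper itself invokes without proof (\cite[Lemma~9.2.19]{Laz}): reduce by the projection formula to showing that $F := K_{Y'/Y} + \lceil g^*D\rceil - g^*\lceil D\rceil$ is effective and $g$-exceptional, then conclude by the Fujita-type push-forward (Lemma~\ref{lem:Fujita} with $N=0$). Packaging the coefficient estimate through the klt discrepancy of the SNC pair $(Y,\{-D\})$, and then using integrality of $F$ to upgrade the strict bound $>-1$ to $\geq 0$, is a tidy way to phrase the standard inequality and is sound.
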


\begin{proof}
See \cite[Lemma~9.2.19 and Remark~9.2.10]{Laz}.
\end{proof}

\begin{proof}[Proof of Proposition~\ref{prop:J_m}]
Let $f'\colon Y' \to X$ be another log-resolution of $(X,Z + \O_X(-mK_X))$.
Since we can always compare $f$ and $f'$ with a common resolution,
we may assume, without loss of generality, that $f'$ factors through
$f$ and a morphism $g\colon Y' \to Y$.
By Lemma~\ref{prop:relative-K-for-composition},
we have $K_{m,Y'/X} = K_{Y'/Y} + g^*K_{m,Y/X}$, and hence
$$
(fg)_*\O_{Y'}\big(\lru K_{m,Y'/X} - (fg)^{-1}(Z) \rru\big)
= {f}_*\big(g_*\O_{Y'}\big(K_{Y'/Y} +
\lru g^*(K_{m,Y/X} - f^{-1}(Z)) \rru\big)\big).
$$
Therefore, by Lemma~\ref{lem:EV-from-Laz}, we obtain
$$
(fg)_*\O_{Y'}\big(\lru K_{m,Y'/X} - (fg)^{-1}(Z) \rru\big)
= {f}_*\O_{Y}\big(\lru K_{m,Y/X} - f^{-1}(Z) \rru\big).
$$
This proves the independence of the definition from the choice of $f$.
The fact that $\J_m(X,Z)$ is a sheaf of ideals follows from
Lemma~\ref{lem:Fujita}.
\end{proof}

\begin{prop}\label{prop:set-J_m-has-unique-max-element}
The set of ideal sheaves $\{\J_m(X,Z)\}_{m \ge 1}$
has a unique maximal element.
\end{prop}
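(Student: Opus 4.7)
The plan is to show that the collection $\{\J_m(X,Z)\}_{m \ge 1}$ is directed upward under inclusion and then to conclude from Noetherianity of $X$.

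The key step is the monotonicity $\J_m(X,Z) \subseteq \J_{mq}(X,Z)$ for all $m, q \ge 1$. To prove this I would pick a single log resolution $f \colon Y \to X$ of the combined pair $(X, Z + \O_X(-mK_X) + \O_X(-mqK_X))$; such an $f$ exists by Theorem~\ref{thm:existence-of-resolution}, and by the independence from the choice of log resolution established in Proposition~\ref{prop:J_m}, both $\J_m(X,Z)$ and $\J_{mq}(X,Z)$ may be computed on this common $Y$. By Remark~\ref{rmk:K-} we have $K_{m,Y/X} \le K_{mq,Y/X}$, hence
$$
\O_Y\big(\lru K_{m,Y/X} - f^{-1}(Z) \rru\big) \subseteq \O_Y\big(\lru K_{mq,Y/X} - f^{-1}(Z) \rru\big),
$$
and applying $f_*$ yields the claimed inclusion of ideal sheaves.

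Directedness is now immediate: for any $m_1, m_2 \ge 1$, the element $\J_{m_1 m_2}(X,Z)$ contains both $\J_{m_1}(X,Z)$ and $\J_{m_2}(X,Z)$. Since $X$ is Noetherian, any ascending chain of ideal sheaves of $\O_X$ stabilizes, and hence the collection $\{\J_m(X,Z)\}_{m \ge 1}$ contains at least one maximal element $\J_{m_0}(X,Z)$. For an arbitrary $m \ge 1$, directedness gives $\J_{m_0} \subseteq \J_{m m_0}$, which by maximality of $\J_{m_0}$ forces $\J_{m_0} = \J_{m m_0}$; combining this with $\J_m \subseteq \J_{m m_0}$ shows that $\J_m \subseteq \J_{m_0}$. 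Thus $\J_{m_0}(X,Z)$ is the unique maximum.

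The only nontrivial ingredient is the comparison $K_{m,Y/X} \le K_{mq,Y/X}$, which has already been set up via Lemma~\ref{lem:inf=limsup=lim} and Remark~\ref{rmk:K-}. Once this monotonicity is in hand, the remainder of the argument is purely formal, so I do not anticipate any serious obstacle.
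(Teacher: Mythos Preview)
Your argument is correct and matches the paper's proof essentially line for line: both establish $\J_m(X,Z) \subseteq \J_{mq}(X,Z)$ by computing on a common log resolution of $(X,Z+\O_X(-mK_X)+\O_X(-mqK_X))$ via Proposition~\ref{prop:J_m} and the inequality $K_{m,Y/X} \le K_{mq,Y/X}$ from Remark~\ref{rmk:K-}, and then conclude from the Noetherian property. Your directedness/maximality paragraph merely spells out explicitly what the paper leaves implicit.
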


\begin{proof}
We have $\J_m(X,Z) \subseteq \J_{mq}(X,Z)$ for all $m,q \ge 1$.
Indeed, by Proposition~\ref{prop:J_m},
this inclusion can be verified for any choice of $m$
and $q$ by taking a log resolution $f \colon Y \to X$
of $(X,Z + \O_X(-mK_X) + \O_X(-mqK_X))$ and applying the first formula in
Remark~\ref{rmk:K-}.
Therefore, by the Noetherian property of $X$,
the set of ideal sheaves $\{\J_m(X,Z)\}_{m \ge 1}$
has a unique maximal element.
\end{proof}

\begin{defi}
Let $(X,Z)$ be an effective pair.
The unique maximal element of $\{\J_m(X,Z)\}_{m \ge 1}$
is called the {\it multiplier ideal sheaf} of $(X,Z)$,
and is denoted by $\J(X,Z)$. If $Z$ is trivial, then
we denote the corresponding multiplier ideal
sheaf by $\J(X)$.
\end{defi}

Note that $\J(X,Z) = \J_m(X,Z)$ for all sufficiently divisible $m \ge 1$.
If $X$ is $\Q$-Gorenstein, then
this definition of multiplier ideal sheaf agrees with the usual one.

We close this section with some basic properties of multiplier ideals.

\begin{prop}\label{prop:basic-properties-mult-ideal}
Let $Z = \sum b_k\.Z_k$ be an effective linear combination
of proper closed subschemes of a normal variety $X$.
\begin{enumerate}
\item
If $Z' = \sum b_k'\.Z_k'$ with $b_k' \ge b_k$ and $\I_{Z_k'} \subseteq \I_{Z_k}$
for all $k$, then $\J(X,Z') \subseteq \J(X,Z)$.
\item
There is an $\ep > 0$ such that
$\J(X,(1+t)Z) = \J(X,Z)$ for all $0 < t \le \ep$.
\end{enumerate}
\end{prop}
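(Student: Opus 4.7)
The plan is to work with the approximations $\J_m(X,Z)$ and pass to a common log resolution. For part (a), by Proposition~\ref{prop:set-J_m-has-unique-max-element} (and its proof) I can pick a single positive integer $m$, sufficiently divisible, so that $\J(X,Z) = \J_m(X,Z)$ and $\J(X,Z') = \J_m(X,Z')$ simultaneously. I then take a log resolution $f\colon Y \to X$ of the combined pair $(X, Z + Z' + \O_X(-mK_X))$; this $f$ serves as a log resolution of both $(X, Z + \O_X(-mK_X))$ and $(X, Z' + \O_X(-mK_X))$, so by Proposition~\ref{prop:J_m} both $\J_m(X,Z)$ and $\J_m(X,Z')$ are computed on it. The hypothesis $\I_{Z_k'} \subseteq \I_{Z_k}$ gives $\I_{Z_k'}\.\O_Y \subseteq \I_{Z_k}\.\O_Y$ and hence $f^{-1}(Z_k') \ge f^{-1}(Z_k)$ on $Y$; combined with $b_k' \ge b_k \ge 0$, this yields $f^{-1}(Z') \ge f^{-1}(Z)$ as effective $\R$-divisors on $Y$. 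Hence $\lru K_{m,Y/X} - f^{-1}(Z') \rru \le \lru K_{m,Y/X} - f^{-1}(Z)\rru$, and pushing forward by $f_*$ gives (a).

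For part (b), the inclusion $\J(X, (1+t)Z) \subseteq \J(X,Z)$ follows at once from (a) applied with $Z_k' = Z_k$ and $b_k' = (1+t)b_k$. For the reverse inclusion, I fix $m$ sufficiently divisible so that $\J(X,Z) = \J_m(X,Z)$, together with a log resolution $f\colon Y \to X$ of $(X, Z + \O_X(-mK_X))$; since the log resolution condition depends only on the ideals $\I_{Z_k}$ and not on the coefficients $b_k$, the same $f$ is a log resolution of $(X, (1+t)Z + \O_X(-mK_X))$ for every $t > 0$. Writing $f^{-1}(Z) = \sum c_j E_j$ over the prime divisors on $Y$ with $c_j \ge 0$, and letting $a_j$ denote the coefficient of $E_j$ in $K_{m,Y/X} - f^{-1}(Z)$, the coefficient of $E_j$ in $K_{m,Y/X} - (1+t)f^{-1}(Z)$ is $a_j - tc_j$. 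Since only finitely many $c_j$ are positive, a standard fractional-parts computation produces $\ep > 0$ such that $\lru a_j - tc_j\rru = \lru a_j \rru$ for every such $j$ and every $0 < t \le \ep$. For such $t$ I obtain $\J_m(X, (1+t)Z) = \J_m(X,Z) = \J(X,Z)$, and the desired inclusion follows from the tautological containment $\J_m(X, (1+t)Z) \subseteq \J(X, (1+t)Z)$.

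Both parts are mechanical once the framework of $\J_m$ is in place. The only real care needed is in (b), where the $m$ and $f$ must be chosen uniformly in $t$; this is arranged by fixing them from the $Z$-side and exploiting $\J_m(X,W) \subseteq \J(X,W)$ to avoid requiring $\J(X, (1+t)Z) = \J_m(X, (1+t)Z)$. The ceiling bound for $\ep$ is uniform because $f^{-1}(Z)$ has only finitely many prime components.
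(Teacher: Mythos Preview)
Your proof is correct and follows essentially the same approach as the paper: for (a) you fix a common $m$ with $\J(X,Z)=\J_m(X,Z)$ and $\J(X,Z')=\J_m(X,Z')$ and compare on a simultaneous log resolution, and for (b) you fix $m$ on the $Z$-side, observe $\J_m(X,(1+t)Z)=\J_m(X,Z)$ for $0<t\ll 1$, and then close up via $\J_m(X,(1+t)Z)\subseteq\J(X,(1+t)Z)\subseteq\J(X,Z)=\J_m(X,Z)$. The paper's proof is terser but structurally identical; your explicit fractional-parts computation and your remark that $f$ is independent of the coefficients simply flesh out what the paper leaves implicit.
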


\begin{proof}
We can fix $m$ such that $\J(X,Z) = \J_m(X,Z)$ and $\J(X,Z') = \J_m(X,Z')$.
The first property is then immediate from the definition of these
ideal sheaves. Regarding~(b), we observe that
$\J_m(X,Z) = \J_m(X,(1+t)Z)$ for all $0 < t \ll 1$. Thus the property
follows by the chain of inclusions
$$
\J_m(X,(1+t)Z) \subseteq \J(X,(1+t)Z) \subseteq \J(X,Z) = \J_m(X,Z),
$$
the second of which holding by part~(a).
\end{proof}

\begin{rmk}
One can define the jumping numbers of an effective pair $(X,Z)$
in a similar fashion as in the $\Q$-Gorenstein case, by declaring
that a number $\m > 0$ is a {\it jumping number} of an effective pair $(X,Z)$
if $\J(X,\l Z) \ne \J(X,\m Z)$ for all $0 \le \l < \m$.
It would be interesting to study the properties of these numbers.
For instance, is the set of jumping numbers of an effective pair
a discrete set of rational numbers?
\end{rmk}

\section{First properties and applications}

As we will see below, it turns out that multiplier ideals
(as defined in the previous section) can actually be
realized as multiplier ideals of suitable log pairs.
Using this fact, we will see that
the main features of the theory automatically extend to our setting.

\begin{defi}
Let $(X,Z)$ be an effective pair, and fix an integer $m \ge 2$.
Given a log resolution $f \colon Y \to X$ of $(X,Z+\O_X(-mK_X))$,
a boundary $\D$ on $X$ is said to be
{\it $m$-compatible} for $(X,Z)$ with respect to $f$ if:
\begin{enumerate}
\item[(i)]
$m\D$ is integral and $\lrd \D \rrd = 0$,
\item[(ii)]
no component of $\D$ is contained in the support of $Z$,
\item[(iii)]
$f$ is a log resolution for the log pair $((X,\D);Z+\O_X(-mK_X))$, and
\item[(iv)]
$K_{Y/X}^\D = K_{m,Y/X}$.
\end{enumerate}
The pair $(X,Z)$ is said to {\it admit $m$-compatible boundaries}
if there are $m$-compatible boundaries with respect
to any sufficiently high log resolution of $(X,\O_X(-mK_X) + Z)$.
\end{defi}

This definition is motivated by the following useful property.

\begin{prop}\label{prop:mult-ideal-for-compatible-boundaries}
If $(X,Z)$ is an effective pair and $m \ge 2$ is such that
$\J(X,Z) = \J_m(X,Z)$, then we have
$$
\J(X,Z) = \J((X,\D);Z)
$$
for any $m$-compatible boundary $\D$, where $\J((X,\D);Z)$ is the
multiplier ideal sheaf of the log pair $((X,\D);Z)$ as defined in
\cite[Definition~9.3.56]{Laz}.
\end{prop}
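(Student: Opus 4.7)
The plan is to evaluate both multiplier ideals on one cleverly chosen log resolution, use the $m$-compatibility to identify the relevant relative canonical divisors, and then observe that the auxiliary boundary term $\D_Y$ contributes nothing inside the round-up. Since $\D$ is $m$-compatible, I would fix a log resolution $f\colon Y \to X$ of $((X,\D);\, Z + \O_X(-mK_X))$ satisfying conditions (i)--(iv). Being in particular a log resolution of $(X,\, Z + \O_X(-mK_X))$, it computes $\J_m(X,Z)$ by Proposition~\ref{prop:J_m}, so under our hypothesis
\[
\J(X,Z) \;=\; \J_m(X,Z) \;=\; f_*\O_Y\big(\lru K_{m,Y/X} - f^{-1}(Z)\rru\big).
\]
On the other hand, since $K_X + \D$ is $\Q$-Cartier and $f$ is also a log resolution of $((X,\D);Z)$, the analogous independence-of-resolution statement for Lazarsfeld's multiplier ideals gives
\[
\J((X,\D);Z) \;=\; f_*\O_Y\big(\lru K_Y - f^*(K_X+\D) - f^{-1}(Z)\rru\big) \;=\; f_*\O_Y\big(\lru K_{Y/X}^\D - \D_Y - f^{-1}(Z)\rru\big).
\]
By condition (iv), $K_{Y/X}^\D = K_{m,Y/X}$, so the whole problem reduces to checking the equality of round-ups $\lru K_{m,Y/X} - \D_Y - f^{-1}(Z)\rru = \lru K_{m,Y/X} - f^{-1}(Z)\rru$.

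For this I would examine the two $\R$-divisors on each prime $E \subset Y$. Every component of $\D_Y$ is the strict transform of a component of $\D$, hence a non-exceptional prime of $f$. On any such $E$, setting $E' := f(E)$, a short local computation gives $v_E^\natural(mK_X) = m\cdot\mathrm{coeff}_{E'}(K_X)$: one can locally realize any prescribed valuation along the single prime $E'$ by a rational function, so the minimum in the definition of the natural valuation is attained at the coefficient of $mK_X$ along $E'$. Combined with $\mathrm{coeff}_{E'}(K_X) = \mathrm{coeff}_E(K_Y)$---which follows from $f_*K_Y = K_X$---this yields $\mathrm{coeff}_E(K_{m,Y/X}) = 0$ on every component of $\D_Y$. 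Conditions (ii) and (i) then give $\Supp(\D_Y) \cap \Supp(f^{-1}(Z)) = \emptyset$ and force the coefficients of $\D_Y$ to lie in $[0,1)$. Hence along $\Supp(\D_Y)$ the divisor $K_{m,Y/X} - \D_Y - f^{-1}(Z)$ has coefficients in $(-1,0]$, whose round-up is $0$---exactly matching that of $K_{m,Y/X} - f^{-1}(Z)$. Outside $\Supp(\D_Y)$ the two expressions are identical, so the round-ups agree and $\J(X,Z) = \J((X,\D);Z)$ follows.

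The only non-formal step I foresee is the valuation identity $v_E^\natural(mK_X) = m\cdot\mathrm{coeff}_{E'}(K_X)$ on non-exceptional primes, which makes $K_{m,Y/X}$ vanish on the components of $\D_Y$; everything else is bookkeeping with the definitions together with the independence of $\J_m$ and of $\J((X,\D);\,\cdot\,)$ from the chosen log resolution. Thus I expect this coefficient computation---essentially the statement that, for a Weil divisor $D$ and a non-exceptional valuation $v_E$, the natural valuation $v_E^\natural(D)$ simply reads off the coefficient of $D$ at $E'$---to be the main (and rather mild) obstacle in executing the proof.
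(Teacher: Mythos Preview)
Your proposal is correct and follows essentially the same route as the paper: compute both ideals on a common log resolution, invoke condition~(iv) to identify $K_{Y/X}^\D$ with $K_{m,Y/X}$, and check that the term $\D_Y$ does not affect the round-up because $\lrd\D\rrd=0$ and $\D$ shares no components with $Z$. The only remark is that the step you flag as the ``main obstacle'' is in fact immediate: for a non-exceptional prime $E$ on $Y$ with image $E'$ on $X$, the equality $\val_E^\natural(D)=\mathrm{coeff}_{E'}(D)$ is just the identity $\divisor(\O_X(-D))=D$ read along $E'$ (since $\val_E=\val_{E'}$), so $K_{m,Y/X}$ being $f$-exceptional follows directly from the definitions---or, if you prefer, from condition~(iv) together with the standard fact that $K_{Y/X}^\D$ is $f$-exceptional.
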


\begin{proof}
It suffices to observe that if $\D$ has no common components with $Z$
and $\lrd \D \rrd = 0$, then
$$
\J((X,\D);Z) = f_*\O_Y(\lru K_Y - g^*(K_X + \D) - f^{-1}(Z) \rru )
= f_*\O_Y(\lru K^\D_{Y/X} - f^{-1}(Z) \rru )
$$
for any log resolution $f$ of $((X,\D);Z)$.
\end{proof}

\begin{rmk}\label{rmk:comparison-mult-ideals}
In general, if $(X,Z)$ is an effective pair
and $\D$ is a boundary on $X$, then $\J((X,\D);Z) \subseteq \J(X,Z)$.
Indeed, if $m \ge 1$ such that $m(K_X + \D)$ is a Cartier divisor and
$\J(X,Z) = \J_m(X,Z)$, then the inclusion follows
from the last formula in Remark~\ref{rmk:comparison-K-with-K(Delta)}.
\end{rmk}

\begin{thm}\label{thm:existence-of-compatible-boundary}
Every effective pair $(X,Z)$ admits $m$-compatible boundaries
for any $m \ge 2$.
\end{thm}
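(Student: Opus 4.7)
The strategy is to construct the boundary $\D$ as $\tfrac{1}{m}D$ for a sufficiently general effective Weil divisor $D$ in a globally generated linear system on $X$ of the form $|\O_X(-mK_X + mA)|$ for some ample Cartier divisor $A$.

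Given $m \ge 2$ and any sufficiently high log resolution $f \colon Y \to X$ of $(X, Z + \O_X(-mK_X))$, I fix an ample Cartier divisor $A$ on $X$ satisfying two properties: first, $\O_X(-mK_X + mA) = \O_X(-mK_X) \otimes \O_X(mA)$ is globally generated (which holds for $A$ sufficiently ample, since $X$ is quasi-projective and $\O_X(-mK_X)$ is coherent); second, $\Supp(A)$ avoids the (codimension $\ge 2$) centers in $X$ of all $f$-exceptional divisors, so that $v_E(f^*A) = 0$ for every $f$-exceptional prime $E$ on $Y$. Let $D$ be a general member of $|\O_X(-mK_X + mA)|$. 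Since $D \sim mA - mK_X$, the divisor $m(K_X + \tfrac 1m D) = mK_X + D \sim mA$ is Cartier, so $\D := \tfrac 1m D$ is a boundary on $X$. By Bertini on $X$, a general $D$ is reduced and has no component contained in the proper closed subset $\Supp(Z)$, yielding (i) (noting $m \ge 2$) and (ii).

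For condition (iii), since $f$ log-resolves $\O_X(-mK_X)$ and $A$ is Cartier, $\O_X(-mK_X+mA)\.\O_Y$ equals the invertible sheaf $\O_Y(mf^*A - f^\natural(mK_X))$, and the pulled-back sections generate it. The induced linear system on $Y$ is therefore base-point free, and Bertini, applied to $Y$ and to each stratum of $\Ex(f) \cup E \cup \Supp(f^\natural(mK_X))$, guarantees that the zero divisor of a general section is smooth and meets this SNC divisor transversally. In particular, it has no $f$-exceptional component and equals the proper transform $D_Y$, so $\Ex(f) \cup E \cup \Supp(f^*(K_X + \D)) = \Ex(f) \cup E \cup \Supp(f^\natural(mK_X)) \cup \Supp(D_Y)$ has simple normal crossings, giving (iii).

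The main obstacle is (iv). By Remark~\ref{rmk:comparison-K-with-K(Delta)}, the identity $K_{Y/X}^\D = K_{m,Y/X}$ is equivalent to $\tfrac 1m f^\natural(-m\D) + \D_Y = 0$. Since $mK_X + D$ is Cartier, Lemma~\ref{lem:natural-val:additivity:special-case} gives $f^\natural(-D) = f^\natural(mK_X) - f^*(mK_X + D)$, so the identity reduces to $f^*(mK_X + D) = f^\natural(mK_X) + D_Y$. Writing $mK_X + D = mA + \divisor(\sigma)$ for $\sigma \in H^0(X, \O_X(-mK_X+mA))$ the section defining $D$, one gets $f^*(mK_X + D) = mf^*A + \divisor_Y(\sigma)$. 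I then compare coefficients prime by prime on $Y$: on non-exceptional divisors, the identity follows from Cartier additivity; on an exceptional prime $E$, my choice of $A$ forces $v_E(f^*A) = 0$, while base-point freeness of the pulled-back linear system yields $v_E(\sigma) = v_E^\natural(mK_X)$ for generic $\sigma$, matching precisely the coefficient of $E$ in $f^\natural(mK_X) + D_Y$ (noting $D_Y$ is non-exceptional). The delicate step is exactly this matching along exceptional divisors: it requires that a general section achieve the minimum possible valuation $v_E^\natural(mK_X)$ simultaneously at every $f$-exceptional $E$, with no excess vanishing, which is precisely what base-point freeness of the pulled-back linear system provides.
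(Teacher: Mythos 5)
Your proof is correct and follows essentially the same strategy as the paper's: take $\D$ to be $\tfrac 1m$ of a general divisor in a base-point-free linear system obtained by twisting $\O_X(-mK_X)$ to a globally generated sheaf, and verify condition (iv) by matching coefficients on $Y$, with base-point freeness of the pulled-back system supplying the exceptional coefficients (the paper phrases this by first choosing an effective $D$ with $K_X - D$ Cartier and then twisting $\O_X(-mD)$ by an invertible $\LL$, but this is the same construction). One small observation: the avoidance hypothesis on $A$ (that $\val_E(f^*A)=0$ for every $f$-exceptional $E$) is not actually needed, since by Lemma~\ref{lem:natural-val:additivity:special-case} a general section $\sigma$ satisfies $\val_E(\sigma) = \val_E(\O_X(-mK_X+mA)) = -m\val_E(A)+\val_E^\natural(mK_X)$, so the $A$-contributions cancel in $m\val_E(f^*A)+\val_E(\sigma)=\val_E^\natural(mK_X)$ regardless of $\val_E(A)$; this is why the paper's argument can take an arbitrary globally generated twist with no such restriction.
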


\begin{proof}
Let $D$ be an effective divisor such that $K_X - D$ is Cartier,
and let $f \colon Y \to X$ be a log resolution of
$(X,\O_X(-mK_X) + \O_X(-mD))$, and let $E = f^\natural(mD)$, so that
$\O_X(-mD)\.\O_Y = \O_Y(-E)$.
Since
$$
f^\natural(mK_X) = f^\natural(m(K_X - D)+mD) = m\.f^*(K_X - D) + f^\natural(mD),
$$
we have
$$
K_{m,Y/X} = K_Y - f^*(K_X - D) - \tfrac 1m E.
$$
Let $\LL$ be an invertible sheaf on $X$ such that $\LL \otimes \O_X(-mD)$
is globally generated, and let $G$ be a general element in the
linear system $\{L \in |\LL| \mid L - mD \ge 0 \}$.
Then $G = M + mD$ and $f^*G = M_Y + E$, where $M$ is an effective divisor
and $M_Y$ is its proper transform.
As $G$ varies, $M_Y$ moves in a base point free linear system.
In particular, we can assume that $M$
is a reduced divisor with no common components with $D$ or $Z$. We let
$$
\D := \tfrac 1m M.
$$
Note that $m\D$ is integral, $\lrd \D \rrd = 0$,
and $K_X + \D = K_X - D + \tfrac 1m G$ is $\Q$-Cartier.
Moreover, by choosing $G$ general, we can also assume that
$f$ is a log resolution for $((X,\D);\O_X(-mK_X))$.
The fact that $\D$ is $m$-compatible follows then by the computation
\begin{align*}
K^\D_{Y/X} &= K_Y + \D_Y - f^*(K_X + \D) \\
&= K_Y + \D_Y - f^*(K_X + \D - \tfrac 1m G ) - \tfrac 1m f^*G \\
&= K_Y  - f^*(K_X - D) - \tfrac 1m E.
\end{align*}
\end{proof}

We deduce the following fact. A posteriori, one can use this
corollary as the definition of $\J(X,Z)$.

\begin{cor}
For any effective pair $(X,Z)$, the set of ideal sheaves
$$
\{\J((X,\D);Z) \mid \text{$\D$ is a boundary on $X$}\}
$$
has a unique maximal element, namely $\J(X,Z)$.
\end{cor}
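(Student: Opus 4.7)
The plan is to assemble the statement from three ingredients already established: Remark~\ref{rmk:comparison-mult-ideals}, Proposition~\ref{prop:mult-ideal-for-compatible-boundaries}, and Theorem~\ref{thm:existence-of-compatible-boundary}. The point is that these results collectively show (i) $\J(X,Z)$ is an upper bound for the set, and (ii) $\J(X,Z)$ actually lies in the set, so it must be the unique maximal element.

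First I would verify the upper bound. For every boundary $\D$ on $X$, Remark~\ref{rmk:comparison-mult-ideals} gives the inclusion
\[
\J((X,\D);Z) \subseteq \J(X,Z),
\]
so $\J(X,Z)$ dominates every element of the set $\{\J((X,\D);Z)\}_\D$.

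Next I would show the bound is attained. By definition of $\J(X,Z)$ as the maximal element of $\{\J_m(X,Z)\}_{m\ge 1}$, there exists some $m \ge 2$ with $\J(X,Z) = \J_m(X,Z)$. Apply Theorem~\ref{thm:existence-of-compatible-boundary} to obtain an $m$-compatible boundary $\D$ for $(X,Z)$. Then Proposition~\ref{prop:mult-ideal-for-compatible-boundaries} gives
\[
\J((X,\D);Z) = \J(X,Z),
\]
which exhibits $\J(X,Z)$ as a member of the set.

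Combining both parts, $\J(X,Z)$ is a unique maximal element of the set $\{\J((X,\D);Z) \mid \D \text{ boundary on } X\}$. There is no substantive obstacle here: all the work has been done in the preceding results, and the corollary is a bookkeeping consequence; the only thing to be mindful of is choosing the same $m$ in both ``$\J(X,Z) = \J_m(X,Z)$'' and in the application of Theorem~\ref{thm:existence-of-compatible-boundary}, which is immediate since $m$-compatible boundaries are guaranteed for every $m \ge 2$.
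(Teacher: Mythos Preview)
Your argument is correct and is exactly the deduction the paper intends: the corollary is stated without an explicit proof, as an immediate consequence of Remark~\ref{rmk:comparison-mult-ideals}, Proposition~\ref{prop:mult-ideal-for-compatible-boundaries}, and Theorem~\ref{thm:existence-of-compatible-boundary}, and you have spelled out precisely those steps.
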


Using $m$-compatible boundaries, we obtain the following
generic restriction result.

\begin{prop}
Let $(X,Z)$ be an effective pair. If $H \subset X$ is a
general hyperplane section, then
$\J(X,Z) \. \O_H = \J(H,Z|_H)$.
\end{prop}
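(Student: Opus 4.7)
The plan is to reduce the statement to the classical generic restriction theorem for multiplier ideals of log pairs (cf.\ \cite{Laz}) by invoking the representation $\J(X,Z) = \J((X,\D);Z)$ furnished by an $m$-compatible boundary $\D$, and then showing that the restriction $\D|_H$ is itself $m$-compatible for $(H,Z|_H)$.

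First I would fix an integer $m \ge 2$ with $\J(X,Z) = \J_m(X,Z)$ and, applying Theorem~\ref{thm:existence-of-compatible-boundary}, choose an $m$-compatible boundary $\D$ for $(X,Z)$ with respect to a sufficiently high log resolution $f\colon Y \to X$ of $(X, Z + \O_X(-mK_X))$. Proposition~\ref{prop:mult-ideal-for-compatible-boundaries} then gives $\J(X,Z) = \J((X,\D);Z)$. For $H \subset X$ a general hyperplane section, $K_X+\D$ is $\Q$-Cartier, so the classical generic restriction theorem applies to the log pair $((X,\D);Z)$ and yields
$$
\J((X,\D);Z)\.\O_H = \J((H,\D|_H);Z|_H).
$$
To conclude, it suffices to identify the right-hand side with $\J(H,Z|_H)$; by Proposition~\ref{prop:mult-ideal-for-compatible-boundaries} this amounts to verifying that $\D|_H$ is $m$-compatible for $(H,Z|_H)$ with respect to the induced log resolution $\tilde f \colon \tilde H \to H$, where $\tilde H = f^{-1}(H)$.

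Conditions (i)--(iii) of $m$-compatibility on $H$ are immediate from the generality of $H$ together with Bertini's theorem: $\tilde H$ is smooth, $m(\D|_H)$ is integral with $\lrd \D|_H \rrd = 0$, no component of $\D|_H$ lies in $\Supp(Z|_H)$, and $\tilde f$ is a log resolution of $((H,\D|_H); Z|_H + \O_H(-mK_H))$. The substantive obstacle is condition (iv), namely the equality $K^{\D|_H}_{\tilde H/H} = K_{m,\tilde H/H}$. The left-hand side is handled by standard log adjunction for $\Q$-Cartier boundaries, giving $K^{\D}_{Y/X}\big|_{\tilde H} = K^{\D|_H}_{\tilde H/H}$. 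For the right-hand side, one needs $K_{m,Y/X}\big|_{\tilde H} = K_{m,\tilde H/H}$, which reduces to showing that the $\natural$-pullback of $mK_X$ commutes with generic restriction to $H$, modulo the adjunction correction $K_X|_H = K_H - H|_H$ (harmless since $mH$ is Cartier and therefore its $\natural$-pullback is linear and restricts naturally). This in turn follows from the fact that the fractional ideal sheaf $\O_X(-mK_X)\.\O_Y$, already invertible on $Y$ by choice of $f$, restricts to $\O_H(-mK_X|_H)\.\O_{\tilde H}$ for general $H$, so that $f^\natural(mK_X)|_{\tilde H} = \tilde f^\natural(mK_X|_H)$. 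Combining this with the $m$-compatibility identity $K^\D_{Y/X} = K_{m,Y/X}$ on $Y$ and restricting to $\tilde H$ yields condition (iv). The chain
$$
\J(X,Z)\.\O_H = \J((X,\D);Z)\.\O_H = \J((H,\D|_H);Z|_H) = \J(H,Z|_H)
$$
then closes the argument. The main technical work is thus confined to the adjunction-type computation for $\natural$-pullbacks along a general hyperplane section.
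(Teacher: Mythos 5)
Your overall strategy coincides with the paper's: fix $m$ with $\J(X,Z)=\J_m(X,Z)$, take an $m$-compatible boundary $\D$, apply the log $\Q$-Gorenstein generic restriction formula to get $\J((X,\D);Z)\.\O_H = \J((H,\D|_H);Z|_H)$, and then conclude by showing that $\D|_H$ is $m$-compatible for $(H,Z|_H)$, with condition (iv) being the real obstacle and the log-adjunction identity $K^\D_{Y/X}|_{\tilde H}=K^{\D|_H}_{\tilde H/H}$ handling the $\Q$-Cartier side. Up to that point you are on exactly the same track as the paper.

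The gap is in the sentence beginning ``This in turn follows from the fact that the fractional ideal sheaf $\O_X(-mK_X)\.\O_Y$ \dots\ restricts to $\O_H(-mK_X|_H)\.\O_{\tilde H}$.'' That ``fact'' is not free; it is precisely the hard point, because $\O_X(-mK_X)$ is a genuinely non-invertible reflexive sheaf (we are in the non--$\Q$-Gorenstein case), and the formation of the reflexive sheaf $\O_X(-D)$ does not in general commute with restriction to a hyperplane. What one must actually prove is an identity of the form $\O_X(-m(K_X+H))\.\O_H = \O_H(-mK_H)$ where $K_H := (K_X+H)|_H$. The paper establishes this after an essential preliminary normalization of the canonical divisor: it replaces $K_0$ by $K_X := K_0 - H + H_0$ with $H_0 \sim H$ general, so that $B := m(K_X+H) = mK_0 + mH_0$ is effective and transverse to $H$. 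It then lifts to a resolution $g\colon X'\to X$, takes the short exact sequence $0 \to \O_{X'}(-B')\otimes g^*\O_X(-H) \to \O_{X'}(-B') \to \O_{H'}(-B'|_{H'}) \to 0$, and uses that for $H$ general the map $R^1g_*\O_{X'}(-B')\otimes\O_X(-H)\to R^1g_*\O_{X'}(-B')$ is injective, yielding the surjection $\O_X(-B) \twoheadrightarrow \O_H(-B|_H)$. Your proposal asserts the conclusion of this computation without supplying either the renormalization of $K_X$ or the cohomological injectivity argument, so as written the central technical step of the proof is missing. Everything else in your proposal (conditions (i)--(iii), the adjunction on the $\Q$-Cartier part, and the final chain of equalities) is fine once that claim is justified.
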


\begin{proof}
If $\D$ is a boundary on $X$ with no common components with $H$, then
$\D|_H$ is a boundary on $H$, and
we have $\J((X,\D),Z) \. \O_H = \J((H,\D|_H),Z|_H)$
(cf. \cite[Example~9.5.9]{Laz}). Suppose that $\D$ is
a $m$-compatible boundary on $X$ for some $m$ sufficiently divisible.
By Remark~\ref{rmk:comparison-mult-ideals} applied on $H$, we see immediately
that $\J(X,Z) \. \O_H \subseteq \J(H,Z|_H)$. To get an equality,
we need to show that the restriction $\D|_H$ of $\D$ to $H$ is
also $m$-compatible, if $H$ is sufficiently general.

To this end, we fix a canonical divisor $K_0$ on $X$.
Working locally on $X$, we may assume that $K_0$ is effective.
Assume that $H$ is general with respect to
$Z$, $\D$ and $K_0$. Then we replace $K_0$ by $K_X := K_0 - H + H_0$,
where $H_0$ is another general hyperplane section linearly equivalent to $H$.
Note that $K_H := (K_X + H)|_H$ is a canonical divisor on $X$.

We claim that
\begin{equation}\label{eq:last:claim}
\O_X(-m(K_X + H)) \.\O_H = \O_H(-mK_H) \fall m \ge 0.
\end{equation}
For short, let $B := m(K_X + H) = mK_0 + mH_0$.
Note that $H$ has been chosen generally
with respect to $B$. Let $g \colon X' \to X$ be a resolution of singularities
of $X$. Let $B'$ be the proper transform of $B$.
We can assume that the pullback of $H$ to $X'$ coincides with
its proper transform $H'$, and moreover that $B'|_{H'}$
is the proper transform of $B|_H$. Note also that,
since $B$ is effective,
$g_*\O_{X'}(-B') = \O_X(-B)$, and similarly, $g_*\O_{H'}(-B'|_{H'}) = \O_H(-B|_H)$.
On $X'$ we have the exact sequence
$$
0 \to \O_{X'}(-B') \otimes g^*\O_X(-H) \to \O_{X'}(-B') \to \O_{H'}(-B'|_{H'}) \to 0.
$$
Since $H$ is generic, the map
$R^1g_*\O_{X'}(-B') \otimes \O_X(-H) \to R^1g_*\O_{X'}(-B')$
is injective. Therefore, taking direct images, we obtain a surjection
$\O_{X}(-B) \to \O_{H}(-B|_{H})$,
which shows that \eqref{eq:last:claim} holds.

We take a log resolution $f \colon Y \to X$ of $(X,Z + \O_X(-mK_X) + H)$.
Let $\~H$ be the proper transform of $H$. Since $\D$ is a $m$-compatible
boundary, we can assume that $K_{Y/X}^\D = K_{m,Y/X}$.
On the other hand, using \eqref{eq:last:claim} we see that
$$
K_{m,Y/X}|_{\~H} = K_{m,\~H/H}.
$$
Since adjunction in the log $\Q$-Gorenstein case implies that
$$
K_{Y/X}^\D|_{\~H} = K_{\~H/H}^{\D|_H},
$$
we conclude that $\D|_H$ is a $m$-compatible boundary for $(H,Z|_H)$
(the other defining conditions of $m$-compatible being easily verified).
This concludes the proof of the proposition.
\end{proof}

The existence of $m$-compatible boundaries allows us to
deduce immediately many other properties of multiplier ideal sheaves.
We start with Skoda's theorem, which extends to our setting
in a straightforward manner.

\begin{cor}
If $\a \subseteq \O_X$ be a non-zero ideal sheaf on an $n$-dimensional
normal variety $X$, then for every integer $m \ge n$
$$
\J(X,\a^m) = \a^{m+1-n}\.\J(X,\a^{n-1}).
$$
\end{cor}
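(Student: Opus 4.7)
The plan is to reduce the statement to the classical Skoda theorem for log $\Q$-Gorenstein pairs via the machinery of $m$-compatible boundaries. First I would fix an integer $m_0 \ge 2$, sufficiently divisible, so that simultaneously
$$
\J(X,\a^m) = \J_{m_0}(X,\a^m) \and \J(X,\a^{n-1}) = \J_{m_0}(X,\a^{n-1}).
$$
Such an $m_0$ exists by Proposition~\ref{prop:set-J_m-has-unique-max-element} together with Noetherianity of $X$: for each of the two pairs the chain $\J_k \subseteq \J_{kq}$ stabilizes, and a common sufficiently divisible index works for both.

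Next I would construct a single boundary $\D$ on $X$ that is $m_0$-compatible for both effective pairs $(X,\a^m)$ and $(X,\a^{n-1})$ with respect to a common log resolution $f \colon Y \to X$ of $(X, \a + \O_X(-m_0 K_X))$. This is possible because $\Supp(\a^m) = \Supp(\a^{n-1}) = V(\a)$: conditions (i) and (iv) in the definition of $m_0$-compatibility do not involve $Z$, while conditions (ii) and (iii) impose the same restrictions on $\D$ and on $f$ for both pairs. A generic choice of $\D$ as in the proof of Theorem~\ref{thm:existence-of-compatible-boundary}, carried out with respect to a log resolution of $(X, \a + \O_X(-m_0 K_X))$, produces such a common boundary.

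Having fixed $\D$, Proposition~\ref{prop:mult-ideal-for-compatible-boundaries} yields
$$
\J(X,\a^m) = \J((X,\D);\a^m) \and \J(X,\a^{n-1}) = \J((X,\D);\a^{n-1}).
$$
Since $(X,\D)$ is log $\Q$-Gorenstein, I would then invoke the classical Skoda theorem for log pairs (see \cite[Theorem~11.2.21 and the remarks on the log pair case]{Laz}) applied to $((X,\D);\a)$, which gives
$$
\J((X,\D);\a^m) = \a^{m+1-n}\.\J((X,\D);\a^{n-1}).
$$
Combining the three displays yields the desired identity. The main delicate point is verifying that a single boundary $\D$ can be chosen to be $m_0$-compatible for both pairs at once; as explained in the previous paragraph, this is immediate from inspection of the compatibility conditions, since the only ones that depend on $Z$ depend only on $\Supp(Z)$.
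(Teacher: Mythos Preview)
Your proof is correct and follows the same strategy as the paper, which simply says ``After fixing an $m$-compatible boundary, the result follows from \cite[Variation~9.6.39]{Laz}.'' Your version is more careful in two respects: you disambiguate the overloaded symbol $m$ by introducing $m_0$, and you explicitly justify that a \emph{single} boundary $\D$ can be chosen to be $m_0$-compatible for both $(X,\a^m)$ and $(X,\a^{n-1})$, a point the paper leaves implicit but which is indeed immediate from the fact that conditions~(ii) and~(iii) depend only on $\Supp(Z)$.
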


\begin{proof}
After fixing an $m$-compatible boundary, the result follows from
\cite[Variation~9.6.39]{Laz}.
\end{proof}

The main application, however, is the
following extension of Nadel's vanishing theorem
(\cite{EV1,Nad1,Nad2}; see also \cite[Section~9.4]{Laz}).

\begin{cor}\label{thm:vanishing}
Let $(X,Z)$ be an effective pair, where $X$ is a projective
normal variety and $Z = \sum a_k\.Z_k$.
Let $m \ge 2$ be an integer such that $\J(X,Z)=\J _m(X,Z)$,
and let $\D$ be an $m$-compatible boundary for $(X,Z)$.
For each $k$, let $B_k$ be a Cartier divisor
such that $\O_X(B_k)\otimes \I_{Z_k}$ is
globally generated, and suppose that $L$ is a Cartier divisor such that
$L-\big(K_X + \D + \sum a_kB_k\big)$ is nef and big. Then
$$
H^i\big(\O_X(L) \otimes \J(X,Z)\big) = 0 \for i > 0.
$$
\end{cor}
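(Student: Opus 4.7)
The strategy is to exploit the $m$-compatible boundary $\D$ to reduce the statement to the classical Nadel vanishing theorem for log $\Q$-Gorenstein pairs, where the result is already known. The work in the previous sections has been set up precisely so that this kind of reduction is possible, and at this stage no new estimates or resolutions need to be produced.

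Concretely, I would first invoke Proposition~\ref{prop:mult-ideal-for-compatible-boundaries}: the hypothesis $\J(X,Z) = \J_m(X,Z)$, combined with the fact that $\D$ is $m$-compatible for $(X,Z)$, gives the identification
$$
\J(X,Z) = \J((X,\D);Z).
$$
Since $\D$ is a boundary on $X$, the divisor $K_X + \D$ is $\Q$-Cartier, so the log pair $((X,\D);Z)$ falls within the usual log $\Q$-Gorenstein framework in which Nadel's theorem is stated.

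Next, I would apply the standard form of Nadel vanishing for multiplier ideals attached to formal linear combinations of subschemes on a log $\Q$-Gorenstein pair (see \cite[Section~9.4]{Laz}, in particular the variant for combinations $\sum a_k \. Z_k$ with the $\O_X(B_k)\otimes \I_{Z_k}$ globally generated). Under our hypotheses, $L - (K_X + \D + \sum a_k B_k)$ is nef and big, which is exactly what the theorem requires in order to conclude
$$
H^i\big(\O_X(L) \otimes \J((X,\D);Z)\big) = 0 \for i > 0.
$$
Combining this with the identification above yields the asserted vanishing for $\J(X,Z)$.

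There is essentially no serious obstacle at this step: the technical heart of the argument was Theorem~\ref{thm:existence-of-compatible-boundary} (existence of $m$-compatible boundaries) and Proposition~\ref{prop:mult-ideal-for-compatible-boundaries} (their use to realize $\J(X,Z)$ as a log-pair multiplier ideal). Once these tools are in hand, the vanishing theorem follows by black-boxing the classical Nadel statement, and the only thing to verify is that the numerical hypotheses on $L$ and the $B_k$ in our statement are literally those demanded by the log $\Q$-Gorenstein version.
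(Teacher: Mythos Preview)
Your proposal is correct and follows exactly the paper's own approach: invoke Proposition~\ref{prop:mult-ideal-for-compatible-boundaries} to identify $\J(X,Z)$ with $\J((X,\D);Z)$, and then apply the classical Nadel vanishing for log $\Q$-Gorenstein pairs (the paper cites \cite[Theorem~9.4.17]{Laz}).
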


\begin{proof}
It follows by Proposition~\ref{prop:mult-ideal-for-compatible-boundaries}
and \cite[Theorem~9.4.17]{Laz}.
\end{proof}

As in the log $\Q$-Gorenstein case
(cf. \cite[Section~9.4.E]{Laz}), one obtains the following.

\begin{cor}
With the same notation and assumptions as in
Corollary~\ref{thm:vanishing}, let $A$ be a
very ample Cartier divisor on $X$. Then the sheaf $\O_X(L + kA) \otimes \J(X,Z)$
is globally generated for every integer $k \ge \dim X +1$.
\end{cor}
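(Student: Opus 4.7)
The plan is to combine the Nadel-type vanishing from the previous corollary with Castelnuovo--Mumford regularity applied to the very ample divisor $A$.

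First, I would set $\FF := \O_X(L) \otimes \J(X,Z)$ and recall the Castelnuovo--Mumford regularity criterion: for a coherent sheaf $\FF$ on a projective variety and a very ample Cartier divisor $A$, if
$$
H^i\big(\FF \otimes \O_X((k-i)A)\big) = 0 \fall i \ge 1,
$$
then $\FF \otimes \O_X(kA)$ is globally generated (this is the standard consequence of Mumford's regularity lemma applied with the very ample $A$).

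Next, I would verify the vanishing hypothesis in the required range. For $1 \le i \le \dim X$ and $k \ge \dim X + 1$, we have $k - i \ge 1$, so $(k-i)A$ is ample. Since $L - (K_X + \D + \sum a_k B_k)$ is nef and big by assumption, adding the ample divisor $(k-i)A$ preserves nefness and bigness. Therefore $L + (k-i)A - (K_X + \D + \sum a_k B_k)$ is nef and big, and Corollary~\ref{thm:vanishing} applied to the Cartier divisor $L + (k-i)A$ in place of $L$ (note that $\D$ remains an $m$-compatible boundary and $m$ with $\J(X,Z) = \J_m(X,Z)$ is unchanged, since neither depends on $L$) yields
$$
H^i\big(\O_X(L + (k-i)A) \otimes \J(X,Z)\big) = 0 \for i \ge 1.
$$

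Finally, feeding these vanishings into the regularity criterion gives that $\FF \otimes \O_X(kA) = \O_X(L + kA) \otimes \J(X,Z)$ is globally generated, as desired. There is no real obstacle here: the entire argument is the standard deduction of global generation from Nadel vanishing via Castelnuovo--Mumford regularity, and the only thing to check is that the hypothesis ``nef and big'' is preserved when twisting $L$ by a positive multiple of the ample divisor $A$, which is automatic.
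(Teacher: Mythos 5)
Your proof is correct and follows exactly the argument the paper has in mind: the paper gives no explicit proof here but defers to the standard Castelnuovo--Mumford regularity deduction in \cite[Section~9.4.E]{Laz}, which is precisely what you reproduce (twist by $(k-i)A$, apply the Nadel vanishing of the previous corollary for $1\le i\le \dim X$, and note that $H^i$ vanishes for dimension reasons when $i>\dim X$). The only point worth making fully explicit is that last dimension-reasons vanishing, which you leave implicit when restricting to $i\le \dim X$.
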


The existence of $m$-compatible boundaries
also implies the following relative vanishing.

\begin{cor}\label{cor:relative-van}
Let $X$ be a normal quasi-projective variety.
Then for any integer $m \ge 2$ and every sufficiently high
log resolution $f\colon Y\to X$ of the pair $(X,\O_X(-mK_X))$ we have
$$
R^if_*\O_Y\big(\lru K_{m,Y/X}  \rru\big)=0 \for i > 0.
$$
\end{cor}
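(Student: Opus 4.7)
The plan is to reduce the statement to the standard local vanishing theorem for multiplier ideals of log $\Q$-Gorenstein pairs, by passing to an $m$-compatible boundary. The key observation is that on an $m$-compatible log resolution the limiting divisor $K_{m,Y/X}$ coincides with the log relative canonical divisor $K_{Y/X}^\D$ of a genuine log $\Q$-Gorenstein pair $(X,\D)$, so the classical machinery applies verbatim.

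First I would invoke Theorem~\ref{thm:existence-of-compatible-boundary} to produce, after possibly replacing the given $f\colon Y\to X$ by a sufficiently high log resolution (which is permitted by the phrasing of the corollary), an $m$-compatible boundary $\D$ on $X$ with respect to $f$. Condition~(iv) in the definition of $m$-compatible yields the identity $K_{Y/X}^\D = K_{m,Y/X}$, so the corollary reduces to
$$
R^if_*\O_Y\big(\lru K_{Y/X}^\D \rru\big) = 0 \for i > 0.
$$
Next I would observe that the hypotheses for the log $\Q$-Gorenstein local vanishing are all in place: $(X,\D)$ is a log variety (so $K_X+\D$ is $\Q$-Cartier by the very definition of boundary), $\lrd\D\rrd = 0$ by condition~(i), and $f$ is a log resolution of $((X,\D);0)$ by condition~(iii). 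Under these hypotheses, the required vanishing is exactly the local vanishing theorem for multiplier ideals of log $\Q$-Gorenstein pairs, e.g. as in \cite[Section~9.4.B]{Laz}.

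The argument is essentially immediate once Theorem~\ref{thm:existence-of-compatible-boundary} is at hand; there is no serious obstacle, since all of the substantive work—Kawamata--Viehweg vanishing, independence of the chosen resolution, and the rounding issues for $\R$-divisors with simple normal crossings support—has already been subsumed in the classical log $\Q$-Gorenstein statement. The role of the hypothesis that $f$ be \emph{sufficiently high} is precisely to guarantee that an $m$-compatible $\D$ exists with respect to $f$; this is the only nontrivial input.
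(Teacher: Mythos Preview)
Your proposal is correct and is precisely the argument the paper intends: the corollary is stated immediately after the sentence ``The existence of $m$-compatible boundaries also implies the following relative vanishing,'' with no further proof given. You have simply spelled out the implicit reduction---via Theorem~\ref{thm:existence-of-compatible-boundary} and condition~(iv) of $m$-compatibility---to the standard local vanishing for log $\Q$-Gorenstein pairs, which is exactly what the paper has in mind.
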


We close the section with a generalization of
Shokurov--Koll\'ar's connectedness lemma \cite{Ko0,Sho}.

\begin{cor}\label{cor:conn}
With the same notation and assumptions as in
Corollary~\ref{thm:vanishing}, let
$$
K_{m,Y/X}-f^{-1}(Z)=\sum e_iE_i=A-B
\ \ \text{where} \ A=\sum_{e_i>-1}e_iE_i.
$$
Assume that $\lru A\rru$ is exceptional,
(i.e., that all divisorial components of $Z$ appear with coefficient less than $1$).
Then $\Supp(B)$ is connected in a neighborhood of any fiber of $f$.
If moreover $B$ is irreducible and reduced, then $f(B)$ is normal.
\end{cor}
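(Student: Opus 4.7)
The plan is to reduce the statement to the classical Shokurov--Koll\'ar connectedness lemma for log $\Q$-Gorenstein pairs via an $m$-compatible boundary. By Theorem~\ref{thm:existence-of-compatible-boundary}, fix an $m$-compatible boundary $\D$ for $(X,Z)$ with respect to $f$. Property~(iv) of $m$-compatibility yields $K_{m,Y/X} = K_{Y/X}^\D$, so the given decomposition becomes
$$
A - B = K_Y + \D_Y - f^*(K_X + \D) - f^{-1}(Z).
$$
Property~(i) forces $\lrd \D \rrd = 0$, so $\D_Y$ has coefficients in $[0,1)$, while property~(ii) guarantees that $\D_Y$ has no common components with the divisorial part of $f^{-1}(Z)$. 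Consequently, the hypothesis that $\lru A \rru$ is exceptional translates verbatim into the standard hypothesis of the classical Shokurov--Koll\'ar connectedness theorem (cf.\ \cite{Ko0,Sho}) applied to the log $\Q$-Gorenstein pair $((X,\D); Z)$. That theorem then yields the desired connectedness of $\Supp(B)$ in a neighborhood of any fiber of $f$.

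For the normality statement, assume that $B$ is irreducible and reduced. The simple normal crossings condition forces $B$ to be a smooth prime divisor on $Y$, hence normal. Consider the short exact sequence
$$
0 \to \O_Y(\lru A \rru - B) \to \O_Y(\lru A \rru) \to \O_B(\lru A \rru|_B) \to 0.
$$
The vanishing $R^1 f_* \O_Y(\lru A \rru - B) = 0$ holds by relative Kawamata--Viehweg vanishing applied to the log $\Q$-Gorenstein pair $((X,\D); Z)$ (alternatively, it is a variant of the argument underlying Corollary~\ref{cor:relative-van} combined with Proposition~\ref{prop:mult-ideal-for-compatible-boundaries}). Taking direct images produces a surjection $f_*\O_Y(\lru A \rru) \twoheadrightarrow f_*\O_B(\lru A \rru|_B)$. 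Since $\lru A \rru$ is effective and $f$-exceptional, Lemma~\ref{lem:Fujita} gives $f_*\O_Y(\lru A \rru) = \O_X$. The natural injections $\O_{f(B)} \hookrightarrow f_*\O_B \hookrightarrow f_*\O_B(\lru A \rru|_B)$, together with the resulting surjection $\O_X \twoheadrightarrow f_*\O_B(\lru A \rru|_B)$ (which factors through $\O_{f(B)}$ since its target is supported on $f(B)$), force all these maps to be isomorphisms. In particular $f_*\O_B = \O_{f(B)}$, from which the normality of $f(B)$ follows.

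The main obstacle is verifying that inserting the boundary $\D$ does not spoil the exceptional condition on $\lru A \rru$ in translating to the log $\Q$-Gorenstein setting. This is precisely what conditions~(i) and~(ii) of $m$-compatibility are designed to control, and they are the essential input that allows the classical arguments to go through unchanged.
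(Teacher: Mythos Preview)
Your proof is correct and follows the same approach as the paper: reduce to the classical log $\Q$-Gorenstein Shokurov--Koll\'ar connectedness lemma via an $m$-compatible boundary (which, incidentally, is already part of the imported hypotheses from Corollary~\ref{thm:vanishing}, so invoking Theorem~\ref{thm:existence-of-compatible-boundary} is redundant but harmless). Your write-up is considerably more detailed than the paper's---which simply cites \cite[Theorem~7.4]{Kol} and \cite[Theorem~1.6]{Kaw4} after noting the reduction---and in particular spells out the exact-sequence argument for normality that the paper leaves to those references.
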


\begin{proof}
By Theorem~\ref{thm:existence-of-compatible-boundary} and
Proposition~\ref{prop:mult-ideal-for-compatible-boundaries},
we reduce to the log $\Q$-Gorenstein case,
where the result is well known (cf. \cite[Theorem~7.4]{Kol} or
\cite[Theorem~1.6]{Kaw4}).
\end{proof}

\section{Asymptotic constructions and adjoint ideal sheaves}\label{sect:adj-ideals}

This section is devoted to a discussion of
asymptotic multiplier ideal sheaves and adjoint ideal sheaves.
As in the last two sections, we work over an algebraically
closed field of characteristic zero.

Let $D$ be a divisor on a normal variety $X$, and for every $n \ge 1$
let $B_n \subset X$ denote the base scheme of the
linear system $|nD|$. We suppose that $|n_0D| \ne \emptyset$
for some $n_0  \ge 1$, and let $N = n_0\.\Z_+$. As in the usual case
(cf. \cite[Chapter~11]{Laz}),
for any given $c > 0$ the set of multiplier ideal sheaves
$\{\J(X,\frac cn\.B_n)\}_{n \in N}$
has a unique maximal element (which does not depend on the
choice of $n_0$).

\begin{defi}
The unique maximal element of $\{\J(X,\frac cn\.B_n)\}_{n \in N}$
is denoted by $\J(X,c\.\| D\|)$, and is called the
{\it asymptotic multiplier ideal sheaf} of $D$ with weight $c$.
\end{defi}

Note that if $m$ is sufficiently divisible and $\D$ is an $m$-compatible boundary
for $(X,B_{n_0})$, then $\J(X,\| D \|) = \J((X,\D);\| D \|)$.
We deduce the following property (cf. \cite[Theorem~11.1.8 and Remark~11.1.13]{Laz}).

\begin{prop}
With the above notation, suppose that $\J(X) = \O_X$
(i.e., $X$ is `log terminal', see
Definition~\ref{defi:lc-lt-sings} below).
Then $\J(X,\| D\|)$ contains the ideal sheaf of the base
scheme of $|D|$. In particular, we obtain
$$
H^0\big(\O_X(D)\.\J(X,\| D\|)\big) \cong H^0\big(\O_X(D)\big).
$$
\end{prop}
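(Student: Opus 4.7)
The plan is to reduce everything to the inclusion $\I_B \subseteq \J(X,\|D\|)$, from which the displayed isomorphism will follow by a sandwiching argument. Implicit in the statement is that $|D|\ne\emptyset$, so we may take $n_0=1$ and set $B:=B_1$. By the definition of $\J(X,\|D\|)$ as the unique maximum of the family $\{\J(X,\tfrac{1}{n}B_n)\}_{n\in N}$, we have $\J(X,\|D\|) \supseteq \J(X,B_1) = \J(X,B)$. The task therefore reduces to showing $\I_B \subseteq \J(X,B)$.

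For this, fix $m$ sufficiently divisible so that both $\J(X)=\J_m(X)$ and $\J(X,B)=\J_m(X,B)$, and choose a common log resolution $f\colon Y\to X$ of the pair $(X,B+\O_X(-mK_X))$. Writing $\I_B\cdot\O_Y=\O_Y(-E)$ with $E:=f^{-1}(B)$, any local section $\varphi$ of $\I_B$ satisfies $\varphi\cdot\O_Y\subseteq\O_Y(-E)$, whence
$$
\varphi\cdot\O_Y(\lceil K_{m,Y/X}\rceil) \;\subseteq\; \O_Y(\lceil K_{m,Y/X}\rceil - E).
$$
Pushing forward and invoking the projection formula gives $\varphi\cdot\J_m(X)\subseteq\J_m(X,B)$. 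The hypothesis $\J_m(X)=\J(X)=\O_X$ then forces $\varphi\in\J(X,B)$, establishing $\I_B\subseteq\J(X,B)\subseteq\J(X,\|D\|)$.

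For the cohomological assertion, the chain $\I_B\subseteq\J(X,\|D\|)\subseteq\J(X)=\O_X$ yields
$$
\O_X(D)\cdot\I_B \;\subseteq\; \O_X(D)\cdot\J(X,\|D\|) \;\subseteq\; \O_X(D)
$$
as sub-$\O_X$-modules of the sheaf of rational functions. Every $s\in H^0(\O_X(D))$ satisfies $s\cdot\O_X(-D)\subseteq\I_B$ by the very definition of the base ideal, so $s$ lies in $\O_X(D)\cdot\I_B$ locally on the Cartier locus of $D$; since $\O_X(D)$ is reflexive on the normal variety $X$ and the non-Cartier locus has codimension at least two, this inclusion extends globally. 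Sandwiching then gives $H^0(\O_X(D)\cdot\J(X,\|D\|))\cong H^0(\O_X(D))$.

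I expect the projection-formula computation in the middle paragraph to be the main step; the only nontrivial ingredient is the simultaneous computability of $\J_m(X)$ and $\J_m(X,B)$ on a single log resolution, which is available thanks to the resolution-independence of $\J_m$ established in Proposition~\ref{prop:J_m}. The subtlety in the final $H^0$ equality, arising when $D$ is not Cartier, is absorbed by the reflexivity of $\O_X(D)$ in codimension two.
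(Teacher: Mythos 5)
Your proof of the inclusion $\I_B \subseteq \J(X,\|D\|)$ is correct and takes a genuinely different route from the paper. The paper reduces to the log $\Q$-Gorenstein case by choosing an $m$-compatible boundary $\D$ and invoking \cite[Theorem~11.1.8 and Remark~11.1.13]{Laz}; you instead compute directly on a single log resolution, showing $\varphi\cdot\J_m(X)\subseteq\J_m(X,B)$ for $\varphi\in\I_B$ and then using the hypothesis $\J_m(X)=\O_X$. That is a clean, self-contained argument and is a nice alternative.

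However, the last paragraph has a genuine gap. You reduce the displayed isomorphism to the claim $H^0(\O_X(D)\cdot\I_B)=H^0(\O_X(D))$, and you try to justify it by observing that $s\cdot\O_X(-D)\subseteq\I_B$ gives the containment on the Cartier locus of $D$, and then appealing to reflexivity of $\O_X(D)$ to extend across the codimension-two non-Cartier locus. This does not work: the sheaf you need to be reflexive (equivalently, $S_2$) for the extension is the \emph{product} $\O_X(D)\cdot\I_B$, not $\O_X(D)$ itself, and that product is in general not reflexive. Concretely, if $X$ is the quadric cone $\{xy=z^2\}$, $D$ a ruling, and $s=1\in H^0(\O_X(D))$, then $\I_B=\mm$ (the maximal ideal at the vertex), and a direct computation shows $1\notin\O_X(D)\cdot\mm$ at the vertex, even though $1$ is a local section away from the vertex. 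So $H^0(\O_X(D)\cdot\I_B)\subsetneq H^0(\O_X(D))$, and the sandwiching argument collapses. (The Proposition itself survives this example because there $\J(X,\|D\|)=\O_X\supsetneq\I_B$; the failure is precisely in replacing $\J(X,\|D\|)$ by the smaller $\I_B$.) A correct argument for the $H^0$ statement must work with $\J(X,\|D\|)$ directly on the resolution---for instance by pulling back a section $s$, using that $\divisor(f^*s)\ge -\tfrac1p f^{\natural}(pD)$ and that $f^{-1}(B_p)$ is absorbed into the rounding of $K_{m,Y/X}-\tfrac1p f^{-1}(B_p)$---or, as the paper does, by passing to an $m$-compatible boundary and citing the Cartier-divisor version from \cite{Laz}.
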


We next define the adjoint ideal sheaf of
an effective pair $(X,Z)$ along an effective Cartier divisor $H$.
We fix a log resolution $f \colon Y \to X$ of
$(X,Z + \O_X(-mK_X))$ such that all
components of the proper transform $H_Y$ of $H$ on $Y$ are disconnected;
if $\D$ is a given boundary on $X$, then we also
suppose that $f$ is a log resolution of the log pair $((X,\D);Z + \O_X(-mH))$.
Then we consider the ideal sheaf
$$
\adj_{m,H}(X,Z)
:= f_*\O_Y\big(\lru K_{m,Y/X} - f^{-1}(Z) - f^*H + H_Y\rru\big).
$$
Again, one can check that
$\adj_{m,H}(X,Z)$ is a (coherent) sheaf of ideals on $X$, that its definition
is independent of the choice of $f$
and that the set of ideal sheaves $\{\adj_{m,H}(X,Z)\}_{m \ge 1}$
has a unique maximal element.

\begin{defi}
The maximal element of $\{\adj_{m,H}(X,Z)\}_{m \ge 1}$ is called the
{\it adjoint ideal sheaf} of the pair $(X,Z)$ along $H$,
and is denoted by $\adj_H(X,Z)$.
\end{defi}

\begin{rmk}
If $\D$ is an $m$-compatible boundary for some $m$ sufficiently divisible,
then $\adj_H(X,Z) = \adj_H((X,\D);Z)$.
\end{rmk}

\begin{prop}\label{prop:adjoint-exact-seq}
Suppose that $H$ is a normal Cartier divisor on $X$
with no components contained in the support of $Z$, and let
$\D$ be an $m$-compatible boundary for $(X,Z + H)$ for a
sufficiently divisible $m$.
Then the adjoint ideal $\adj_H(X,Z)$ sits in the exact sequence
$$
0 \to \J(X,Z)\otimes \O_X(-H) \to \adj_H(X,Z) \to \J((H,\D|_H);Z|_H) \to 0.
$$
\end{prop}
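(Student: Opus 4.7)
The plan is to reduce the statement to the analogous short exact sequence for the log $\Q$-Gorenstein pair $((X,\D);Z)$ along $H$, which is a standard consequence of Nadel vanishing applied to the restriction sequence on a log resolution (see for instance \cite[Section~9.3.E]{Laz}). The reduction will proceed by identifying each of the three sheaves in the displayed sequence with its log $\Q$-Gorenstein counterpart.

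To begin, I would enlarge $m$ (keeping it sufficiently divisible and retaining the $m$-compatibility of $\D$ for $(X,Z+H)$) so that both maxima
$$
\J(X,Z)=\J_m(X,Z) \and \adj_H(X,Z)=\adj_{m,H}(X,Z)
$$
are attained. Note that an $m$-compatible boundary for $(X,Z+H)$ is, \emph{a fortiori}, $m$-compatible for $(X,Z)$, since all four defining conditions weaken under removal of $H$ from the data. Proposition~\ref{prop:mult-ideal-for-compatible-boundaries} therefore yields $\J(X,Z)=\J((X,\D);Z)$. The analogous identification $\adj_H(X,Z)=\adj_H((X,\D);Z)$ is an immediate consequence of condition~(iv) in the definition of $m$-compatibility: on any log resolution $f\colon Y\to X$ of $((X,\D);Z+H+\O_X(-mK_X))$ (chosen high enough to also disconnect the components of $H_Y$, as demanded in the definition of $\adj_{m,H}$), both sides are computed by the same formula
$$
f_*\O_Y\bigl(\lru K^{\D}_{Y/X}-f^{-1}(Z)-f^*H+H_Y\rru\bigr),
$$
because $K_{m,Y/X}=K^\D_{Y/X}$ on such $Y$.

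Having made these identifications, I would invoke the standard log $\Q$-Gorenstein adjoint short exact sequence for the log pair $((X,\D);Z)$ along $H$. The hypotheses needed there, namely that $H$ is a normal Cartier divisor not contained in the support of $\D$ or of $Z$, are guaranteed respectively by condition~(ii) of $m$-compatibility for $(X,Z+H)$ (which forbids $H$ as a component of $\D$) and by the assumption on $Z$. This produces the sequence
$$
0\to\J((X,\D);Z)\otimes\O_X(-H)\to\adj_H((X,\D);Z)\to\J((H,\D|_H);Z|_H)\to 0,
$$
which becomes the desired one after substituting the identifications above. The only potential obstacle is ensuring that every comparison is performed on a single common log resolution, but this is routine once $f\colon Y\to X$ is chosen high enough. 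I emphasize that no attempt is made to replace the right-hand term by $\J(H,Z|_H)$: unlike in the case of a general hyperplane section, there is no reason for $\D|_H$ to be $m$-compatible for $(H,Z|_H)$ here, as the genericity argument of the previous proposition (which ensured $\O_X(-m(K_X+H))\cdot\O_H=\O_H(-mK_H)$) is unavailable for the fixed divisor $H$.
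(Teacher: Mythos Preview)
Your proposal is correct and follows essentially the same approach as the paper: both reduce to the known log $\Q$-Gorenstein adjoint exact sequence by using the $m$-compatible boundary $\D$ to identify $\J(X,Z)=\J((X,\D);Z)$ and $\adj_H(X,Z)=\adj_H((X,\D);Z)$. The paper's proof is terser (it simply cites the log $\Q$-Gorenstein case via \cite[Proposition~2.4]{Tak}), while you spell out the role of conditions~(ii) and~(iv) and note that $m$-compatibility for $(X,Z+H)$ implies it for $(X,Z)$; your remark about ``enlarging $m$'' is unnecessary, since ``sufficiently divisible'' in the statement should already be read to include these requirements.
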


\begin{proof}
If $\D$ is an $m$-compatible boundary for $(X,Z + H)$ for a
sufficiently divisible $m$, then $\J(X,Z) = \J((X,\D);Z)$ and
$\adj_H(X,Z) = \adj_H((X,\D);Z)$. Therefore the result
follows from the log $\Q$-Gorenstein case, in which case it is well known
(see, for example, the arguments in \cite[Proposition~2.4]{Tak}).
\end{proof}

\begin{rmk}
One can try to apply adjunction directly, without adding the boundary divisor,
by fixing a canonical divisor $K_X$ on $X$ such that $K_X + H$
had order zero along the components of $H$.
Then $K_H := (K_X + H)|_H$ is a canonical divisor on $H$
(cf. Remark~5.47 in \cite{KM}).
However, $\O_X(-m(K_X+H))\.\O_H$ may in general be strictly
contained in $\O_H(-mK_H)$ if $K_X + H$ is not $\Q$-Cartier.
This reflects the fact that in general, no matter how one chooses
$\D$ on $X$, $\J((H,\D|_H);Z|_H)$ may be strictly smaller than $\J(H,Z|_H)$,
as it happens in the following example.
\end{rmk}

\begin{eg}
As in \cite[Example~4.3]{Kaw3},
we consider an extremal flipping contraction $\phi \colon X' \to X$
on a normal $\Q$-factorial threefold $X'$ with terminal singularities.
We assume that $X$ is affine, and let $0 \in X$ be the image of the
exceptional locus of $\f$. Let $H \subset X$ be a general hyperplane section
through $0$, and let $H' = f^{-1}(H) \subset X'$.
We assume that $H$ and $H'$ are normal $\Q$-factorial surfaces
with log terminal singularities (this is the case, for instance,
if $\phi$ is the contraction in Francia's flip \cite{Fra}). Note that
$\f$ restricts to a divisorial contraction $\ff \colon H'  \to H$.
Let $C$ be an irreducible component of $\Ex(\ff)$.
Let $\D$ be any boundary on $X$ not containing $H$ in its support,
and let $\D'$ be its proper transform on $X'$.
Then $\D' \. C = -K_{X'}\.C > 0$.
It follows that $\val_C(\D|_H)$
is positive and independent of the choice of $\D$.
This implies that there is a $\d > 0$, independent of $\D$, such that
if $Z = \{0\} \subset H$, then
$\lc((H,\D|_H);Z) \le \lc(H,Z) - \d$.
Therefore we can fix $c > 0$, independent of $\D$, such that
$\J((H,\D|_H);cZ) \ne \O_H$ but $\J(H,cZ)=\O_H$.
\end{eg}

We immediately obtain the following inversion of adjunction statement.

\begin{cor}\label{cor:inversion-of-adj}
In the same assumptions as in Proposition~\ref{prop:adjoint-exact-seq}, we have
$\adj_H(X,Z)=\O_X$ in a neighborhood of $H$
if and only if $\J((H,\D|_H);Z|_H)=\O _H$.
\end{cor}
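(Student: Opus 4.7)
The plan is to derive both implications directly from the short exact sequence established in Proposition~\ref{prop:adjoint-exact-seq}, namely
$$
0 \to \J(X,Z) \otimes \O_X(-H) \to \adj_H(X,Z) \to \J((H,\D|_H); Z|_H) \to 0.
$$
First I would observe, by inspecting the construction of this sequence via the log resolution used to define $\adj_H(X,Z)$, that the right-hand surjection factors as the inclusion $\adj_H(X,Z) \subseteq \O_X$ followed by restriction $\O_X \twoheadrightarrow \O_H$. Consequently, at every point $p \in H$ the stalk $\J((H,\D|_H); Z|_H)_p$ coincides with the image of $\adj_H(X,Z)_p$ inside $\O_{H,p}$.

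For the forward implication, if $\adj_H(X,Z) = \O_X$ on some open neighborhood of $H$, then on that neighborhood the image of the restriction map is all of $\O_H$, hence $\J((H,\D|_H); Z|_H) = \O_H$.

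For the reverse implication, suppose $\J((H,\D|_H); Z|_H) = \O_H$. Fix $p \in H$ and write $I_{H,p} \subset \O_{X,p}$ for the stalk of $\O_X(-H)$. Surjectivity on stalks of the restriction map yields $\adj_H(X,Z)_p + I_{H,p} = \O_{X,p}$. Since $I_{H,p} \subseteq \mathfrak{m}_{X,p}$, the ideal $\adj_H(X,Z)_p$ cannot be contained in $\mathfrak{m}_{X,p}$, and therefore equals $\O_{X,p}$. Coherence of $\adj_H(X,Z)$ then spreads this equality to a Zariski neighborhood of $p$, and taking the union over $p \in H$ produces the desired neighborhood of $H$ on which $\adj_H(X,Z) = \O_X$.

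I do not anticipate a genuine obstacle here: essentially all of the substantive work has already been invested in Proposition~\ref{prop:adjoint-exact-seq}, and what remains is a routine local-ring argument together with the obvious interpretation of the right-hand map. The one point that deserves explicit verification is that this right-hand map really is the restriction morphism (as opposed to some twisted variant), but this is transparent from the definition of $\adj_H(X,Z)$ as $f_*\O_Y(\lru K_{m,Y/X} - f^{-1}(Z) - f^*H + H_Y \rru)$ and the derivation of the sequence.
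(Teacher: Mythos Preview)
Your proposal is correct and is exactly the argument the paper has in mind: the corollary is stated without proof, as an immediate consequence of the exact sequence in Proposition~\ref{prop:adjoint-exact-seq}, and what you have written is the standard Nakayama-type verification that makes ``immediate'' precise. There is nothing to add.
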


\begin{rmk}\label{rmk:KM-Thm5.50}
The corollary above should be compared with the following
well known statement: If $S\subset X$ is a normal Cartier divisor
(in fact, it suffices to assume that $S$ is
Cartier in codimension $2$) on a
normal variety and $B$ is an effective divisor such that $K_X+S+B$ is
$\Q$-Cartier, then $(X,S+B)$ is purely log terminal
with respect to $S$ if and only if $(S,B|_S)$ is Kawamata log terminal
(see for example \cite[Theorem~5.50]{KM}).
\end{rmk}

We also obtain the following vanishing theorem for adjoint ideals.

\begin{cor}\label{cor:vanishing-for-adj-ideals}
With the same assumptions as in Corollary~\ref{thm:vanishing},
let $H$ be a general hyperplane section of $X$. Then
$$
H^i\big(\O_X(L + H) \otimes \adj_H(X,Z)\big) = 0 \for i > 0.
$$
\end{cor}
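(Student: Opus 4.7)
The plan is to apply Nadel vanishing (Corollary~\ref{thm:vanishing}) on $X$ and on $H$ to the short exact sequence produced by Proposition~\ref{prop:adjoint-exact-seq}. Choose $m \ge 2$ sufficiently divisible so that $\J(X,Z) = \J_m(X,Z)$, and by Theorem~\ref{thm:existence-of-compatible-boundary} pick a boundary $\D$ that is $m$-compatible for $(X, Z + H)$ with respect to a suitable log resolution $f \colon Y \to X$. This $\D$ is then automatically $m$-compatible for $(X,Z)$ and, by the generic restriction argument established in the proof of the preceding generic restriction proposition, $\D|_H$ is an $m$-compatible boundary for $(H, Z|_H)$ provided $H$ is chosen generally with respect to $\D$, $Z$ and a chosen canonical divisor.

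Tensoring the adjoint exact sequence of Proposition~\ref{prop:adjoint-exact-seq} by $\O_X(L+H)$ gives
\begin{equation*}
0 \to \J(X,Z) \otimes \O_X(L) \to \adj_H(X,Z) \otimes \O_X(L+H) \to \J((H,\D|_H); Z|_H) \otimes \O_H((L+H)|_H) \to 0.
\end{equation*}
The vanishing $H^i\big(\O_X(L) \otimes \J(X,Z)\big) = 0$ for $i > 0$ is exactly the conclusion of Corollary~\ref{thm:vanishing}. For the third term, I would invoke adjunction in the form $K_H = (K_X + H)|_H$ (valid for $H$ general), which yields
\begin{equation*}
(L+H)|_H - \bigl(K_H + \D|_H + \textstyle\sum a_k B_k|_H\bigr) = \bigl(L - K_X - \D - \textstyle\sum a_k B_k\bigr)\big|_H.
\end{equation*}
Since the right-hand side is the restriction of a nef and big divisor to a general very ample divisor, it remains nef and big (using a Kodaira-lemma decomposition $L - K_X - \D - \sum a_k B_k \equiv A + E$ with $A$ ample and $E$ effective avoided by $H$). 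The sheaves $\O_H(B_k|_H) \otimes \I_{Z_k|_H}$ are globally generated by restriction, so all hypotheses of Corollary~\ref{thm:vanishing} hold on $H$ with the boundary $\D|_H$, giving the vanishing of the third term's higher cohomology. The long exact sequence then yields the desired conclusion.

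The main obstacle is organizing the choice of $\D$ so that it simultaneously (a) serves as an $m$-compatible boundary for computing $\J(X,Z)$ on $X$ and making Corollary~\ref{thm:vanishing} applicable there, (b) is compatible with $H$ in the sense needed by Proposition~\ref{prop:adjoint-exact-seq} (so that the adjoint exact sequence is available), and (c) restricts to an $m$-compatible boundary on $H$ so that Nadel vanishing can be applied to the restricted multiplier ideal. All three conditions are handled uniformly by picking $\D$ $m$-compatible for $(X, Z+H)$ for a single sufficiently divisible $m$, exploiting the generic choice of $H$ to transfer compatibility and the nef-big hypothesis to the hyperplane section.
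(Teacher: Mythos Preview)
Your proof is correct and follows essentially the same route as the paper: tensor the adjoint exact sequence of Proposition~\ref{prop:adjoint-exact-seq} by $\O_X(L+H)$, apply Corollary~\ref{thm:vanishing} on $X$ to kill the first term, use adjunction and restriction of the nef/big class to $H$ to kill the third term, and conclude from the long exact sequence. The only difference is that you work harder than necessary on $H$: you arrange for $\D|_H$ to be an $m$-compatible boundary via the generic restriction argument, whereas the paper simply observes that $\D|_H$ is a boundary (since $K_H + \D|_H = (K_X + H + \D)|_H$ is $\Q$-Cartier) and applies the classical log $\Q$-Gorenstein Nadel vanishing directly to $\J((H,\D|_H);Z|_H)$, which is exactly the sheaf appearing in the exact sequence. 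Your extra step is harmless but not needed.
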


\begin{proof}
We have
$$
H^i\big(\O_X(L) \otimes \J(X,Z)\big) = 0 \for i > 0
$$
by Corollary~\ref{thm:vanishing}.
Observe that the restriction $\D|_H$ of $\D$ to $H$ is
a boundary on $H$. Moreover, the sheaves
$\O_H(B_k|_H) \otimes \I_{Z_k|_H}$ are globally generated, and
$(L-(K_X+\Delta + \sum a_kB_k))|_H$ is nef and big.
By adjunction, this implies that
$$
(L + H)|_H - \big(K_H + \D|_H + \sum a_kB_k|_H\big)
$$
is nef and big, and hence we have
$$
H^i\big(\O_H(L+H) \otimes \J((H,\D|_H),Z|_H)\big) = 0 \for i > 0.
$$
The assertion then follows by Proposition~\ref{prop:adjoint-exact-seq}.
\end{proof}

\begin{rmk}
In fact it suffices to assume that $H$ is a normal
Cartier divisor that is not contained in the augmented base locus
of $L-(K_X+\Delta + \sum a_kB_k)$.
\end{rmk}

\section{Log terminal and log canonical singularities}

In this section we extend the definitions of log terminal
and log canonical singularities of pairs to the general setting,
and discuss some generalizations to this context of
certain results on rational and log terminal
singularities due, respectively, to Elkik and Kawamata.

Let $(X,Z)$ be an effective pair over
and algebraically closed field of characteristic zero.

\begin{defi}\label{defi:lc-lt-sings}
Let $X' \to X$ be a proper birational morphism with $X'$ normal,
and let $F$ be a prime divisor on $X'$.
For any integer $m \ge 1$, we define the
{\it $m$-th limiting log discrepancy} of $(X,Z)$ to be
$$
a_{m,F}(X,Z) := \ord_F(K_{m,X'/X}) + 1 - \val_F(Z).
$$
The pair $(X,Z)$ is said to be
{\it log canonical} (resp., {\it log terminal}) if
there is an integer $m_0$ such that
$a_{m,F}(X,Z) \ge 0$ (resp., $> 0$) for every prime divisor $F$ over $X$
and $m=m_0$ (and hence for any positive multiple $m$ of $m_0$).
$(X,Z)$ is said to be {\it strictly log canonical} if it is log canonical
but not log terminal.
If $X$ is log terminal, then the {\it log canonical threshold}
of $(X,Z)$ is
$$
\lc(X,Z) := \sup \{ t > 0 \mid \text{$(X,tZ)$ is log terminal} \}.
$$
\end{defi}

Clearly these notions coincide with the usual ones when $X$ is $\Q$-Gorenstein,
and in general, if $(X,Z)$ is log terminal, then it is log canonical.

\begin{prop}\label{prop:lt-lc-reduction-to-log-pairs}
An effective pair $(X,Z)$ is log canonical (resp., log terminal)
if and only if there is a boundary $\D$
such that $((X,\D); Z)$ is log canonical (resp., log terminal).
\end{prop}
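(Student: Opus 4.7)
The plan is to prove the two implications separately: the ``only if'' direction will use the existence of $m$-compatible boundaries (Theorem~\ref{thm:existence-of-compatible-boundary}), and the ``if'' direction will use the inequality $K_{Y/X}^\D \le K_{m,Y/X}$ from the final assertion of Remark~\ref{rmk:comparison-K-with-K(Delta)}.

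For the ``only if'' direction, suppose $(X,Z)$ is log canonical. Passing to a multiple if necessary, pick $m_0 \ge 2$ such that $a_{m_0,F}(X,Z) \ge 0$ for every prime divisor $F$ over $X$. By Theorem~\ref{thm:existence-of-compatible-boundary}, choose a sufficiently high log resolution $f \colon Y \to X$ of $(X, \O_X(-m_0 K_X)+Z)$ together with an $m_0$-compatible boundary $\D$ for $(X,Z)$ with respect to $f$. The compatibility condition $K_{Y/X}^\D = K_{m_0,Y/X}$ identifies the coefficients of $K_{Y/X}^\D - f^{-1}(Z)$ on $Y$ with those of $K_{m_0,Y/X} - f^{-1}(Z)$, which are all $\ge -1$ by hypothesis. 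Since $\lrd \D \rrd = 0$ and $K_X + \D$ is $\Q$-Cartier, $((X,\D);Z)$ is a log $\Q$-Gorenstein pair, and its log canonicity can be verified on any single log resolution; $Y$ does the job. The log terminal case follows by replacing every ``$\ge -1$'' by ``$> -1$'' throughout.

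For the ``if'' direction, suppose $\D$ is a boundary such that $((X,\D);Z)$ is log canonical, and pick $m \ge 1$ with $m(K_X + \D)$ Cartier. The inequality $K_{Y/X}^\D \le K_{m,Y/X}$ on any log resolution $f \colon Y \to X$ yields $a_F((X,\D);Z) \le a_{m,F}(X,Z)$ for every prime divisor $F$ on $Y$, and since every prime divisor over $X$ appears on some sufficiently high such $Y$, this holds for all $F$ over $X$. The hypothesis forces the left-hand side to be $\ge 0$, hence $a_{m,F}(X,Z) \ge 0$ for all $F$ over $X$, i.e., $(X,Z)$ is log canonical. The log terminal case is identical with strict inequalities throughout.

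The only step requiring care is the standard claim, used in the ``only if'' direction, that log canonicity (and log terminality) of a log $\Q$-Gorenstein pair can be checked on any single log resolution. The usual argument---comparing two log resolutions through a common dominating one and using the functoriality $(fg)^*(K_X+\D) = g^*f^*(K_X+\D)$ guaranteed by the $\Q$-Cartierness of $K_X + \D$---applies verbatim, so I expect no genuine obstacle here.
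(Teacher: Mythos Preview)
Your proof is correct and follows essentially the same approach as the paper: the ``if'' direction via the inequality $K_{Y/X}^\D \le K_{m,Y/X}$ from Remark~\ref{rmk:comparison-K-with-K(Delta)}, and the ``only if'' direction via the existence of $m$-compatible boundaries from Theorem~\ref{thm:existence-of-compatible-boundary}. The only cosmetic difference is that you invoke the standard fact that log canonicity of a log $\Q$-Gorenstein pair is detected on a single log resolution, whereas the paper phrases this by allowing the resolution to vary so as to see any given $F$; both are equivalent (note that the actual discrepancy condition involves $K_{Y/X}^\D - \D_Y - f^{-1}(Z)$, and your hypothesis $\lrd \D \rrd = 0$ together with condition~(ii) is exactly what handles the $\D_Y$ term, as you indicate).
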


\begin{proof}
If there is a boundary $\D$
such that $((X,\D); Z)$ is log canonical (resp., log terminal),
then it follows by Remark~\ref{rmk:comparison-K-with-K(Delta)}
that $(X,Z)$ is log canonical (resp., log terminal).
Conversely, assume that $(X,Z)$ is log canonical (resp., log terminal), and
let $m_0$ be as in Definition~\ref{defi:lc-lt-sings}.
By Theorem~\ref{thm:existence-of-compatible-boundary}, there is
an $m_0$-compatible boundary $\D$ for $(X,Z)$. Given any prime divisor $F$
over $X$, we can assume that $F$ is a divisor over a sufficiently high
log resolution $Y$ of $(X,Z + \O_X(-m_0K_X))$.
Then $K^\D_{Y/X} = K_{m_0,Y/X}$, and hence
$a_{m_0}((X,\D);Z) = a_{m_0}(X,Z)$. It follows that $((X,\D);Z)$
is log canonical (resp., log terminal).
\end{proof}

The next corollary shows the
relation between our notion of log canonical singularities and Nakayama's
notion of {\it admissible singularities}
(see \cite[Defintion~VII.1.2]{Nak}); we are grateful to Hara, Schwede and Takagi
for bringing Nakayama's notion to our attention.

\begin{cor}
An effective pair $(X,Z)$ is log terminal if and only if
it has admissible singularities in the sense of Nakayama.
\end{cor}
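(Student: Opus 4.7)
The plan is to identify Nakayama's notion of admissible singularities with the right-hand side of Proposition~\ref{prop:lt-lc-reduction-to-log-pairs}, and then the corollary will follow as an immediate restatement. Recall that, according to Definition~VII.1.2 in \cite{Nak}, an effective pair $(X,Z)$ is \emph{admissible} if there exists an effective $\Q$-divisor $\D$ on $X$ such that $K_X + \D$ is $\Q$-Cartier and such that the classical (log $\Q$-Gorenstein) Kawamata log terminal condition holds for $((X,\D);Z)$. In the terminology of this paper, the first condition on $\D$ is precisely that $\D$ be a boundary on $X$, and the second condition is precisely that $((X,\D);Z)$ be log terminal in the sense used in Section~5 (for log $\Q$-Gorenstein pairs this coincides with the standard definition). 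So admissibility of $(X,Z)$ is equivalent to the existence of a boundary $\D$ on $X$ such that $((X,\D);Z)$ is log terminal.

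With this identification in hand, the corollary is nothing more than Proposition~\ref{prop:lt-lc-reduction-to-log-pairs} in the log terminal case. First, if $(X,Z)$ is log terminal in our sense, then by that proposition there is a boundary $\D$ with $((X,\D);Z)$ log terminal, hence $(X,Z)$ is admissible. Conversely, if $(X,Z)$ is admissible, the same proposition gives that $(X,Z)$ is log terminal in our sense, since having even one such $\D$ suffices.

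The only substantive step, therefore, is checking that Nakayama's definition really does reduce to the log $\Q$-Gorenstein KLT condition on $((X,\D);Z)$ for some boundary $\D$; I would carry this out by comparing Nakayama's formulation of the discrepancy inequalities on a log resolution of $((X,\D);Z)$ with the standard definition via $K_{Y/X}^\D$. Modulo this dictionary, which is the only potential point where conventions might diverge, the corollary is formal. No further input from the rest of the paper is needed.
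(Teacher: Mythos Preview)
Your approach is essentially the paper's: both reduce the corollary to Proposition~\ref{prop:lt-lc-reduction-to-log-pairs} via a dictionary between Nakayama's admissibility and the existence of a boundary $\D$ making $((X,\D);Z)$ log terminal. One refinement: the paper invokes \cite[Lemma~VII.1.3]{Nak} rather than Definition~VII.1.2 directly, and explicitly uses that the notion of log terminal defined here is local. This matters because Nakayama's formulation only asserts the existence of such a $\D$ locally, so in the direction ``admissible $\Rightarrow$ log terminal'' one cannot immediately invoke Proposition~\ref{prop:lt-lc-reduction-to-log-pairs} on all of $X$; instead one uses that log terminality can be checked on an open cover. Your hedge about possible divergence of conventions is precisely where this enters, so once that local-to-global step is made explicit the argument is complete.
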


\begin{proof}
If follows by comparing \cite[Lemma~VII.1.3]{Nak} with
Proposition~\ref{prop:lt-lc-reduction-to-log-pairs} and the fact that
our notion is local.
\end{proof}

\begin{rmk}\label{rmk:comparing-log-sings}
In general, taking an arbitrary boundary $\D$,
if $((X,\D);Z)$ is log terminal (resp., log canonical),
then so is $(X,Z)$. In particular, if $(X,\D)$ is log terminal, then
$\lc((X,\D);Z) \le \lc(X,Z)$.
\end{rmk}

\begin{cor}
Let $(X,Z)$ be a log canonical (resp., log terminal) effective pair.
If $H \subset X$ is a general hyperplane section, then
$(H,Z|_H)$ is log canonical (resp., log terminal).
\end{cor}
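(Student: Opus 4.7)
The plan is to reduce to the log $\Q$-Gorenstein case using Proposition~\ref{prop:lt-lc-reduction-to-log-pairs}, and then invoke the classical Bertini-type statement for log canonical and log terminal singularities in that setting.

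First, by Proposition~\ref{prop:lt-lc-reduction-to-log-pairs}, I pick a boundary $\D$ on $X$ such that $((X,\D);Z)$ is log canonical (resp., log terminal). Since $H$ is a general hyperplane section, I may assume that $H$ is normal, that $H$ is not contained in $\Supp(\D) \cup \Supp(Z)$, and that $\D|_H$ is a well-defined effective $\Q$-divisor on $H$. Because $K_X + \D$ is $\Q$-Cartier, adjunction gives $(K_X + \D)|_H = K_H + \D|_H$, so $\D|_H$ is $\Q$-Cartier modulo $K_H$. In other words, $\D|_H$ is a boundary on $H$.

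Next, I invoke the classical Bertini-type result in the log $\Q$-Gorenstein setting (see e.g.\ \cite[Lemma~5.17]{KM}), which says that a general hyperplane section of a log canonical (resp., log terminal) log $\Q$-Gorenstein pair remains log canonical (resp., log terminal). This gives that $((H,\D|_H); Z|_H)$ is log canonical (resp., log terminal).

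Finally, by the easy direction of Proposition~\ref{prop:lt-lc-reduction-to-log-pairs} applied on $H$ (equivalently, Remark~\ref{rmk:comparing-log-sings}), the existence of a boundary on $H$ making the log pair $((H,\D|_H); Z|_H)$ log canonical (resp., log terminal) implies that $(H, Z|_H)$ itself is log canonical (resp., log terminal). The only real point to verify is that the various genericity conditions on $H$ (normality, avoidance of the components of $\D$ and $Z$, Bertini for the given log resolution computing discrepancies on $X$) can all be imposed simultaneously, but this is a routine application of Bertini's theorem and does not require new ideas. I do not foresee a major obstacle; everything is immediate once Proposition~\ref{prop:lt-lc-reduction-to-log-pairs} has brought us into the log $\Q$-Gorenstein category where the classical result applies.
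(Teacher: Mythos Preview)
Your proof is correct and follows essentially the same approach as the paper: reduce to the log $\Q$-Gorenstein case via Proposition~\ref{prop:lt-lc-reduction-to-log-pairs}, apply the classical Bertini-type result for log pairs, and then invoke the easy direction of Proposition~\ref{prop:lt-lc-reduction-to-log-pairs} (or Remark~\ref{rmk:comparing-log-sings}) on $H$. The paper's proof is just a one-line version of exactly this argument, and your additional checks (that $\D|_H$ is a boundary, the genericity conditions on $H$, and the specific citation to \cite[Lemma~5.17]{KM}) are the natural details one would fill in.
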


\begin{proof}
Since $((X,\D);Z)$ is log canonical (resp., log terminal) for some boundary $\D$,
so is $((H,\D|_H);Z|_H)$, and hence $(H,Z|_H)$.
\end{proof}

\begin{cor}\label{prop:characterization-sings-general-case}
An effective pair $(X,Z)$ is log terminal if and only if
$\J(X,Z) = \O_X$. Moreover, if $X$ is log terminal, then
$$
\lc(X,Z) = \sup \{ t > 0 \mid \J(X,tZ) = \O_X \}.
$$
\end{cor}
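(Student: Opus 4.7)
The plan is to reduce everything to the classical log $\Q$-Gorenstein characterization of Kawamata log terminal singularities via multiplier ideals, using $m$-compatible boundaries as the bridge. The two main tools are Proposition~\ref{prop:lt-lc-reduction-to-log-pairs} (log terminality of $(X,Z)$ is equivalent to log terminality of $((X,\D);Z)$ for some boundary $\D$) and Proposition~\ref{prop:mult-ideal-for-compatible-boundaries} ($\J(X,Z)=\J((X,\D);Z)$ whenever $\D$ is $m$-compatible for a sufficiently divisible $m$).

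For the forward implication of the first assertion, I would assume $(X,Z)$ is log terminal and invoke Proposition~\ref{prop:lt-lc-reduction-to-log-pairs} to obtain a boundary $\D$ with $((X,\D);Z)$ log terminal. The standard theory in the log $\Q$-Gorenstein case (as in \cite[Section~9.3.D]{Laz}) gives $\J((X,\D);Z)=\O_X$. By Remark~\ref{rmk:comparison-mult-ideals}, $\J((X,\D);Z)\subseteq \J(X,Z)\subseteq \O_X$, forcing $\J(X,Z)=\O_X$.

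For the reverse implication, I would assume $\J(X,Z)=\O_X$ and pick $m\ge 2$ sufficiently divisible so that $\J(X,Z)=\J_m(X,Z)$. Theorem~\ref{thm:existence-of-compatible-boundary} supplies an $m$-compatible boundary $\D$; then Proposition~\ref{prop:mult-ideal-for-compatible-boundaries} gives $\J((X,\D);Z)=\O_X$. Since $m$-compatibility enforces $\lrd \D\rrd = 0$ and no common components between $\D$ and $Z$, the classical characterization applies to the log pair $((X,\D);Z)$, which is therefore log terminal. A final appeal to Proposition~\ref{prop:lt-lc-reduction-to-log-pairs} then shows $(X,Z)$ is log terminal. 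The potentially delicate point is verifying that we are genuinely in the log $\Q$-Gorenstein setting needed for the classical theorem; this is precisely what conditions (i)-(iv) in the definition of $m$-compatibility guarantee, so this step goes through cleanly.

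The "moreover" assertion is then almost formal: under the hypothesis that $X$ itself is log terminal, applying the first equivalence to the rescaled pair $(X,tZ)$ for every $t>0$ gives that $(X,tZ)$ is log terminal if and only if $\J(X,tZ)=\O_X$. The two suprema defining $\lc(X,Z)$ therefore coincide. The only subtlety worth checking is that the set $\{t>0\mid \J(X,tZ)=\O_X\}$ is downward closed, which follows from Proposition~\ref{prop:basic-properties-mult-ideal}(a). Overall the main obstacle is bookkeeping around the choice of $m$ — we need the same $m$ to simultaneously realize $\J(X,Z)=\J_m(X,Z)$ and admit an $m$-compatible boundary — but both conditions hold for all sufficiently divisible $m$, so any large enough $m$ in the appropriate divisibility class works.
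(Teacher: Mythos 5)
Your proof is correct and follows the same route the paper implicitly intends: the corollary is stated without proof precisely because it falls out of Proposition~\ref{prop:lt-lc-reduction-to-log-pairs}, the existence of $m$-compatible boundaries (Theorem~\ref{thm:existence-of-compatible-boundary}), Proposition~\ref{prop:mult-ideal-for-compatible-boundaries}, and the classical klt characterization of trivial multiplier ideals in the log $\Q$-Gorenstein setting, which is exactly the chain you assemble. One minor simplification: for the ``moreover'' part, downward-closedness of $\{t>0\mid \J(X,tZ)=\O_X\}$ is not actually needed, since the first equivalence shows the two sets whose suprema define each side are literally identical.
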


We next address the extension of
Elkik's theorem on rational singularities \cite{Elk}.

\begin{cor}\label{thm:lt-implies-rational}
Let $X$ be a normal variety with log terminal singularities. Then
$X$ has rational singularities.
\end{cor}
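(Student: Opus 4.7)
The plan is to reduce to the classical log $\Q$-Gorenstein case via Proposition~\ref{prop:lt-lc-reduction-to-log-pairs} and then invoke Elkik's original theorem as a black box.

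First, I would apply Proposition~\ref{prop:lt-lc-reduction-to-log-pairs} to the trivial pair $(X, 0)$. Since $X$ is log terminal by assumption (i.e., $(X, 0)$ is log terminal in the generalized sense of Definition~\ref{defi:lc-lt-sings}), the proposition produces a boundary $\D$ on $X$ such that $K_X + \D$ is $\Q$-Cartier and $(X, \D)$ is a Kawamata log terminal pair in the classical log $\Q$-Gorenstein sense, meaning that $\ord_F(K_{X'/X}^\D) + 1 > 0$ for every prime divisor $F$ on any proper birational model $X' \to X$ with $X'$ normal.

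Next, I would invoke Elkik's theorem \cite{Elk}, which in its classical formulation asserts that if $(X, \D)$ is a Kawamata log terminal pair with $K_X + \D$ being $\Q$-Cartier, then $X$ has rational singularities. Since rational singularities is an intrinsic property of the variety $X$ and does not depend on the choice of boundary used to witness log terminality, the desired conclusion follows at once.

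There is no serious obstacle in this final step: the substantive content is already contained in Proposition~\ref{prop:lt-lc-reduction-to-log-pairs}, which in turn rests on the existence of $m$-compatible boundaries from Theorem~\ref{thm:existence-of-compatible-boundary}. Once that reduction is available, the corollary becomes a formal consequence of the classical result. The only thing worth double-checking is that the boundary $\D$ produced by Proposition~\ref{prop:lt-lc-reduction-to-log-pairs} indeed satisfies the hypotheses of Elkik's theorem as usually stated, but this is immediate from the definition of log terminality for a log pair in the $\Q$-Gorenstein setting.
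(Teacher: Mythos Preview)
Your proposal is correct and follows essentially the same approach as the paper: reduce to the classical log $\Q$-Gorenstein setting by producing a boundary $\D$ with $(X,\D)$ klt (the paper phrases this as the existence of $\D$ with $\J(X,\D)=\O_X$), and then invoke the standard fact that klt pairs have rational singularities. The only cosmetic difference is that the paper cites \cite[Theorem~5.22]{KM} rather than \cite{Elk}; the former is the precise reference for the klt (as opposed to merely canonical) case, so you may want to adjust the citation accordingly.
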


\begin{proof}
The proof follows from \cite[Theorem 5.22]{KM}
as there exists a boundary $\D$ such that $\J(X,\D) = \O_X$.
\end{proof}

Similarly, the analogous result on Du Bois singularities
due to Kov\'acs, Schwede and Smith \cite[Theorem~1.2]{KSS}
generalizes as follows.

\begin{cor}
Let $X$ be a normal Cohen--Macaulay variety with log canonical singularities. Then
$X$ has Du Bois singularities.
\end{cor}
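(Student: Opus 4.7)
The plan is to mimic the argument of Corollary~\ref{thm:lt-implies-rational} and reduce to the classical log $\Q$-Gorenstein setting, where the theorem of Kov\'acs--Schwede--Smith is already available. Concretely, viewing $X$ as the trivial effective pair $(X,0)$, the hypothesis says that $(X,0)$ is log canonical in the sense of Definition~\ref{defi:lc-lt-sings}.

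By Proposition~\ref{prop:lt-lc-reduction-to-log-pairs}, the log canonicity of $(X,0)$ in our generalized sense is equivalent to the existence of a boundary $\D$ on $X$ such that the log pair $(X,\D)$ is log canonical in the usual (log $\Q$-Gorenstein) sense. In particular, $K_X+\D$ is $\Q$-Cartier and every prime divisor $F$ over $X$ satisfies $a_F(X,\D)\ge -1$. Thus the first step of the proof is simply to invoke this equivalence to produce such a $\D$.

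The second step is to apply \cite[Theorem~1.2]{KSS} directly to the pair $(X,\D)$: the theorem asserts that if $(X,\D)$ is log canonical and $X$ is Cohen--Macaulay, then $X$ has Du Bois singularities. Since our $X$ is assumed Cohen--Macaulay, the conclusion follows immediately; note that the Cohen--Macaulay hypothesis is intrinsic to $X$ and does not involve $\D$, so no further compatibility check is needed.

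There is really no serious obstacle here: the work has already been done in Proposition~\ref{prop:lt-lc-reduction-to-log-pairs}, which packages precisely the translation between our boundary-free log canonicity and the log $\Q$-Gorenstein log canonicity required by \cite{KSS}. The only minor point worth flagging in the write-up is that the boundary $\D$ produced by Proposition~\ref{prop:lt-lc-reduction-to-log-pairs} does not alter the Cohen--Macaulay property of $X$, so the hypotheses of \cite[Theorem~1.2]{KSS} are met verbatim.
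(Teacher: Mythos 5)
Your proof is correct and follows exactly the route the paper intends (the paper leaves the proof implicit, prefacing the corollary with ``Similarly,'' in analogy with the rational-singularities case): produce a boundary $\D$ with $(X,\D)$ log canonical via Proposition~\ref{prop:lt-lc-reduction-to-log-pairs}, then apply \cite[Theorem~1.2]{KSS} using that the Cohen--Macaulay hypothesis on $X$ is independent of the choice of boundary.
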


We also obtain the following generalization of \cite[Theorem~5.5]{Sch},
which was kindly brought to our attention by Karl Schwede.

\begin{cor}
Let $(X,Z)$ be an effective pair with log canonical singularities,
and suppose that $X$ is log terminal. Then the multiplier ideal
$\J(X,Z)$ defines a scheme with Du Bois singularities.
\end{cor}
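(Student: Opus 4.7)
The plan is to reduce to the log $\Q$-Gorenstein setting, where the conclusion is the content of Schwede's Theorem~5.5 in \cite{Sch}, by producing a single boundary that simultaneously captures all three required properties.

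First, I would fix $m \ge 2$ sufficiently divisible that the multiplier ideal equality $\J(X,Z) = \J_m(X,Z)$ holds (via Proposition~\ref{prop:set-J_m-has-unique-max-element}) and that $m$ witnesses both the log canonicity of $(X,Z)$ and the log terminality of $X$ in the sense of Definition~\ref{defi:lc-lt-sings}. By Theorem~\ref{thm:existence-of-compatible-boundary}, I could produce an $m$-compatible boundary $\D$ for $(X,Z)$, so that there is a log resolution $f\colon Y \to X$ of $(X, Z + \O_X(-mK_X))$ with $K^\D_{Y/X} = K_{m,Y/X}$, with $\D$ having no components in $\Supp(Z)$, $m\D$ integral and $\lrd \D \rrd = 0$, and $f$ also a log resolution of $((X,\D); Z + \O_X(-mK_X))$.

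Next, I would check that this single $\D$ accomplishes three things at once. By Proposition~\ref{prop:mult-ideal-for-compatible-boundaries}, $\J(X,Z) = \J((X,\D);Z)$. Following the argument in Proposition~\ref{prop:lt-lc-reduction-to-log-pairs}, the equality $K^\D_{Y/X} = K_{m,Y/X}$ yields $a_{m,F}((X,\D);Z) = a_{m,F}(X,Z)$ at every prime divisor $F$ extracted by a sufficiently high log resolution, so $((X,\D);Z)$ inherits log canonicity from $(X,Z)$. Crucially, the very same $\D$ is also $m$-compatible for the pair $(X, 0)$: condition~(ii) is vacuous there, and any log resolution of $(X, Z + \O_X(-mK_X))$ is automatically a log resolution of $(X, \O_X(-mK_X))$. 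Hence the same reasoning applied with $Z=0$ shows that $(X,\D)$ is log terminal.

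Finally, I would invoke Schwede's theorem \cite[Theorem~5.5]{Sch}: since $((X,\D);Z)$ is a log canonical log pair with $(X,\D)$ log terminal, in the log $\Q$-Gorenstein setting the subscheme cut out by $\J((X,\D);Z)$ has Du Bois singularities. Combined with the identification $\J(X,Z) = \J((X,\D);Z)$ from the previous step, this proves the corollary. The main obstacle is precisely the coordination step: arranging one boundary $\D$ that simultaneously realizes the multiplier ideal, preserves log canonicity of the pair with $Z$, and leaves $(X,\D)$ log terminal. This is where the freedom in the construction of Theorem~\ref{thm:existence-of-compatible-boundary}, together with the observation that $m$-compatibility for $(X,Z)$ implies $m$-compatibility for $(X,0)$ relative to the same log resolution, is essential.
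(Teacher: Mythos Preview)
Your proposal is correct and follows precisely the approach the paper intends: the corollary is stated without proof, as an immediate consequence of reducing to Schwede's theorem via an $m$-compatible boundary, and you have simply made that reduction explicit. Your key observation---that an $m$-compatible boundary for $(X,Z)$ is automatically $m$-compatible for $(X,0)$ with respect to the same resolution, so that a single $\D$ simultaneously realizes $\J(X,Z)=\J((X,\D);Z)$, makes $((X,\D);Z)$ log canonical, and makes $(X,\D)$ log terminal---is exactly the point needed, and it is implicit in how the paper derives the neighboring corollaries on rational and Du~Bois singularities.
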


In \cite{Kaw}, Kawamata proves an important result on
the singularities of minimal log canonical centers,
which in particular implies that such centers have rational singularities.
It follows immediately that,
in the setting and terminology of Theorem~1 in \cite{Kaw},
`minimal log canonical centers' are normal varieties
with log terminal singularities.
In particular this appears to be a natural setting for the theory developed in this
paper, as in general `minimal log canonical centers' are not known to be
$\Q$-Gorenstein (even when the ambient variety is smooth).

In fact, Kawamata's subadjunction theorem extends to our general setting.

\begin{defi}
Let $(X,Z)$ be an effective strictly log canonical pair,
and let $m_0$ be as in Definition~\ref{defi:lc-lt-sings}.
A subvariety $W \subset X$ is said to be
a {\it log canonical center} of $(X,Z)$
if for every multiple $m$ of $m_0$
there is a exceptional prime divisor $E$ over
$X$ such that $c_X(E) = W$ and $a_{m,E}(X,Z) = 0$.
A log canonical center is said to be {\it minimal}
if it is so with respect to inclusions.
\end{defi}

\begin{prop}\label{prop:log-can-centres}
Let $W \subseteq X$ is a minimal log canonical
center for an effective strictly log canonical pair $(X,Z)$.
Then for any sufficiently divisible $m$, there is an effective $m$-compatible
boundary $\D$ such that $W$ is a minimal log canonical center for $((X,\D);Z)$.
\end{prop}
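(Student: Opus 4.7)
The plan is to realise the required boundary $\D$ through the construction of $m$-compatible boundaries in Theorem~\ref{thm:existence-of-compatible-boundary}, choosing the auxiliary general divisor $M$ carefully so that the minimal log canonical center $W$ of $(X,Z)$ survives.

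First, fix $m$ sufficiently divisible (a multiple of the integer $m_0$ from Definition~\ref{defi:lc-lt-sings}) and a log resolution $f\colon Y\to X$ of $(X,Z+\O_X(-mK_X))$ high enough to extract a prime divisor $E$ with $c_X(E)=W$ and $a_{m,E}(X,Z)=0$; such an $E$ exists by the definition of log canonical center. Following the proof of Theorem~\ref{thm:existence-of-compatible-boundary}, construct an $m$-compatible boundary $\D=\tfrac{1}{m}M$, where $M$ is a general element of the base-point-free linear system appearing there. By Bertini, impose the additional conditions that no component of $M$ contains $W$ nor any other log canonical center of $(X,Z)$ (these form a finite set since $(X,Z)$ is log canonical). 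Proposition~\ref{prop:lt-lc-reduction-to-log-pairs} then ensures that $((X,\D);Z)$ is log canonical.

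Since $E$ is $f$-exceptional it is not a component of the strict transform $\D_Y$, so the $m$-compatibility identity $K_{Y/X}^\D=K_{m,Y/X}$ yields directly $a_E((X,\D);Z)=a_{m,E}(X,Z)=0$, and $W=c_X(E)$ is a log canonical center of $((X,\D);Z)$. For the minimality of $W$, suppose for contradiction that some divisor $F$ over $X$ satisfies $c_X(F)\subsetneq W$ and $a_F((X,\D);Z)=0$. Pick $g\colon Y'\to Y$ extracting $F$; combining Lemma~\ref{prop:relative-K-for-composition} with the smoothness of $Y$ one obtains $K_{Y'/X}^\D=K_{m,Y'/X}-(g^*\D_Y-\D_{Y'})$, and hence the key identity
\[
a_F((X,\D);Z)=a_{m,F}(X,Z)-\ord_F(g^*\D_Y).
\]
Minimality of $W$ for $(X,Z)$ forces $a_{m,F}(X,Z)>0$; therefore $\ord_F(g^*\D_Y)>0$, which implies $c_X(F)\subseteq W\cap\Supp(\D)$.

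The main obstacle is ruling out this last possibility through the genericity of $M$. Since $((X,\D);Z)$ is log canonical and $\Q$-Gorenstein it has only finitely many log canonical centers, and those not already log canonical centers of $(X,Z)$ must have centers in $\Supp(\D)$. My plan for the closing step is to exploit the freedom in the linear system $|\LL\otimes\O_X(-mD)|$: a Bertini-type choice of $M$ ensures that the proper transform $M_Y$ meets $f^{-1}(W)$ generically transversely and avoids the finitely many higher-codimension loci that could give rise to a new log canonical valuation over $W$. The delicate computational point is to balance the coefficient $1/m$ of $\D$ against the multiplicities in $\ord_F(g^*\D_Y)$; the sufficient divisibility of $m$ enters precisely to make $\tfrac{1}{m}M_Y$ small enough that the equality $a_{m,F}(X,Z)=\ord_F(g^*\D_Y)$ cannot hold for any $F$ with center strictly inside $W$, yielding the required contradiction.
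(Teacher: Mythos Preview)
Your construction of an $m$-compatible $\D$ and the verification that $W$ is a log canonical center of $((X,\D);Z)$ are correct. The gap lies in the minimality step.

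The assertion that minimality of $W$ for $(X,Z)$ forces $a_{m,F}(X,Z)>0$ whenever $c_X(F)\subsetneq W$ is not justified for a single fixed $m$: by the definition of log canonical center a zero-discrepancy place is required to exist at \emph{every} multiple of $m_0$, so $a_{m,F}(X,Z)=0$ for one particular $m$ need not make $c_X(F)$ a log canonical center of $(X,Z)$. What you actually need is that the finite set $\{c_X(F):a_{m,F}(X,Z)=0\}$ already coincides with the set of log canonical centers of $(X,Z)$ once $m$ is divisible enough; this is true, but it is itself a stabilization statement that your proposal never establishes. Your Bertini plan does not supply it: the ``higher-codimension loci that could give rise to a new log canonical valuation over $W$'' depend on $\D$ and hence on the very $M$ you are trying to choose, and the proposed ``balancing of $1/m$ against $\ord_F(g^*\D_Y)$'' has no evident monotonicity since both quantities move with $m$. (In fact the genericity of $M_Y$ already guarantees, since its coefficient $1/m$ is below $1$, that adding $\D_Y$ creates no new log canonical strata on the simple-normal-crossing model $Y$; the entire content lies in controlling the level-$m$ centers of $(X,Z)$ themselves.)

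The paper replaces all of this by a short limiting argument: for each $k\ge 1$ it takes a $km_0$-compatible boundary $\D_k$, observes that the finite set $\W_k$ of log canonical centers of $((X,\D_k);Z)$ is decreasing in $k$ and hence eventually constant, and concludes that $W$ is minimal in $\W_k$ for $k\gg 0$. This is precisely the stabilization input your argument is missing, packaged so that no Bertini refinement of a single $\D$ is needed.
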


\begin{proof}
Let $m_0$ as in Definition~\ref{defi:lc-lt-sings}, and for every integer
$k > 0$, let $\D_k$ be a $km_0$-compatible boundary for $(X,Z)$.
Note that, for every $k$, the pair $((X,\D_k);Z)$ is log canonical
and $W$ is a log canonical center for $((X,\D_k);Z)$.
Moreover, for every $k \ge n \ge 1$ we have
$a_F((X,\D_k);Z) \ge a_F((X,\D_n);Z)$ for any divisor $F$ over $X$.
It follows that, if $\W_k$ denotes the set
of log canonical centers of $((X,\D_k);Z)$,
then $\W_k \subseteq \W_n$ for every $k \ge n \ge 1$.
Since a strictly log canonical log pair has only finitely many
log canonical centers, the sequence of sets $\{\W_k\}$ stabilizes,
and therefore $W$ is a minimal log canonical center of $((X,\D_k);Z)$,
for $k \gg 1$.
\end{proof}

\begin{cor}\label{thm:subadj-general-case}
Let $(X,Z)$ be an effective strictly log canonical pair on a
log terminal variety $X$.
Then every minimal log canonical center of $(X,Z)$ is a normal
variety with log terminal (and hence rational) singularities.
\end{cor}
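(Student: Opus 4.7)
The plan is to reduce the statement to the classical log $\Q$-Gorenstein setting, where Kawamata's subadjunction theorem is available, and then translate the conclusions back into the generalized framework developed in this paper via Proposition~\ref{prop:lt-lc-reduction-to-log-pairs} and Corollary~\ref{thm:lt-implies-rational}.

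First, I would invoke Proposition~\ref{prop:log-can-centres} to produce a sufficiently divisible integer $m$ and an effective $m$-compatible boundary $\D$ on $X$ such that the log $\Q$-Gorenstein log pair $((X,\D);Z)$ is strictly log canonical and $W$ is a minimal log canonical center of $((X,\D);Z)$ in the classical sense. This is the key step that packages our general pair into a form to which the classical machinery can be applied.

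Next, I would apply Kawamata's subadjunction theorem (Theorem~1 in \cite{Kaw}) to $((X,\D);Z)$. This yields that the minimal log canonical center $W$ is a normal variety, and moreover produces an effective $\Q$-divisor $\D_W$ on $W$ such that the classical log $\Q$-Gorenstein pair $(W,\D_W)$ is Kawamata log terminal. To finish, I would apply Proposition~\ref{prop:lt-lc-reduction-to-log-pairs} to $(W,\D_W)$: the existence of a boundary making $(W,\D_W)$ klt in the classical sense implies that $W$ itself is log terminal in the generalized sense of this paper. Corollary~\ref{thm:lt-implies-rational} then gives that $W$ has rational singularities, completing the proof.

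The main obstacle I anticipate is ensuring the hypotheses of Kawamata's subadjunction as stated in \cite{Kaw} fit our $((X,\D);Z)$ cleanly: in \cite{Kaw}, $Z$ is typically taken to be a $\Q$-divisor, while here it is a formal $\R$-linear combination of closed subschemes. This should be routine, since one can always reduce to the divisorial case by passing to a log resolution on which each $\I_{Z_k}\.\O_Y$ is the ideal of a Cartier divisor and absorbing these divisors into an auxiliary boundary; alternatively, the version of Kawamata's theorem formulated in terms of subschemes in the literature applies directly. The hypothesis that $X$ be log terminal is used at the outset to ensure that the candidate minimal log canonical centers are well-behaved and that strict log canonicity is meaningful, and it enters the conclusion indirectly through the reduction step.
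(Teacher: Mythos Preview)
Your proposal is correct and follows essentially the same route as the paper: reduce to the log $\Q$-Gorenstein setting via Proposition~\ref{prop:log-can-centres}, invoke Kawamata's subadjunction to obtain a klt boundary $\D_W$ on $W$, and then read this back through Proposition~\ref{prop:lt-lc-reduction-to-log-pairs} (and Corollary~\ref{thm:lt-implies-rational} for rationality). Your remark about the form of $Z$ is a fair point of care that the paper leaves implicit; note also that the hypothesis that $X$ be log terminal is what ensures, for $m$ sufficiently divisible, that the $m$-compatible boundary $\D$ satisfies $\J(X,\D)=\O_X$, so that $(X,\D)$ is klt and Kawamata's hypotheses are met.
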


\begin{proof}
Let $W$ be a minimal log canonical center.
By Proposition~\ref{prop:log-can-centres}, we can fix an
$m$-compatible boundary $\D$ such that $W$ is a minimal log canonical center
of $((X,\D),Z)$. It follows by \cite{Kaw} that there is a
boundary $\D_W$ on $W$ such that $(W,\D_W)$ is log terminal,
and this implies that $W$ is log terminal.
\end{proof}

We close this section with a discussion on surface singularities.
As explained in \cite[Notation~4.1]{KM}, one can define the notions
of {\it numerically log terminal} and {\it numerically log canonical} singularities
for arbitrary normal surfaces, using the perfect pairing
on the relative N\'eron--Severi space of a resolution.
Here we show that a normal surface is log terminal (resp., log canonical)
if and only if it is numerically log terminal (resp.,
numerically log canonical).

\begin{prop}\label{cor:num-lt}
A normal surface $X$ is log terminal
if and only if is numerically log terminal.
\end{prop}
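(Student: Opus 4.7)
The plan is to reduce both notions to the classical log terminal condition by showing that each hypothesis forces $X$ to be $\Q$-factorial; once $K_X$ is $\Q$-Cartier, the two definitions coincide on the nose.

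For the implication ``log terminal $\Rightarrow$ numerically log terminal,'' I would first apply Proposition~\ref{prop:lt-lc-reduction-to-log-pairs} to find a boundary $\D$ on $X$ such that $(X,\D)$ is log terminal in the usual log $\Q$-Gorenstein sense. Corollary~\ref{thm:lt-implies-rational} (Elkik's theorem in this setting) then yields that $X$ has rational singularities; by a classical theorem of Lipman, a normal surface with rational singularities is $\Q$-factorial, so $K_X$ is $\Q$-Cartier. Fixing $m \ge 1$ with $mK_X$ Cartier, the sheaf $\O_X(-mK_X)$ is invertible, so $\tfrac{1}{m}f^\natural(mK_X) = f^*K_X$ for any resolution $f\colon Y \to X$; hence $K_{m,Y/X} = K_Y - f^*K_X$ equals both the classical relative canonical $\Q$-divisor and the Mumford-numerical one. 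The condition that all coefficients of $K_{m,Y/X}$ along exceptional divisors exceed $-1$ therefore translates verbatim into numerical log terminality.

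For the reverse direction, I would invoke the structure theory of numerically log terminal normal surface singularities (see \cite[Section~4.1]{KM}), which identifies them with quotient singularities, and in particular shows that they are $\Q$-factorial. With $K_X$ once again $\Q$-Cartier, the same identification $K_{m,Y/X} = K_{Y/X}$ for $m$ with $mK_X$ Cartier transports the numerical condition back to our notion of log terminality.

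The main obstacle is producing $\Q$-factoriality on either side: it rests on Lipman's theorem on rational surface singularities combined with Corollary~\ref{thm:lt-implies-rational} in the forward direction, and on the classical classification of numerically log terminal surface germs as quotient singularities in the reverse direction. Once these inputs are in hand, the matching of the two definitions is immediate from the identity $\tfrac{1}{m}f^\natural(mK_X) = f^*K_X$, which holds whenever $mK_X$ is Cartier.
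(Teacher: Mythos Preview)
Your proposal is correct and follows the same overall strategy as the paper: reduce both directions to the $\Q$-Gorenstein case by establishing $\Q$-factoriality, starting the forward direction with Proposition~\ref{prop:lt-lc-reduction-to-log-pairs} to produce a boundary $\D$ with $(X,\D)$ klt. The difference lies in how $\Q$-factoriality is extracted. The paper invokes \cite[Proposition~4.11]{KM} directly for both directions: that result states that a surface pair is numerically log terminal if and only if it is $\Q$-factorial and log terminal, so the klt pair $(X,\D)$ is automatically numerically log terminal, which forces $X$ to be $\Q$-factorial; the same reference handles the reverse implication in one line. Your route---rational singularities via Corollary~\ref{thm:lt-implies-rational} and then Lipman's theorem in the forward direction, and the quotient-singularity classification in the reverse---reaches the same conclusion but imports more machinery than necessary. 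In particular, the detour through rational singularities and the full structure theory can be replaced by the single citation to \cite[Proposition~4.11]{KM}, which packages exactly the $\Q$-factoriality statement you need.
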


\begin{proof}
By \cite[Proposition~4.11]{KM}, $X$ is
numerically log terminal if and only if it is $\mathbb Q$-factorial and
log terminal. On the other hand, if $X$ is log terminal, then
by Proposition~\ref{prop:lt-lc-reduction-to-log-pairs}
there is a boundary $\D$ such that $(X,\D)$ is log terminal, and hence
numerically log terminal. Again by \cite[Proposition~4.11]{KM},
this implies that $X$ is $\Q$-factorial.
\end{proof}

\begin{prop}\label{prop:num-lc}
A normal surface $X$ is log canonical
if and only if is numerically log canonical.
\end{prop}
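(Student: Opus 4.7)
The plan is to mirror the proof of Proposition~\ref{cor:num-lt}, reducing to the log $\Q$-Gorenstein case via Proposition~\ref{prop:lt-lc-reduction-to-log-pairs} and invoking structural results on normal surface singularities from \cite[Section~4]{KM}.

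For the forward direction, I would first apply Proposition~\ref{prop:lt-lc-reduction-to-log-pairs} to fix a boundary $\D$ such that $(X,\D)$ is log canonical. On any resolution $f\colon Y\to X$, let $f^{-1}K_X$ and $f^{-1}\D$ denote the numerical pullbacks in the sense of \cite[Notation~4.1]{KM}. Because this numerical pullback is $\R$-linear and agrees with the usual pullback on $\Q$-Cartier divisors,
$$
f^*(K_X+\D) \;=\; f^{-1}K_X + f^{-1}\D.
$$
Writing $f^{-1}\D = \tilde\D + \sum_i c_i E_i$, where $\tilde\D$ is the proper transform and the $E_i$ are the exceptional curves, the $c_i$ are determined by $\sum_i c_i(E_i\cdot E_j) = -\tilde\D\cdot E_j$. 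Since $\tilde\D\cdot E_j\ge 0$ and the negative of the exceptional intersection matrix is a symmetric positive definite matrix with non-positive off-diagonals (a non-singular $M$-matrix), its inverse has non-negative entries, so $c_i\ge 0$. Writing $K_Y = f^{-1}K_X + \sum_i a_i^0 E_i$ for the numerical discrepancies of $X$ and comparing with $K_Y + \tilde\D = f^*(K_X+\D) + \sum_i a_i^\D E_i$, where $a_i^\D\ge -1$ by log canonicity of $(X,\D)$, one obtains $a_i^0 = c_i + a_i^\D\ge -1$. Hence $X$ is numerically log canonical.

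For the converse, I would invoke the classification of numerically log canonical normal surface singularities in \cite[Section~4]{KM} to conclude that $K_X$ is automatically $\Q$-Cartier. Once $X$ is $\Q$-Gorenstein, the numerical pullback coincides with the usual pullback, so the numerically log canonical condition reduces to the classical log canonical condition, which agrees with our notion by the remark following Definition~\ref{defi:lc-lt-sings}.

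The main obstacle is this latter implication. The analogue of \cite[Proposition~4.11]{KM} used in the proof of Proposition~\ref{cor:num-lt}, which packaged $\Q$-factoriality and log terminality into a single statement, is not available in the log canonical setting; one instead needs the classical fact that numerically log canonical surface singularities are automatically $\Q$-Gorenstein. This ultimately rests on the classification of log canonical dual graphs (simple elliptic, cycles of rational curves, Du Val, and their quotients) together with a case-by-case verification of $\Q$-Gorensteinness, which is where the heavy surface-theoretic input enters.
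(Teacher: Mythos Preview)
Your proof is correct, but for the forward direction (log canonical $\Rightarrow$ numerically log canonical) it follows a genuinely different route from the paper.

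The paper does \emph{not} invoke Proposition~\ref{prop:lt-lc-reduction-to-log-pairs} here. Instead it works directly with the $m$-th limiting relative canonical divisor: on a log resolution $f\colon Y\to X$ of $(X,\O_X(-mK_X))$ with $\O_X(-mK_X)\.\O_Y=\O_Y(-A)$, the log canonical hypothesis forces $F:=K_{m,Y/X}+E=K_Y+E-\tfrac1m A$ to be effective and exceptional (here $E$ is the reduced exceptional divisor). Writing $K_Y\equiv_f N=\sum a_iE_i$ for the numerical discrepancies, one has $N+E-F\equiv_f\tfrac1m A$; since $-A$ is $f$-nef and $N+E-F$ is exceptional, the Negativity Lemma gives $N+E-F\ge 0$, hence $a_i\ge -1$.

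Your argument, by contrast, first passes through a boundary $\D$ with $(X,\D)$ log canonical and then proves the monotonicity statement that numerical discrepancies of $X$ dominate those of $(X,\D)$, via the $M$-matrix structure of the exceptional intersection form. This is perfectly valid and pleasantly parallel to Proposition~\ref{cor:num-lt}, but it rests on the existence of $m$-compatible boundaries (Theorem~\ref{thm:existence-of-compatible-boundary}), which is one of the main technical results of the paper. The paper's argument is more self-contained: it bypasses compatible boundaries entirely and extracts the inequality directly from the definition of $K_{m,Y/X}$ and the Negativity Lemma. Your approach buys conceptual uniformity with the log terminal case; the paper's buys independence from the boundary machinery.

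For the converse direction the two arguments agree: both appeal to \cite[Section~4]{KM} for the fact that numerically log canonical surface singularities are $\Q$-Gorenstein, after which the two notions coincide.
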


\begin{proof}
If $X$ is numerically log canonical, then it is $\Q$-Gorenstein
(cf. \cite[Notation~4.1]{KM}), and hence log canonical.
Conversely, suppose that $X$ is log canonical in the generality introduced
in this section.
We fix a canonical divisor $K_X$ on $X$
and a sufficiently divisible $m \ge 1$. Let $f \colon Y \to X$ be a log
resolution of $(X,\O_X(-mK_X))$,
and write $\O_X(-mK_X) \cdot \O_Y = \O_Y(-A)$ where $A$ is a divisor on $Y$.
Let $K_Y$ be the canonical divisor on $Y$ such that $f_*K_Y = K_X$.
Note that $-A$ is $f$-nef. Let $E = \sum E_i$ be the
reduced exceptional divisor of $f$.
Since $X$ is log canonical, it follows
that if $m$ is sufficiently divisible then the $\Q$-divisor
$$
F := K_{m,Y/X} + E = K_Y + E - \tfrac 1m A
$$
is an effective exceptional $\Q$-divisor.
Let $N = \sum a_i E_i$ be characterized by $K_Y \equiv_f N$. We have
$$
N + E - F \equiv_f \tfrac 1m A.
$$
In particular $-(N+E-F)$ is $f$-nef, and since
it is exceptional, we conclude that $N + E - F \ge 0$ by
the Negativity Lemma (\cite[Lemma 3.39]{KM}). This implies that $a_i \ge -1$
for all $i$, an hence that $X$ is numerically log canonical.
\end{proof}

\begin{cor}
Let $X$ be a normal surface with log terminal (resp., log canonical) singularities.
Then $X$ is $\Q$-factorial (resp., $\mathbb Q$-Gorenstein).
\end{cor}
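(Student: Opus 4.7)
The plan is to read off this corollary directly from the two propositions immediately preceding it, together with the characterizations of numerically log terminal and numerically log canonical singularities recalled from \cite[Notation~4.1]{KM} and \cite[Proposition~4.11]{KM}. There is essentially no new work to do; the corollary is advertised as a consequence of Propositions~\ref{cor:num-lt} and \ref{prop:num-lc}, so the task is just to record the implications.

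For the log terminal statement, I would first apply Proposition~\ref{cor:num-lt} to conclude that $X$ is numerically log terminal. Then I invoke \cite[Proposition~4.11]{KM}, which asserts that a normal surface is numerically log terminal if and only if it is $\Q$-factorial and log terminal; in particular $X$ is $\Q$-factorial. This is the entire argument for the first half.

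For the log canonical statement, I would apply Proposition~\ref{prop:num-lc} to conclude that $X$ is numerically log canonical, and then quote \cite[Notation~4.1]{KM}, where the very definition of numerically log canonical singularities requires $K_X$ to be $\Q$-Cartier (so that the intersection numbers $K_X \cdot E_i$ with exceptional curves make sense in the perfect pairing on the relative N\'eron--Severi space). Hence $X$ is $\Q$-Gorenstein.

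Since both halves are instantaneous consequences of results quoted just above, there is no real obstacle; the only thing to be careful about is citing the correct direction of \cite[Proposition~4.11]{KM} and noting that $\Q$-Gorenstein is built into the definition of numerically log canonical (rather than being something to prove separately via a negativity argument). The substantive content is already in Propositions~\ref{cor:num-lt} and \ref{prop:num-lc}, which in turn rely on the existence of boundaries $\D$ making $(X,\D)$ log terminal (Proposition~\ref{prop:lt-lc-reduction-to-log-pairs}) and on the Negativity Lemma applied to $K_{m,Y/X}$.
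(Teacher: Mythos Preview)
Your proposal is correct and matches the paper's (implicit) proof: the corollary is stated without argument because it is immediate from Propositions~\ref{cor:num-lt} and~\ref{prop:num-lc} together with \cite[Proposition~4.11]{KM}. One small correction worth noting: the numerical notions in \cite[Notation~4.1]{KM} are set up precisely so as \emph{not} to require $K_X$ to be $\Q$-Cartier---one uses the negative-definite intersection form on the exceptional curves to define a numerical pullback of any Weil divisor---so $\Q$-Gorensteinness is a consequence (via \cite[Proposition~4.11(1)]{KM}) rather than part of the definition; the paper's own citation of Notation~4.1 in the proof of Proposition~\ref{prop:num-lc} is a bit loose on this point, and your explanation inherits that looseness.
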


\section{Terminal and canonical singularities}

In this section we deal with the generalization of canonical and terminal
singularities, and discuss the corresponding extensions of
invariance properties of singularities, plurigenera
and numerical Kodaira dimensions established
in the $\Q$-Gorenstein case in works of Siu, Kawamata and Nakayama.
Throughout the section, the ground field is assumed
to be an algebraically closed field of characteristic zero.

Consider a pair $(X,Z)$, where $X$ is a normal variety and $Z$ is
an effective formal linear combination of proper closed subschemes of $X$.

\begin{defi}
Let $X' \to X$ be a proper birational morphism with $X'$ normal,
and let $F$ be a prime divisor on $X'$.
The {\it log-discrepancy} of a prime divisor $F$ over $X$
with respect to $(X,Z)$ is
$$
a_F(X,Z) := \ord_F(K_{X'/X}) + 1 - \val_F(Z).
$$
The pair $(X,Z)$ is said to be {\it canonical}
(resp., {\it terminal}) if $a_F(X,Z) \ge 1$ (resp., $> 1$)
for every exceptional prime divisor $F$ over $X$.
\end{defi}

Of course these notions coincide with the familiar ones
in the $\Q$-Gorenstein case.
Canonical singularities admit the following characterization
(which is well known in the $\Q$-Gorenstein case).

\begin{prop}\label{prop:char-*-canonical}
Let $X$ be a normal variety, and
suppose that $Z = \sum a_k\.Z_k$ is an effective
formal $\Q$-linear combination
of effective Cartier divisors $Z_k$ on $X$. Then the pair $(X,Z)$ is canonical
if and only if for all sufficiently divisible $m \ge 1$
(in particular, we ask that $m a_k \in \Z$ for every $k$),
and for every log resolution $f \colon Y \to X$ of $\big(X,Z + \O_X(mK_X)\big)$,
there is an inclusion
$$
\O_X\big(m(K_X + Z)\big)\.\O_Y \subseteq \O _Y\big(m(K_Y+Z_Y)\big)
$$
as sub-$\O_Y$-modules of $\K_Y$, where $Z_Y$ is the proper transform of $Z$
(as usual, the canonical divisors $K_X$ and $K_Y$ are
chosen so that $f_*K_Y = K_X$).
\end{prop}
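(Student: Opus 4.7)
The plan is to reduce the inclusion $\O_X(m(K_X+Z))\cdot\O_Y\subseteq\O_Y(m(K_Y+Z_Y))$ to a coefficientwise valuation inequality at each prime divisor $F\subset Y$ and then compare with the canonical condition $\ord_F(K_{Y/X})\geq \val_F(Z)$ using the asymptotic nature of $f^*(-K_X)$.

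First I would translate the inclusion into valuations. The left-hand side is the sub-$\O_Y$-module of $\K_Y$ generated by rational functions $\phi$ with $\divisor_X(\phi)+m(K_X+Z)\geq 0$, so its valuation along a prime $F\subset Y$ is $\val_F^\natural(-m(K_X+Z))$. Hence the inclusion at $F$ amounts to
\begin{equation*}
\val_F^\natural(-m(K_X+Z))+m\ord_F(K_Y+Z_Y)\geq 0.
\end{equation*}
If $F$ is non-exceptional, then $f_*K_Y=K_X$ forces $\ord_F(K_Y)=\ord_{f(F)}(K_X)$, and similarly $\ord_F(Z_Y)=\ord_{f(F)}(Z)$ and $\ord_F(\phi)=\ord_{f(F)}(\phi)$; so the inequality reduces to the defining condition $\divisor_X(\phi)+m(K_X+Z)\geq 0$ on $X$ and holds automatically. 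Thus the only real issue is at exceptional $F$, where $\ord_F(Z_Y)=0$.

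For exceptional $F$, taking $m$ sufficiently divisible so that $mZ$ is a sum of Cartier divisors, Lemma~\ref{lem:natural-val:additivity:special-case} applied with $C=-mZ$ gives
\begin{equation*}
\val_F^\natural(-m(K_X+Z))=\val_F(-mZ)+\val_F^\natural(-mK_X)=-m\val_F(Z)+\val_F^\natural(-mK_X),
\end{equation*}
so the required inequality becomes
\begin{equation*}
\ord_F(K_Y)+\tfrac 1m\val_F^\natural(-mK_X)\geq \val_F(Z).
\end{equation*}
By Lemma~\ref{lem:inf=limsup=lim} and Definition~\ref{defi:valuation-pullback}, the quantity $\tfrac 1m\val_F^\natural(-mK_X)$ is bounded below by $\val_F(-K_X)=\ord_F(f^*(-K_X))$ and converges to it along $m=k!\to\infty$, so
\begin{equation*}
\ord_F(K_Y)+\tfrac 1m\val_F^\natural(-mK_X)\geq \ord_F(K_Y)+\val_F(-K_X)=\ord_F(K_{Y/X}).
\end{equation*}

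Both directions of the proposition then follow. If $(X,Z)$ is canonical, then $\ord_F(K_{Y/X})\geq \val_F(Z)$ for every exceptional $F$, and the chain of inequalities above yields the inclusion for every sufficiently divisible $m$. Conversely, assuming the inclusion holds on any log resolution extracting a fixed exceptional $F$ for all sufficiently divisible $m$ (the valuation $\val_F^\natural(-mK_X)$ being intrinsic to $F$ by Lemma~\ref{prop:composition-natural-pullback} once $\O_X(mK_X)\cdot\O_Y$ is invertible), we obtain $\ord_F(K_Y)+\tfrac 1m\val_F^\natural(-mK_X)\geq \val_F(Z)$ for such $m$, and passing to the infimum gives $\ord_F(K_{Y/X})\geq\val_F(Z)$, which is canonicity at $F$. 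The main subtlety is exactly the asymptotic nature of $f^*(-K_X)$: no finite natural pullback $\val_F^\natural(-mK_X)$ need realize $\val_F(-K_X)$ (cf.\ Example~\ref{eg:pullback-non-additive}), but the one-sided comparison from Lemma~\ref{lem:inf=limsup=lim} is strong enough in both directions.
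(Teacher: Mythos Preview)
Your proof is correct and follows essentially the same approach as the paper's: both hinge on the inequality $\val_F^\natural(-m(K_X+Z))\ge \val_F(-m(K_X+Z))$ from Remark~\ref{rmk:comparison-natural-natural} together with the convergence $\tfrac1m\val_F^\natural(-mK_X)\to\val_F(-K_X)$ from Lemma~\ref{lem:inf=limsup=lim}. The only difference is presentational---you work one prime divisor at a time and treat the non-exceptional case explicitly, whereas the paper carries out the same computation globally with divisors and runs the converse by contrapositive (finding a bad $F$, choosing $m$ large, then blowing up further so that $\O_X(mK_X)\.\O_Y$ becomes invertible).
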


\begin{proof}
Note that $f^{-1}(Z) = f^*Z = - f^*(-Z)$, once we think of $Z$
and $f^{-1}(Z)$ as $\Q$-Cartier $\Q$-divisors.
If $(X,Z)$ is canonical, and $m$ and $f$ are chosen
as in the statement, then we see that
\begin{multline*}
m(K_Y+Z_Y) + f^\natural(-m(K_X + Z)) \ge
m (K_Y+Z_Y) + f^*(-m(K_X + Z))\\
= m (K_Y+Z_Y + f^*(-K_X - Z)) = m(K_{Y/X} +Z_Y - f^{-1}(Z)) \ge 0
\end{multline*}
by Remark~\ref{rmk:comparison-natural-natural} and
Proposition~\ref{prop:val-pullback:additivity:special-case},
and hence we get an inclusion as asserted.
Conversely, suppose that $(X,Z)$ is not canonical, and fix
any log resolution $f\colon Y \to X$ of $(X,Z)$.
Then the $\R$-divisor $K_Y +Z_Y + f^*(-K_X -Z)$ is not effective.
Since $f^*(-K_X-Z)$ is the componentwise limit of the $\Q$-divisors
$\frac 1m\.f^\natural(-m(K_X+Z))$, we can find a sufficiently large
(and divisible) $m$ such that
$K_Y+Z_Y + \frac 1m \. f^\natural(-m(K_X + Z))$ is not effective.
By further blowing up, we may assume that
$f$ is a log resolution of $(X,Z + \O_X(mK_X))$.
Then the assertion follows.
\end{proof}

As an application, we show that deformation
invariance of canonical singularities, plurigenera, and numerical
Kodaira dimension also holds in this more general context.

We start with the extension of Kawamata's theorem on the deformation
invariance of canonical singularities \cite{Kaw3}.

\begin{thm}\label{thm:invariance-can-sings}
Let $f\colon X \to C$ be a flat morphism from a
variety to a smooth curve such that, for some point $0 \in C$, the fiber
$X_0=f^{-1}(0)$ is a normal variety with only canonical singularities. Then
$(X,X_0)$ is canonical in a neighborhood of $X_0$,
and so are all fibers of $f$ over a neighborhood of $0$.
\end{thm}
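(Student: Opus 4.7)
The plan is to verify the sheaf-inclusion characterization of canonical singularities from Proposition~\ref{prop:char-*-canonical} for the pair $(X,X_0)$, and then deduce canonicity of nearby fibers by openness of canonicity in flat families. First, since $f\colon X\to C$ is flat over a smooth curve with $X_0$ normal, the total space $X$ is normal in a neighborhood of $X_0$; after shrinking $C$ we assume $X$ is normal. As the scheme-theoretic fiber of $f$ over $0\in C$, the subscheme $X_0$ is an effective Cartier divisor on $X$; we choose a canonical divisor $K_X$ whose support avoids $X_0$, so that classical adjunction gives $K_{X_0}=(K_X+X_0)|_{X_0}$ as Weil divisors on $X_0$.

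Fix a sufficiently divisible $m\ge 1$ and, by Hironaka's theorem, choose a log resolution $g\colon Y\to X$ of $(X,X_0+\O_X(mK_X))$ with the further property that the strict transform $Y_0$ of $X_0$ is smooth and the restriction $h:=g|_{Y_0}\colon Y_0\to X_0$ is itself a log resolution of $(X_0,\O_{X_0}(mK_{X_0}))$. Since $X_0$ has canonical singularities, Proposition~\ref{prop:char-*-canonical} applied to $X_0$ yields
$$
\O_{X_0}(mK_{X_0})\cdot\O_{Y_0}\subseteq\O_{Y_0}(mK_{Y_0}).
$$
Via the adjunction identifications $\O_{X_0}(mK_{X_0})=\O_X(m(K_X+X_0))|_{X_0}$ and $\O_{Y_0}(mK_{Y_0})=\O_Y(m(K_Y+Y_0))|_{Y_0}$ (which rely on the Cartierness of $X_0$ and of $Y_0$), this is precisely the restriction to $Y_0$ of the desired inclusion
$$
\O_X(m(K_X+X_0))\cdot\O_Y\subseteq\O_Y(m(K_Y+Y_0))\qquad(\star)
$$
on $Y$. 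Equivalently, writing $g^*X_0=Y_0+E$ with $E$ an effective $g$-exceptional divisor (the Cartier pullback of $X_0$ being available by Proposition~\ref{prop:val-pullback:additivity:special-case}), the identity $K_{Y_0/X_0}=(K_{Y/X}-E)|_{Y_0}$, obtained by combining adjunction on the smooth $Y$ with the additivity of pullback against the Cartier divisor $X_0$, translates canonicity of $X_0$ into the inequality $\ord_F(K_{Y/X})\ge\val_F(X_0)$ for each $g$-exceptional prime $F$ meeting $Y_0$.

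To promote $(\star)$ to a neighborhood of $Y_0$ in $Y$, observe that both sides arise from $\O_X(mK_X)\cdot\O_Y\subseteq\O_Y(mK_Y-mE)$ by twisting by the line bundle $\O_Y(mg^*X_0)=\O_Y(mY_0+mE)$, so $(\star)$ reduces to that inclusion; the coherent sheaf measuring its failure is supported on the $g$-exceptional locus and vanishes upon restriction to $Y_0$ by the previous paragraph, and a Nakayama-type argument using the effective Cartier divisor $Y_0$ yields its vanishing in a neighborhood of $Y_0$. This proves that $(X,X_0)$ is canonical in a neighborhood of $X_0$. For the second assertion, canonicity is an open condition in flat families, which in our non-$\Q$-Gorenstein setting can be verified by fixing an $m$-compatible boundary $\D$ from Theorem~\ref{thm:existence-of-compatible-boundary} and appealing to the classical statement for the log $\Q$-Gorenstein pair $((X,\D),X_0)$; consequently $(X,X_t)$ is canonical for $t$ near $0$, and the easy direction of inversion of adjunction then gives that each such fiber $X_t$ has canonical singularities. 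The main obstacle is the lifting step: since $X$ is not assumed $\Q$-Gorenstein, the $\natural$-pullback of $K_X$ is non-functorial under composition, so the argument relies essentially on the Cartierness of $X_0$ and the careful matching of the log resolution on $Y$ with one on $Y_0$.
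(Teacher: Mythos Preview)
The central gap is in the ``Nakayama-type'' lifting step. What the canonicity of $X_0$ actually gives you, after the adjunction/twisting you describe, is that the divisor $D$ on $Y$ defined by $\O_Y(-D)=\big(\O_X(mK_X)\.\O_Y\big)\otimes\O_Y(-mK_Y+mE)$ satisfies $D|_{Y_0}\ge 0$; equivalently, $d_i\ge 0$ for every exceptional prime $F_i$ with $F_i\cap Y_0\neq\emptyset$. This is exactly the statement that the failure locus is disjoint from $Y_0$, and nothing more: there is no mechanism here forcing $d_i\ge 0$ for exceptional primes $F_i$ lying over $X_0$ but not meeting $Y_0$ (such components do occur in log resolutions, since $g^{-1}(X_0)=Y_0\cup\Supp E$ is connected but its dual graph can have arbitrary depth). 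Consequently you only obtain $(\star)$ on a Zariski neighbourhood of $Y_0$ in $Y$, whereas Proposition~\ref{prop:char-*-canonical} requires it on $g^{-1}(U)$ for some neighbourhood $U$ of $X_0$ in $X$; these are not the same set. Your claim that ``$\mathcal{Q}|_{Y_0}=0$'' is also not what the previous paragraph yields: it yields the inclusion of the \emph{images} of $\mathcal I$ and $\mathcal L$ in $\K_{Y_0}$, which does not in general kill $\mathcal{Q}\otimes\O_{Y_0}$.

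This is not a technical slip but the heart of the matter. The paper closes the gap by invoking Kawamata's extension theorem (Theorem~A of \cite{Kaw2}): one lifts a generating set $\{s_i\}$ of $\O_{X_0}(mK_{X_0})$ to sections $\tilde s_i$ of $\O_{Y_0}(mK_{Y_0})$ using canonicity of $X_0$, \emph{extends} these to sections $\tilde S_i$ of $\O_Y(m(K_Y+Y_0))$ on all of $Y$, and then pushes forward and applies Nakayama \emph{on $X$} to conclude $\mu_*\O_Y(m(K_Y+Y_0))=\O_X(m(K_X+X_0))$ near $X_0$, from which $(\star)$ follows on the full preimage. The extension theorem is a genuinely deep input (multiplier-ideal/Ohsawa--Takegoshi type), and your argument, if it worked, would bypass it entirely. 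A secondary issue: your appeal to $m$-compatible boundaries for the openness assertion is misplaced, since by construction such $\D$ realise $K_{m,Y/X}$, whereas canonicity in this paper is defined via $K_{Y/X}$, and in the non-$\Q$-Gorenstein setting these need not agree (cf.\ Remarks~\ref{rmk:K-} and~\ref{rmk:can-v-lt}).
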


\begin{proof}
The proof follows the arguments of \cite{Kaw2}.
By shrinking $X$ near $X_0$, we can assume that
$X$ is normal (cf. \cite[Corollary~5.12.7]{Gro}).
We may also assume that $X_0$ is affine.
Let $m>0$ be a sufficiently divisible integer and let
$\mu \colon Y\to X$ be a log resolution of
$(X ,X_0+\O_{X}(mK_{X}))$ which
restricts to a log resolution of
$(X_0,\O_{X_0}\big(mK_{X_0})\big)$.
Let $\{ s_1,\ldots , s_k\}$ be a generating set of sections of
$\O _{X_0}(mK_{X_0})$. Let $Y_0$ be the strict transform of $X_0$.
By Proposition~\ref{prop:char-*-canonical}, there is an inclusion
$$
\O_{X_0}(mK_{X_0})\.\O_{Y_0} \subseteq \O _{Y_0}(mK_{Y_0}).
$$
One sees that there are corresponding
sections $\tilde{s}_i$ of $\O_{Y_0}(mK_{Y_0})$ which push forward to the sections
$s_i$ of $\O _{X_0}(mK_{X_0})$. By Theorem~A of \cite{Kaw2},
after possibly restricting over a neighborhood of $0\in C$, these
sections extend to sections  $\tilde{S}_i$ of $\O _Y(m(K_Y+Y_0))$.
Pushing forward, we obtain sections $S_i$ of
$\O _{X}\big(m(K_{X}+X_0)\big)$
that restrict to $s_i$. It follows by Nakayama's Lemma that the $S_i$
are generators of
$\O _{X}\big(m(K_{X}+X_0)\big)$ at each point of $X_0$.
Thus the inclusion $\mu_*\O _Y\big(m(K_Y+Y_0)\big)\subseteq
\O _{X}\big(m(K_{X}+X_0)\big)$ is an equality in a neighborhood of $X_0$.
Therefore, after restricting to such neighborhood, there is an inclusion
$$
\O _{X}\big(m(K_{X}+X_0)\big)\cdot
\O _Y \subseteq \O _Y \big(m(K_Y+Y_0)\big),
$$
and hence $(X ,X_0)$ is canonical.
\end{proof}

Similarly, we have the following extension of
the invariance of plurigenera (in the general type case)
and of numerical Kodaira dimension
for varieties with canonical singularities
\cite{Siu,Kaw3,Nak}.

\begin{thm}\label{thm:inv-plurigenera-and-num-dim}
Let $f\colon X \to S$ be a projective flat morphism of varieties
whose fibers $X_t = f^{-1}(t)$ are normal varieties
with canonical singularities for every $t \in S$.
Then the following properties hold:
\begin{enumerate}
\item
The numerical Kodaira dimension $\n(X_t)$
is constant on $t \in S$. In particular,
if one fiber $X_0$ is of general type, then
so are the other fibers.
\item
Suppose additionally that the generic fiber $X_\eta$ is
a variety of general type. Then the plurigenera
$P_m(X_t)$ is constant on $t \in S$ for any positive integer $m$.
\end{enumerate}
\end{thm}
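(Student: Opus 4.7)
The plan is to imitate the strategy of the proof of Theorem~\ref{thm:invariance-can-sings}, upgrading that argument from lifting local generators of $\O_{X_0}(mK_{X_0})$ to lifting entire spaces of global pluricanonical sections. First I would carry out the standard reductions: passing to a general smooth affine curve through a chosen point, shrinking $S$, and using \cite[Corollary~5.12.7]{Gro} together with the fiberwise normality hypothesis to ensure $X$ itself is normal. We then work with $f\colon X\to C$ over a smooth affine curve $C$ with a marked point $0\in C$, so that $X_0=f^{-1}(0)$ is a Cartier divisor on $X$. By Theorem~\ref{thm:invariance-can-sings}, the total-space pair $(X,X_0)$ is canonical in a neighborhood of $X_0$, and consequently Proposition~\ref{prop:char-*-canonical} yields, for every sufficiently divisible $m$ and every log resolution $\mu\colon Y\to X$ of $(X,X_0+\O_X(mK_X))$ which restricts to a log resolution $\mu_0\colon Y_0\to X_0$, the key inclusion
$$
\O_X\bigl(m(K_X+X_0)\bigr)\.\O_Y \;\subseteq\; \O_Y\bigl(m(K_Y+Y_0)\bigr).
$$
This bypasses the absence of a $\Q$-Cartier $K_X$ by providing a concrete comparison between pluricanonical sheaves on $X$ (twisted by $mX_0$) and the ordinary canonical sheaves on the smooth resolution $Y$.

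For part~(b), given that the generic fiber is of general type, all fibers are of general type, and in particular $Y_0$ is a smooth projective variety of general type. I would then invoke the Siu--Kawamata extension theorem, in the precise form used in the proof of Theorem~\ref{thm:invariance-can-sings} via \cite[Theorem~A]{Kaw2}, to extend every section of $\O_{Y_0}(mK_{Y_0})$ to a section of $\O_Y(m(K_Y+Y_0))$ after possibly shrinking $C$ around $0$. Pushing these forward by $\mu$ and applying the inclusion above, one lifts a basis of $H^0(X_0,\O_{X_0}(mK_{X_0}))$ to sections of $\O_X(m(K_X+X_0))$. Because $X_0$ is a Cartier fiber of $f$, the invertible sheaf $\O_X(mX_0)$ is trivial on $X\setminus X_0$, so restricting these lifted sections to any nearby $X_t$ with $t\neq 0$ produces sections of $\O_{X_t}(mK_{X_t})$; combined with the upper semicontinuity of $h^0$ and the usual surjectivity/Nakayama argument from the proof of Theorem~\ref{thm:invariance-can-sings}, this yields $P_m(X_t)=P_m(X_0)$ for all sufficiently divisible $m$, and then for every positive integer $m$ by finite generation of the canonical ring of $X_0$.

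For part~(a), the approach is analogous, but twisted by a relatively ample Cartier divisor $A$ on $X$: extend sections of $\O_{Y_0}(mK_{Y_0}+A|_{Y_0})$, push forward via $\mu$, and compare the asymptotic growth rates of $h^0(X_t,mK_{X_t}+A|_{X_t})$ across fibers using Nakayama's characterization of numerical Kodaira dimension via these growth rates~\cite{Nak}. The principal obstacle throughout is precisely the lack of a pullback relation between $mK_X$ and $mK_Y$ when $K_X$ is not $\Q$-Cartier; this is exactly the obstruction dissolved by Proposition~\ref{prop:char-*-canonical}, after which the argument reduces to the smooth setting on the resolution $Y$, where the theorems of Siu, Kawamata, and Nakayama apply verbatim as in the $\Q$-Gorenstein case.
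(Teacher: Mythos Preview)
Your overall strategy matches the paper's: reduce to a curve, pass to a log resolution, use the sheaf inclusion from Proposition~\ref{prop:char-*-canonical} to transfer the problem to the smooth model $Y$, and then invoke the extension results of \cite{Kaw2} (and \cite{Nak} for part~(a)). The paper's own proof is a two-line reference to exactly these sources.

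There is one genuine gap. Your passage from ``$P_m(X_t)=P_m(X_0)$ for all sufficiently divisible $m$'' to ``for every positive integer $m$ by finite generation of the canonical ring of $X_0$'' does not work: knowing the plurigenera on a cofinal set of multiples says nothing about $P_1,P_2,\ldots$ individually, and finite generation does not recover the low-degree graded pieces from the high-degree ones. The fix is that the restriction to sufficiently divisible $m$ is unnecessary here. In the forward direction of Proposition~\ref{prop:char-*-canonical}, the displayed chain of inequalities uses only $f^\natural(-m(K_X+Z))\ge m\,f^*(-(K_X+Z))$ and the definition of canonical, and this holds for every $m$ with $mZ$ integral. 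Since $Z=X_0$ is an integral Cartier divisor, the inclusion $\O_X(m(K_X+X_0))\.\O_Y\subseteq\O_Y(m(K_Y+Y_0))$ is available for all $m\ge 1$, and the extension argument then runs for every $m$ directly.

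One further point you should address for part~(a): the paper singles out that Kodaira's lemma (\cite[Proposition~2.2.6]{Laz1}) remains valid for Weil divisors on a normal projective variety, and this is the only new observation needed beyond \cite{Kaw2}. In Kawamata's argument the bigness of $K_{X_t}$ enters via Kodaira's lemma, and since $K_{X_t}$ need not be $\Q$-Cartier here, you must verify (or at least state) that this step still goes through; your sketch via Nakayama's growth characterization glosses over this issue.
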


\begin{proof}
The proof is similar to those of Theorems~1.3 and~1.2'
of \cite{Kaw2}, after we remark that Kodaira's lemma
(cf. \cite[Proposition~2.2.6]{Laz1}) holds for
(not necessarily Cartier) divisors on a normal projective variety.
\end{proof}

\begin{rmk}\label{rmk:can-v-lt}
Canonical singularities on a $\Q$-Gorenstein normal variety
are obviously purely log terminal. However, it remains unclear
whether an analogous implication still holds
if the singularities are not $\Q$-Gorenstein (cf. Remark~\ref{rmk:K-}).
In fact, in this generality we do not even know if
canonical singularities are
rational (in particular, Cohen--Macaulay) or log canonical.
\end{rmk}

\providecommand{\bysame}{\leavevmode \hbox \o3em
{\hrulefill}\thinspace}


\begin{thebibliography}{AIM06}

\bibitem[AIM06]{AIM06}
Problems raised during the Workshop on
``Numerical invariants of singularities and
higher-dimensional algebraic varieties'',
American Institute of Mathematics, Palo Alto, August 2006.
Available at
{\tt www.aimath.org/WWN/singularvariety/problemsV1.pdf}

\bibitem[Bli]{Bli}
M. Blickle, Multiplier ideals and modules on toric varieties.
{\it Math. Z.} {\bf 248}  (2004), 113--121.

\bibitem[EM]{EM2}
L.~Ein and M.~Musta\c{t}\u{a},
Invariants of singularities of pairs.
International Congress of Mathematicians. Vol. II,  583--602,
Eur. Math. Soc., Zürich, 2006.

\bibitem[Elk]{Elk}
R. Elkik,
Rationalit\'e des singularit\'es canoniques.
{\it Invent. Math.} {\bf 64} (1981), 1--6.

\bibitem[EV]{EV1}
H. Esnault and E. Viehweg,
Dyson's lemma for polynomials in several variables (and the theorem of Roth).
{\it Invent. Math.} {\bf 78} (1984), 445--490.

%\bibitem[EV2]{EV}
%H. Esnault and E. Viehweg,
%{\it Lectures on Vanishing Theorems.}
%DMV Seminar, vol. 20. Birkh\"auser Verlag, Basel, 1992.

\bibitem[Fra]{Fra}
P. Francia, Some remarks on minimal models. I.
{\it Compositio Math.} {\bf 40} (1980), 301--313.

\bibitem[Gro]{Gro}
A. Grothendieck,
\'Elem\'ents de g\'eom\'etrie alg\'ebrique. IV. \'Etude locale des
sch\'emas et des morphismes de sch\'emas. II.
{\it Inst. Hautes \'Etudes Sci. Publ. Math.} (1965), no. 24, 231.

\bibitem[Har]{Har01}
N. Hara,
Geometric interpretation of tight closure and test ideals.
{\it Trans. Amer. Math. Soc.} {\bf 353}
(2001), 1885--1906.

\bibitem[HT]{HT}
N. Hara and S. Takagi,
On a generalization of test ideals.
{\it Nagoya Math. J.} {\bf 175} (2004), 59--74.

\bibitem[HY]{HY}
N. Hara and K.-I. Yoshida,
A generalization of tight closure and multiplier ideals.
{\it Trans. Amer. Math. Soc.} {\bf 355} (2003), 3143--3174.

\bibitem[Hir]{Hir}
H. Hironaka,
Resolution of singularities of an algebraic variety
over a field of characteristic zero. I, II.
{\it Ann. Math.} {\bf 79} (1964), 109--203, 205--326.

\bibitem[Kaw1]{Kaw}
Y. Kawamata,
Subadjunction of log canonical divisors. II.
{\it Amer. J. Math.} {\bf 120} (1998), 893--899.

\bibitem[Kaw2]{Kaw3}
Y. Kawamata,
Deformations of canonical singularities.
{\it J. Amer. Math. Soc.} {\bf 12} (1999), 85--92.

\bibitem[Kaw3]{Kaw2}
Y. Kawamata,
On the extension problem of pluricanonical forms.
Pragacz, Piotr (ed.) et al., Algebraic geometry: Hirzebruch 70.
Proceedings of the algebraic geometry conference in honor of
F.~Hirzebruch's 70th birthday,
Stefan Banach International Mathematical Center,
Warszawa, Poland, May 11-16, 1998. Providence,
RI: American Mathematical Society.
{\it Contemp. Math.} {\bf 241} (1999), 193--207.

\bibitem[Kaw4]{Kaw4}
Y. Kawamata,
On Fujita's freeness conjecture for $3$-folds and $4$-folds.
{\it Math. Ann.}  {\bf 308}  (1997),  no. 3, 491--505.

\bibitem[KMM]{KMM}
Y. Kawamata, K. Matsuda and K. Matsuki,
Introduction to the minimal model problem.
Algebraic Geometry, Sendai, 1985,  283--360,
{\it Adv. Stud. Pure Math.}, {\bf 10}, North-Holland, Amsterdam, 1987.

\bibitem[Kol1]{Ko0}
J.~Koll\'{a}r, Flips and abundance for algebraic threefolds.
A summer seminar at the University of Utah (Salk Lake City, 1991).
{\it Ast\'erisque} {\bf 211}, 1992.

\bibitem[Kol2]{Kol}
J. Koll\'ar,
Singularities of pairs.
Algebraic geometry---Santa Cruz 1995,  221--287,
{\it Proc. Sympos. Pure Math.}, {\bf 62}, Part 1,
Amer. Math. Soc., Providence, RI, 1997.

\bibitem[KM]{KM}
J. Koll\'ar and S. Mori,
{\it Birational Geometry of Algebraic Varieties.}
With the collaboration of C. H. Clemens and A. Corti.
Cambridge Tracts in Mathematics, 134.
Cambridge University Press, Cambridge, 1998.

\bibitem[KSS]{KSS}
S. Kov\'acs, K. Schwede and K. E. Smith,
Cohen-Macaulay semi-log canonical singularities are Du Bois.
Preprint, available as {\tt arXiv:0801.1541}.

\bibitem[Laz1]{Laz1}
R. Lazarsfeld,
{Positivity in Algebraic Geometry. I.
Classical Setting: Line Bundles and Linear Series.}
Ergebnisse der Mathematik und ihrer Grenzgebiete. 3.
Folge. A Series of Modern Surveys in Mathematics, 48.
Springer-Verlag, Berlin, 2004.

\bibitem[Laz2]{Laz}
R. Lazarsfeld,
{Positivity in Algebraic Geometry. II.
Positivity for Vector Bundles, and Multiplier Ideals.}
Ergebnisse der Mathematik und ihrer Grenzgebiete. 3.
Folge. A Series of Modern Surveys in Mathematics, 49.
Springer-Verlag, Berlin, 2004.

\bibitem[Nad1]{Nad1}
A. Nadel,
Multiplier ideal sheaves and existence of
K\"ahler-Einstein metrics of positive scalar curvature.
{\it Proc. Nat. Acad. Sci. U.S.A.} {\bf 86} (1989),  7299--7300.

\bibitem[Nad2]{Nad2}
A. Nadel,
Multiplier ideal sheaves and K\"ahler-Einstein metrics
of positive scalar curvature.
{\it Ann. of Math.} {\bf 132} (1990), 549--596.

\bibitem[Nak]{Nak}
N. Nakayama,
{\it Zariski-Decomposition and Abundance.}
MSJ Memoirs, 14. Mathematical Society of Japan, Tokyo, 2004.

\bibitem[Sho]{Sho}
V. V. Shokurov, Three-dimensional log perestroikas. (Russian)
With an appendix in English by Yujiro Kawamata.
{\it Izv. Ross. Akad. Nauk Ser. Mat.} {\bf 56}  (1992), 105--203;
translation in {\it Russian Acad. Sci. Izv. Math.} {\bf 40}  (1993), 95--202.

\bibitem[Sch]{Sch}
K. Schwede, A simple characterization of Du Bois singularities.
{\it Compos. Math.} {\bf 143}  (2007), 813--828.

\bibitem[Siu]{Siu}
Y.-T. Siu,
Invariance of plurigenera.
{\it Invent. Math.} {\bf 134}  (1998), 661--673.

\bibitem[Smi]{Smi00}
K. E. Smith,
The multiplier ideal is a universal test ideal.
{\it  Comm. Algebra} {\bf 28} (2000),
5915--5929, Special issue in honor of Robin Hartshorne.

\bibitem[Tkg]{Tak04}
S. Takagi,
An interpretation of multiplier ideals via tight closure.
{\it J. Algebraic Geom.} {\bf 13} (2004), 393--415.

\bibitem[Tak]{Tak}
S. Takayama, Pluricanonical systems on algebraic varieties of general type.
{\it Invent. Math.} {\bf 165}  (2006), 551--587.



\end{thebibliography}
\end{document}